\definecolor{red}{rgb}{1,0,0}
\definecolor{green}{rgb}{0,1,0}
\definecolor{blue}{rgb}{0,0,1}
\numberwithin{equation}{section}
\newtheorem{theorem}{Theorem}[section]
\newtheorem{proposition}[theorem]{Proposition}
\newtheorem{lemma}[theorem]{Lemma}
\newtheorem{corollary}[theorem]{Corollary}
\theoremstyle{definition}
\newtheorem{definition}[theorem]{Definition}
\newtheorem{example}[theorem]{Example}
\newtheorem{remark}[theorem]{Remark}
\DeclareMathOperator{\id}{id}
\newcommand{\cc}[1]{\overline{#1}}
\newcommand{\CC}{\mathbb{C}}
\newcommand{\RR}{\mathbb{R}}
\newcommand{\xto}{\xrightarrow}
\newcommand{\xfrom}{\xleftarrow}
\newcommand{\diese}{^{\sharp}}
\newcommand{\XX}{\mathfrak{X}}
\newcommand{\OO}{\Omega}
\definecolor{refkey}{gray}{.625}
\definecolor{labelkey}{gray}{.625}
\begin{document}

\title{From Hypercomplex to Holomorphic Symplectic Structures}
\author{Wei Hong}
\address{Department of Mathematics, Penn State University}
\email{hong{\textunderscore}w@math.psu.edu}
\author{Mathieu Sti\'enon}
\address{Department of Mathematics, Penn State University}
\email{stienon@math.psu.edu}
\maketitle

\begin{abstract} The notions of holomorphic symplectic structures and hypercomplex structures on Courant algebroids are introduced and then proved to be equivalent. These generalize hypercomplex triples and holomorphic symplectic 2-forms on manifolds respectively. Basic properties of such structures are established.
\end{abstract}

\tableofcontents

\section{Introduction}

This paper is an extension of~\cite{Stienon}.
Here, we make the case that, when seen in the framework of Courant algebroids,
hypercomplex structures and holomorphic symplectic structures are one and the same concept.

A \emph{hypercomplex manifold} is a smooth manifold $M$ endowed with three complex structures $i$, $j$, $k$
(regarded as endomorphisms of the tangent bundle of $M$) that satisfy the quaternionic relations $i^2=j^2=k^2=ijk=-1$.
A characteristic feature of hypercomplex manifolds discovered by Obata early on is the existence of a unique
torsion-free connection $\nabla$ that satisfies $\nabla i=\nabla j=\nabla k=0$ ~\cite{Obata}.
Hypercomplex manifolds have been the subject of much attention in the past.
Noteworthy are the constructions of left-invariant hypercomplex structures on compact Lie groups
and homogeneous spaces due to Spindel, Sevrin, Troos \& Van~Proeyen (in 1988) and also to Joyce (in 1992).
Moreover, important examples of hypercomplex manifolds arose in mathematical physics
in the form of hyper-K\"ahler manifolds.\footnote{Hyper-K\"ahler manifolds are hypercomplex manifolds $(M;i,j,k)$
endowed with a Riemannian metric $g$ with respect to which $i$, $j$, and $k$ are covariantly constant
and mutually orthogonal.}

A \emph{holomorphic symplectic manifold} is a complex manifold $(M;j)$ endowed
with a closed nondegenerate holomorphic 2-form $\omega$.
Hyper-K\"ahler manifolds, which carry three symplectic 2-forms each of which is holomorphic
with respect to one of the three complex structures, constitute again a special subclass.

The generalized complex geometry introduced in the last decade by Hitchin~\cite{Hitchin-gcg} and Gualtieri~\cite{Gualtieri}
provides the motivation for attempting to unify hypercomplex and holomorphic symplectic structures.
A generalized complex structure on a manifold $M$ is an endomorphism $J$ of the vector bundle $TM\oplus T^*M$,
orthogonal with respect to a natural symmetric pairing, and satisfying $J^2=-1$ and $\mathcal{N}(J,J)=0$,
where $\mathcal{N}$ denotes the Nijenhuis concomitant of a pair of endomorphisms of the Courant algebroid $TM\oplus T^*M$.
A generalized complex structure on a manifold $M$ can thus be seen as a complex structure
on the corresponding (standard) Courant algebroid $TM\oplus T^*M$.
Complex structures have been defined on arbitrary Courant algebroids in a similar fashion~\cite{L-W-X,Roytenberg}.

Three new concepts are introduced in the present paper. They generalize
hypercomplex manifolds, the Obata connection, and holomorphic symplectic 2-forms
to the realm of Courant algebroids:
\begin{enumerate}
\item A \emph{hypercomplex structure on a Courant algebroid $E$} is defined as a triple of complex structures $I$, $J$, $K$ on $E$
satisfying the quaternionic relations $I^2=J^2=K^2=IJK=-1$.
Hypercomplex manifolds, holomorphic symplectic 2-forms, and hyper-Poisson manifolds provide particular examples.
The notion of  hyper-Poisson structure, also introduced in this paper, can be seen as a degenerate analogue of hyper-K\"ahler structures.
\item The analogue of the Obata connection for a Courant algebroid $E$ endowed with a hypercomplex triple $(I,J,K)$
is called a \emph{hypercomplex connection}. Though a hypercomplex connection is not itself a connection in the usual sense, its restrictions to all Dirac subbundles of $E$ stable under $I$, $J$, $K$ are torsion-free (Lie algebroid) connections.
\item A \emph{holomorphic symplectic structure on a Courant algebroid $E$ relative to a complex structure $J$ on $E$}
is a section $\Omega$ of $\wedge^2 L_J$ such that $\Omega^\sharp\overline{\Omega}^\sharp=-\id_{L_J}$ (`nondegeneracy') and
$d_{L_J^*}\Omega=0$ (`closedness'). Here $L_J$ and $L_J^*$ denote the eigenbundles of $J$.
Given a complex manifold $(M;j)$, let $J= \left(\begin{smallmatrix} j & 0 \\ 0 & -j^* \end{smallmatrix}\right)$ be the
corresponding complex structure on the standard Courant algebroid $TM\oplus T^*M$.
The holomorphic symplectic structures on $(TM\oplus T^*M;J)$ are instances of
extended Poisson structures in the sense of~\cite{C-S-X}.
\end{enumerate}

We prove the following three theorems:
\begin{enumerate}
\item A Courant algebroid endowed with a hypercomplex structure admits a unique hypercomplex connection
(see Theorems~\ref{hypercomplex-connection} and~\ref{eqv}).
\item There exists a one-to-one correspondence between the hypercomplex structures
and the holomorphic symplectic structures on a Courant algebroid (see Theorem~\ref{hyper-holosym}).
\item Given a holomorphic symplectic structure $\Omega$ on a Courant algebroid $E$ relative to a complex structure $J$ on $E$
with eigenbundles $L_J$ and $L_J^*$, the restriction of the hypercomplex connection on $E$ to any Lie subalgebroid of $L_J^*$
maximal isotropic with respect to $\Omega$ is a flat torsion-free (Lie algebroid) connection (see Theorem~\ref{connecontr}).
\end{enumerate}

Finally, given a complex Lagrangian foliation of a complex manifold $(M;j)$ endowed with a holomorphic symplectic 2-form $\omega$,
we apply the third result above to the special case in which $E=TM\oplus T^*M$,
$J=\left(\begin{smallmatrix} j & 0 \\ 0 & -j^* \end{smallmatrix}\right)$, and $\Omega=\omega+\overline{\omega}^{-1}$,
and thereby recover a connection on the Lagrangian foliation, as discovered by Behrend \& Fantechi~\cite{Kai}.

\section{Complex structures on Courant algebroids}
A \emph{Courant algebroid} (see~\cite{L-W-X,Roytenberg}) consists of a vector bundle $\pi: E\rightarrow M$,
a nondegenerate symmetric pairing
$\langle,\rangle$ on the fibers of $\pi$, a bundle map $\rho: E\rightarrow TM$ called the anchor, and an
$\mathbb{R}$-bilinear operation $\circ$ on $\Gamma(E)$ called the Dorfman bracket, which, for all
$f\in C^{\infty}(M)$ and $x,y,z\in\Gamma(E)$, satisfy the relations
\begin{gather*}
x\circ(y\circ z)=(x\circ y)\circ z+y\circ(x\circ z),\\
\rho(x\circ y)=[\rho(x),\rho(y)],\\
x\circ fy=(\rho(x)f)y+f(x\circ y),\\
x\circ y+y\circ x=2\mathcal{D}\langle x,y \rangle, \\
\mathcal{D}f\circ x=0,\\
\rho(x)\langle y,z\rangle=\langle x\circ y,z\rangle+\langle y,x\circ z\rangle,
\end{gather*}
where $\mathcal{D}: C^{\infty}(M)\rightarrow \Gamma(E)$ is the $\mathbb{R}$-linear map defined by
$\langle \mathcal{D}f,x\rangle=\frac{1}{2}\rho(x)f$.

There is a canonical isomorphism $E\xto{\Psi}E^*$ given by $\Psi(e_{1}):e_2\mapsto\langle e_{1},e_{2}\rangle$
for any $e_{1}, e_{2}\in E$ and an induced isomorphism $\wedge^{k}E\xto{\Psi}\wedge^{k}E^*$.
Sometimes, we will implicitly identify $\wedge^{k}E$ and $\wedge^{k}E^*$ in this paper.


T.~Courant described the following \emph{standard example} in~\cite{Ted}.
Given a smooth manifold $M$, the vector bundle $TM\oplus T^{*}M\rightarrow M$ carries a natural Courant algebroid structure:
the anchor map is the projection onto the tangent component, whereas the pairing and the Dorfman bracket are respectively given by \begin{gather*} \langle X+\xi,Y+\eta\rangle=\frac{1}{2}(\xi(Y)+\eta(X)) \\ \text{and}\qquad
(X+\xi)\circ(Y+\eta)=[X,Y]+(L_{X}\eta-\iota_{Y}d\xi), \end{gather*} for all $X,Y\in\XX(M)$ and $\xi,\eta\in\Omega^{1}(M)$.

Let $(E,\rho,\langle,\rangle,\circ)$ be a Courant algebroid. Given two endomorphisms $F$ and $G$ of the vector bundle $E$,
their \emph{Nijenhuis concomitant} $\mathcal{N}(F,G):\Gamma( E)\otimes_{\RR} \Gamma(E)\rightarrow\Gamma(E)$
is defined as
\begin{align*}
\mathcal{N}(F,G)(U,V)=&FU\circ GV-F(U\circ GV)-G(FU\circ V)+FG(U\circ V)\\
&+GU\circ FV-G(U\circ FV)-F(GU\circ V)+GF(U\circ V),
\end{align*}
where $U,V\in\Gamma(E)$ (see~\cite{Stienon}). Obviously, $\mathcal{N}(F,G)=\mathcal{N}(G,F)$.
In addition, we define an $\RR$-trilinear map
$\mathcal{N}_{F,G}:\Gamma(E)\otimes_{\RR}\Gamma(E)\otimes_{\RR} \Gamma(E)\rightarrow C^{\infty}(M)$
by \[ \mathcal{N}_{F,G}(U,V,W)=\langle\mathcal{N}(F,G)(U,V),W\rangle .\]
\begin{remark}\label{tensor1}
It is easy to verify that $\mathcal{N}(F,G)(U,fV)=f\mathcal{N}(F,G)(U,V)$ for all $f\in C^{\infty}(M)$.
But usually $\mathcal{N}(F,G)(fU,V)\neq f\mathcal{N}(F,G)(U,V)$; therefore, $\mathcal{N}(F,G)$ is not necessarily a tensor.
\end{remark}

\begin{lemma}\label{Nijenhuis condition}
Let $(E,\rho,\langle,\rangle,\circ)$ be a Courant algebroid. If $F, G$ are two skew-symmetric
endomorphisms of the vector bundle $E$ such that
\begin{equation}\label{kosmanncond}
FG+GF=\lambda\id_{E}, \quad (\lambda\in\mathbb{R}),
\end{equation}
then $\mathcal{N}_{F,G}\in\Gamma(\wedge^{3}E^{*}).$
\end{lemma}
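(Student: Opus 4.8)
The plan is to verify the two defining properties of a section of $\wedge^3 E^*$ separately: $C^\infty(M)$-linearity in each of the three slots, and total antisymmetry. Two of the linearity properties come for free. Linearity in $W$ is immediate because $\mathcal{N}_{F,G}(U,V,W) = \langle \mathcal{N}(F,G)(U,V), W\rangle$ and the pairing is $C^\infty(M)$-linear in its second argument; linearity in $V$ is exactly the content of Remark~\ref{tensor1}. So it remains to establish total antisymmetry, after which linearity in $U$ will follow automatically by transporting the linearity in $V$ across a transposition.

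First I would show that $\mathcal{N}(F,G)$ is antisymmetric in its two arguments already at the level of $E$-valued maps, i.e.\ $\mathcal{N}(F,G)(U,V) = -\mathcal{N}(F,G)(V,U)$. Adding the two expansions and pairing off the bracket terms using the symmetric relation $x\circ y + y\circ x = 2\mathcal{D}\langle x,y\rangle$, every contribution is converted into $\mathcal{D}$ of a function. The terms carrying a prefactor $F$ collect into $-2F\mathcal{D}(\langle U,GV\rangle + \langle GU,V\rangle)$ and those carrying $G$ into $-2G\mathcal{D}(\langle FU,V\rangle + \langle U,FV\rangle)$, both of which vanish because $F$ and $G$ are skew-symmetric; the leading bracket terms give $2\mathcal{D}(\langle FU,GV\rangle + \langle GU,FV\rangle)$, while the $FG$- and $GF$-terms combine, via~\eqref{kosmanncond}, into $2\lambda\mathcal{D}\langle U,V\rangle$. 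Using skew-symmetry once more together with~\eqref{kosmanncond} one finds $\langle FU,GV\rangle + \langle GU,FV\rangle = -\lambda\langle U,V\rangle$, so the whole sum is $2\mathcal{D}(0) = 0$. This antisymmetry, combined with linearity in $V$, yields linearity in $U$ and hence full $C^\infty(M)$-linearity, as well as antisymmetry of $\mathcal{N}_{F,G}$ in its first two slots.

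It then suffices to prove antisymmetry in the last two slots, that is $\mathcal{N}_{F,G}(U,V,W) + \mathcal{N}_{F,G}(U,W,V) = 0$, since the transpositions $(U\,V)$ and $(V\,W)$ generate all of $S_3$. Here I would first move every $F$ and $G$ off the outer pairing using skew-symmetry, so that the two $(FG+GF)(U\circ V)$-type terms collapse to $\lambda\langle U\circ V,W\rangle$ and $\lambda\langle U\circ W,V\rangle$ by~\eqref{kosmanncond}, and the remaining terms become pairings of a Dorfman bracket against various combinations of $FV$, $GV$, $FW$, $GW$. Grouping these terms according to their left (bracket-acting) entry $FU$, $U$, or $GU$, and applying the compatibility axiom $\rho(x)\langle y,z\rangle = \langle x\circ y,z\rangle + \langle y,x\circ z\rangle$ to each group, turns each group into $\rho(x)$ applied to a sum of pairings. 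The $FU$- and $GU$-groups vanish outright because the relevant sums $\langle GV,W\rangle + \langle V,GW\rangle$ and $\langle FV,W\rangle + \langle V,FW\rangle$ are zero by skew-symmetry, whereas the $U$-group produces $-\lambda\,\rho(U)\langle V,W\rangle$; finally the two surviving $\lambda$-terms recombine, again by the compatibility axiom, into $+\lambda\,\rho(U)\langle V,W\rangle$, exactly cancelling the $U$-group.

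The main obstacle I anticipate is the bookkeeping in this last step: one has many pairings of non-tensorial, non-antisymmetric Dorfman brackets to organize, and the cancellation is delicate in that the anchor-derivative term produced by the Kosmann-type condition~\eqref{kosmanncond} must precisely annihilate the one produced by the compatibility axiom. Care is needed to keep track of which element plays the role of the derivation $x$ in each invocation of $\rho(x)\langle y,z\rangle = \langle x\circ y,z\rangle + \langle y,x\circ z\rangle$, and to use~\eqref{kosmanncond} and the skew-symmetry of $F$ and $G$ in tandem at the right moments. Once both antisymmetries and full linearity are in hand, $\mathcal{N}_{F,G}$ is a totally antisymmetric $C^\infty(M)$-multilinear map, hence a section of $\wedge^3 E^*$.
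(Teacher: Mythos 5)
Your proposal is correct and follows essentially the same route as the paper: establish $\mathcal{N}(F,G)(U,V)+\mathcal{N}(F,G)(V,U)=0$ by symmetrizing the Dorfman bracket into $\mathcal{D}$-terms and cancelling via skew-symmetry and $FG+GF=\lambda\id_E$, then prove antisymmetry in the last two slots. The paper dismisses that second antisymmetry as a "similar straightforward computation" and leaves the tensoriality implicit, whereas you spell both out (and your grouping by the bracket-acting entry $FU$, $U$, $GU$ does make the cancellation work exactly as you anticipate), so this is a faithful, slightly more detailed version of the paper's own argument.
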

\begin{proof}
For all $U,V,W\in\Gamma(E)$, we have
\begin{align*}
&\mathcal{N}(F,G)(U,V)+\mathcal{N}(F,G)(V,U)\\
=&2\mathcal{D}\langle FU,GV\rangle-2F\mathcal{D}\langle U,GV\rangle-2G\mathcal{D}\langle FU,V\rangle
+2FG\mathcal{D}\langle U,V\rangle\\
&+2\mathcal{D}\langle GU,FV\rangle-2G\mathcal{D}\langle U,FV\rangle-2F\mathcal{D}\langle GU,V\rangle
+2GF\mathcal{D}\langle U,V\rangle\\
=&-2\mathcal{D}\langle (FG+GF)U,V\rangle+2(FG+GF)\mathcal{D}\langle U,V\rangle\\
=&-2\mathcal{D}\langle \lambda U,V\rangle+2\lambda \mathcal{D}\langle U,V\rangle\\
=&0.
\end{align*}
Similarly, by a straightforward computation, we prove that
\[\langle\mathcal{N}(F,G)(U,V),W\rangle+\langle\mathcal{N}(F,G)(U,W),V\rangle=0.\]
This completes the proof.
\end{proof}
\begin{remark}
In the case $F=G$, Lemma~\ref{Nijenhuis condition} was proved by Kosmann-Schwarzbach in \cite{Kosmann-Schwarzbach}.
\end{remark}

\begin{lemma}\label{Poisson}
Let $(E,\rho,\langle,\rangle,\circ)$ be a Courant algebroid over a manifold $M$, $F$ be a skew-symmetric endomorphism
of the vector bundle $E$, and $\pi_{F}\in\Gamma(\wedge^{2}TM)$ be the bivector field defined by
\begin{equation}\label{piCourant}
\pi_{F}(df, dg)=\langle F\mathcal{D}f, \mathcal{D}g\rangle
\end{equation}
for all $f,g\in C^{\infty}(M)$.
Set $\{f, g\}=\pi_{F}(df,dg)=\langle F\mathcal{D}f,\mathcal{D}g\rangle$. Then
\[\{\{f,g\},h\}+\{\{g,h\},f\}+\{\{h,f\},g\}=-\frac{1}{4}\mathcal{N}_{F,F}(\mathcal{D}f,\mathcal{D}g,\mathcal{D}h) ,\]
which shows that $\pi_{F}$ is a Poisson bivector field on $M$ if $\mathcal{N}(F,F)=0$.
\end{lemma}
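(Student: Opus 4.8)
The plan is to compute the Jacobiator of the bracket $\{,\}$ directly from the Courant algebroid axioms and to match it against $-\tfrac14\mathcal{N}_{F,F}(\mathcal{D}f,\mathcal{D}g,\mathcal{D}h)$. First I would simplify $\mathcal{D}\{f,g\}$. Writing $\{f,g\}=\langle F\mathcal{D}f,\mathcal{D}g\rangle$ and applying the symmetrization axiom $x\circ y+y\circ x=2\mathcal{D}\langle x,y\rangle$ with $x=F\mathcal{D}f$ and $y=\mathcal{D}g$, the term $\mathcal{D}g\circ F\mathcal{D}f$ vanishes because $\mathcal{D}(\cdot)\circ x=0$ for every $x$. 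Hence $\mathcal{D}\{f,g\}=\tfrac12\,F\mathcal{D}f\circ\mathcal{D}g$. Substituting into $\{\{f,g\},h\}=\langle F\mathcal{D}\{f,g\},\mathcal{D}h\rangle$ and using skew-symmetry of $F$ to move one factor of $F$ onto $\mathcal{D}h$ gives $\{\{f,g\},h\}=-\tfrac12\langle F\mathcal{D}f\circ\mathcal{D}g,\,F\mathcal{D}h\rangle$.

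At this point it is convenient to introduce the vector field $X_f:=\rho(F\mathcal{D}f)$, so that $\{f,g\}=\tfrac12 X_f(g)$ by the defining property $\langle\mathcal{D}g,x\rangle=\tfrac12\rho(x)g$. Applying the invariance identity $\rho(x)\langle y,z\rangle=\langle x\circ y,z\rangle+\langle y,x\circ z\rangle$ (with $x=F\mathcal{D}f$, $y=\mathcal{D}g$, $z=F\mathcal{D}h$), together with $\rho(x\circ y)=[\rho(x),\rho(y)]$, collapses this to $\{\{f,g\},h\}=-\tfrac14\,X_h\big(X_f(g)\big)$. Summing cyclically then expresses the Jacobiator as $-\tfrac14\big(X_h X_f(g)+X_f X_g(h)+X_g X_h(f)\big)$.

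Next I would expand the right-hand side. Because $\mathcal{D}f\circ(\cdot)=0$, the eight-term expression $\mathcal{N}(F,F)(\mathcal{D}f,\mathcal{D}g)$ collapses to $2\big(F\mathcal{D}f\circ F\mathcal{D}g-F(F\mathcal{D}f\circ\mathcal{D}g)\big)$. Pairing with $\mathcal{D}h$ and reusing the same three identities turns $-\tfrac14\mathcal{N}_{F,F}(\mathcal{D}f,\mathcal{D}g,\mathcal{D}h)$ into $-\tfrac14\big(X_f X_g(h)-X_g X_f(h)+X_h X_f(g)\big)$. Comparing with the Jacobiator, the terms $X_h X_f(g)$ and $X_f X_g(h)$ match on the nose, and the whole identity reduces to the single relation $X_g X_h(f)=-X_g X_f(h)$.

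The one genuine subtlety — the main obstacle — is precisely this leftover: the naive expansions of the Jacobiator and of $\mathcal{N}_{F,F}$ do not agree term by term, and the discrepancy disappears only after invoking the skew-symmetry of the bracket. Concretely, skew-symmetry of $F$ and symmetry of the pairing give $\langle F\mathcal{D}f,\mathcal{D}h\rangle=-\langle F\mathcal{D}h,\mathcal{D}f\rangle$, i.e.\ $X_f(h)=-X_h(f)$; applying $X_g$ to this relation yields exactly $X_g X_f(h)=-X_g X_h(f)$, closing the gap. Throughout, the delicate bookkeeping is to keep straight which function is being differentiated and to use only the left-slot vanishing $\mathcal{D}f\circ x=0$ (there is no right-slot analogue); one must also recall from Remark~\ref{tensor1} that $\mathcal{N}(F,F)$ need not be tensorial in its first argument, which is harmless here since every slot is of the form $\mathcal{D}(\cdot)$. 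Finally, once $\mathcal{N}(F,F)=0$ forces the Jacobiator to vanish on all exact functions, tensoriality of the Schouten square $[\pi_F,\pi_F]$ and the fact that exact one-forms span $T^*M$ pointwise give $[\pi_F,\pi_F]=0$, so that $\pi_F$ is a Poisson bivector field.
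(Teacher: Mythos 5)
Your argument is correct and follows essentially the same route as the paper's proof: both hinge on the observations that $\mathcal{D}g\circ F\mathcal{D}f=0$ collapses $\mathcal{D}\{f,g\}$ to $\tfrac{1}{2}F\mathcal{D}f\circ\mathcal{D}g$ and kills half the terms of $\mathcal{N}(F,F)(\mathcal{D}f,\mathcal{D}g)$, and then on the invariance axiom together with $\rho(x\circ y)=[\rho(x),\rho(y)]$. The only cosmetic difference is that you push everything down to the vector fields $X_f=\rho(F\mathcal{D}f)$ and compare iterated derivatives of functions, whereas the paper keeps the computation at the level of pairings in $E$, where the leftover terms cancel directly without invoking the explicit identity $X_h(f)=-X_f(h)$.
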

\begin{proof}
For all $f,g,h\in C^{\infty}(M)$, we have
\begin{align*}
&\{\{f,g\},h\}=\langle F\mathcal{D}\langle F\mathcal{D}f,\mathcal{D}g\rangle,\mathcal{D}h\rangle\\
=&\langle\frac{1}{2}F(F\mathcal{D}f\circ \mathcal{D}g+\mathcal{D}g\circ F\mathcal{D}f),\mathcal{D}h\rangle
=\frac{1}{2}\langle F(F\mathcal{D}f\circ \mathcal{D}g),\mathcal{D}h\rangle;\\
&\\
&\{\{g,h\},f\}=\langle F\mathcal{D}\langle F\mathcal{D}g,\mathcal{D}h\rangle,\mathcal{D}f\rangle
=\langle \mathcal{D}\langle F\mathcal{D}g,\mathcal{D}h\rangle,-F\mathcal{D}f\rangle\\
=&-\frac{1}{2}\rho(F\mathcal{D}f)\langle F\mathcal{D}g,\mathcal{D}h\rangle
=-\frac{1}{2}\langle F\mathcal{D}f\circ F\mathcal{D}g,\mathcal{D}h\rangle
-\frac{1}{2}\langle F\mathcal{D}g, F\mathcal{D}f\circ \mathcal{D}h\rangle;\\
&\\
&\{\{h,f\},g\}=\langle F\mathcal{D}\langle F\mathcal{D}h,\mathcal{D}f\rangle,\mathcal{D}g\rangle
=\langle \mathcal{D}\langle \mathcal{D}h,F\mathcal{D}f\rangle,F\mathcal{D}g\rangle\\
=&\frac{1}{2}\langle \mathcal{D}h\circ F\mathcal{D}f+F\mathcal{D}f\circ \mathcal{D}h,F\mathcal{D}g\rangle
=\frac{1}{2}\langle F\mathcal{D}g, F\mathcal{D}f\circ \mathcal{D}h\rangle.
\end{align*}
Therefore,
\[\{\{f,g\},h\}+\{\{g,h\},f\}+\{\{h,f\},g\}=\frac{1}{2}\langle F(F\mathcal{D}f\circ\mathcal{D}g),\mathcal{D}h\rangle
-\frac{1}{2}\langle F\mathcal{D}f\circ F\mathcal{D}g,\mathcal{D}h\rangle.\]
On the other hand,
\begin{align*}
&\mathcal{N}_{F,F}(\mathcal{D}f,\mathcal{D}g,\mathcal{D}h)
=\langle\mathcal{N}(F,F)(\mathcal{D}f,\mathcal{D}g),\mathcal{D}h\rangle\\
=&\langle 2F\mathcal{D}f\circ F\mathcal{D}g-2F(F\mathcal{D}f\circ \mathcal{D}g)
-2F(\mathcal{D}f\circ F\mathcal{D}g)-2F^{2}(\mathcal{D}f\circ \mathcal{D}g) ,\mathcal{D}h\rangle\\
=&\langle 2F\mathcal{D}f\circ F\mathcal{D}g-2F(F\mathcal{D}f\circ \mathcal{D}g),\mathcal{D}h\rangle.
\end{align*}
This completes the proof.
\end{proof}

\begin{definition}
An \emph{almost complex structure on a Courant algebroid} $(E,\rho,\langle,\rangle,\circ)$
is an endomorphism $J$ of the vector bundle $E$ (i.e.\ a vector bundle map over $\id_{M}:M\rightarrow M$),
which is an orthogonal transformation with respect to the pairing $\langle,\rangle$ and satisfies $J^2=-1$.
\end{definition}

Let $J$ be an almost complex structure on a Courant algebroid $E$, and let $L_{J}$
(resp.\ $\cc{L_{J}}$) be the subbundle of $E_{\mathbb{C}}=E\otimes\mathbb{C}$
associated with the eigenvalue $i=\sqrt{-1}$ (resp.\ $-i=-\sqrt{-1}$) of $J$.
Then $L_{J}$ and $\cc{L}_{J}$ are maximal isotropic subbundles
of $E_{\mathbb{C}}$, such that
$E_{\mathbb{C}}=L_{J}\oplus\cc{L}_{J}$.

The nondegenerate symmetric pairing $\langle,\rangle$ identifies $\cc{L}_{J}$
to $L^{*}_{J}$ in a canonical way.
We will, therefore, use the symbols $\cc{L}_{J}$ and $L^{*}_{J}$ interchangeably in this paper.
\begin{definition}[\cite{Hitchin-gcg}]
A \emph{complex structure on a Courant algebroid} is an almost complex structure $J$
whose Nijenhuis concomitant $\mathcal{N}(J,J)$ vanishes---$J$ is said to be integrable.
\end{definition}

It is easy to see that an almost complex structure $J$ is integrable if and only if $L_{J}$ is involutive,
i.e.\ $\Gamma(L_{J})$ is closed under the Dorfman bracket.
The following result is an immediate consequence of Lemma~\ref{Poisson}.
\begin{corollary}[\cite{Gualtieri-2007,Barton&Stienon}]
If $J$ is a complex structure on a Courant algebroid over a smooth manifold $M$,
then $\pi_{J}$ is a Poisson bivector field on $M$.
\end{corollary}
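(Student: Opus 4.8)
The plan is to obtain this as a direct specialization of Lemma~\ref{Poisson} to the case $F=J$. That lemma is stated for an arbitrary \emph{skew-symmetric} endomorphism $F$ of $E$ and shows that the Jacobiator of the bracket $\{f,g\}=\pi_F(df,dg)=\langle F\mathcal{D}f,\mathcal{D}g\rangle$ is governed by the Nijenhuis concomitant, namely
\[\{\{f,g\},h\}+\{\{g,h\},f\}+\{\{h,f\},g\}=-\tfrac14\mathcal{N}_{F,F}(\mathcal{D}f,\mathcal{D}g,\mathcal{D}h).\]
Since $\pi_F$ is a bivector field, the bracket $\{,\}$ automatically satisfies the Leibniz rule in each argument, so the \emph{only} obstruction to $\pi_F$ being a Poisson bivector is the vanishing of this Jacobiator. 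Thus it suffices to verify that $J$ meets the two conditions needed to invoke the lemma with vanishing right-hand side: that $J$ is skew-symmetric, and that $\mathcal{N}(J,J)=0$.

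The second condition is immediate from the definitions: a complex structure on a Courant algebroid is by definition an almost complex structure whose Nijenhuis concomitant $\mathcal{N}(J,J)$ vanishes. The one point that genuinely requires an argument is the first---that an orthogonal endomorphism $J$ satisfying $J^2=-1$ is automatically skew-symmetric. I would argue as follows. Orthogonality with respect to the pairing means $\langle Jx,Jy\rangle=\langle x,y\rangle$ for all $x,y\in\Gamma(E)$. Replacing $y$ by $Jy$ and using $J^2=-1$ yields $\langle Jx,-y\rangle=\langle x,Jy\rangle$, that is, $\langle Jx,y\rangle=-\langle x,Jy\rangle$, which is precisely the skew-symmetry of $J$. (As a side observation, $J$ then also satisfies the relation \eqref{kosmanncond} of Lemma~\ref{Nijenhuis condition} with $\lambda=-2$, so $\mathcal{N}_{J,J}$ is in fact a genuine $3$-tensor; but this refinement is not needed for the present conclusion.)

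With both hypotheses checked, the final step is purely mechanical: Lemma~\ref{Poisson} applies with $F=J$, and because $\mathcal{N}(J,J)=0$ the right-hand side of the Jacobiator formula vanishes identically, so $\pi_J$ satisfies the Jacobi identity and is therefore a Poisson bivector field on $M$. I anticipate no real obstacle here—the whole statement is a corollary in the strict sense. The only non-routine ingredient is the elementary observation that orthogonality together with $J^2=-1$ forces skew-symmetry, which is what lets us feed $J$ into the preceding lemma; everything else is a direct citation.
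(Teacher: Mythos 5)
Your proposal is correct and follows exactly the paper's route: the paper presents this as an immediate consequence of Lemma~\ref{Poisson} applied with $F=J$, using $\mathcal{N}(J,J)=0$ from the definition of a complex structure. Your additional verification that orthogonality together with $J^2=-1$ forces skew-symmetry (so that the lemma's hypothesis is met) is a correct and worthwhile detail the paper leaves implicit.
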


It is known that the pair of eigenbundles $(L_{J},L^{*}_{J})$ of a complex structure $J$
on a Courant algebroid forms a Lie bialgebroid in the sense of Mackenzie \& Xu~\cite{M-X}.
The exterior differentials
\[ d_{L_{J}}: \Gamma(\wedge^{k}L^{*}_{J})\rightarrow\Gamma(\wedge^{k+1}L^{*}_{J})
\qquad \text{and} \qquad
d_{L^{*}_{J}}: \Gamma(\wedge^{k}L_{J})\rightarrow\Gamma(\wedge^{k+1}L_{J}) \]
respectively associated with the Lie algebroids $L_{J}$ and $L^{*}_{J}$
satisfy $d_{L_{J}}^{2}=0$ and $d_{L^{*}_{J}}^2=0$.

\begin{example}[\cite{Gualtieri}]\label{twistcomplex}
Let $j$ be an almost complex structure on $M$, and let $E_{\phi}=(TM\oplus T^{*}M)_{\phi}$
be the standard Courant algebroid twisted by a closed 3-form $\phi\in\Omega^{3}(M)$ (see~\cite{3-form}).
Then \[ J=\begin{pmatrix} j & 0\\ 0 & -j^{*} \end{pmatrix} \] is a  complex structure
on $E_{\phi}=(TM\oplus T^{*}M)_{\phi}$ if and only if $j$ is a complex structure on $M$, and
$\phi\in\Omega^{2,1}(M)\oplus\Omega^{1,2}(M)$,
where $\Omega^{p,q}(M)$ denotes the space of exterior differential forms of type $(p,q)$ relatively to the complex structure $j$.
These conditions hold when $M$ is a complex surface and $\phi$ is any closed 3-form on $M$.
\end{example}

\begin{lemma}[\cite{holomorphic-Lie-algebrods}]\label{holomorphicPoissonGC}
If $\pi_{1}$ is a bivector field on a smooth manifold $M$ and $j$ is an endomorphism of the tangent bundle of $M$, then
\[ J=\begin{pmatrix} j & \pi_{1} \\ 0 & -j^{*} \end{pmatrix} \] is a  complex structure on the standard Courant algebroid
$E=TM\oplus T^*M$
if and only if $j$ is a complex structure on $M$ and $\pi_{1}+\sqrt{-1}\pi_{2}$, with $\pi_{2}$ defined by
$\pi_{2}\diese=-j\pi_{1}\diese=-\pi_{1}\diese j^{*}$, is a holomorphic Poisson structure with respect to $j$.
\end{lemma}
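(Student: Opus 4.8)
The plan is to verify directly the three defining conditions for $J$ to be a complex structure on the standard Courant algebroid $E=TM\oplus T^*M$: that $J$ be orthogonal for $\langle,\rangle$, that $J^2=-\id$, and that $\mathcal{N}(J,J)=0$. Writing $\pi_1\diese:T^*M\to TM$ for the bundle map of the bivector field, the endomorphism acts by $J(X+\xi)=(jX+\pi_1\diese\xi)-j^*\xi$ for $X\in\XX(M)$ and $\xi\in\Omega^1(M)$.

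First I would dispose of the two algebraic conditions. Expanding $\langle J(X+\xi),Y+\eta\rangle$ and comparing it with $-\langle X+\xi,J(Y+\eta)\rangle$, the terms involving $j$ cancel automatically thanks to the block $-j^*$ in the lower-right corner, and the remaining terms cancel precisely because $\pi_1\diese$ is skew-symmetric; hence $J$ is orthogonal for every choice of $j$ and $\pi_1$. Squaring the matrix produces the diagonal pair $(j^2,(j^*)^2)$ together with the off-diagonal block $j\pi_1\diese-\pi_1\diese j^*$, so $J^2=-\id$ is equivalent to $j^2=-\id$ and $j\pi_1\diese=\pi_1\diese j^*$. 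This last identity is exactly what makes $-j\pi_1\diese$ and $-\pi_1\diese j^*$ agree, so that $\pi_2$ is well defined; a short check then confirms that $\pi_2\diese$ is again skew and satisfies $j\pi_2\diese=\pi_2\diese j^*$.

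For integrability the key simplification is that, once $J^2=-\id$ is established, orthogonality forces $J$ to be skew-symmetric, so Lemma~\ref{Nijenhuis condition} applies with $F=G=J$ and $\lambda=-2$ and guarantees $\mathcal{N}_{J,J}\in\Gamma(\wedge^3 E^*)$. In particular $\mathcal{N}(J,J)$ is a genuine tensor, and its vanishing may be tested separately on sections coming from $\XX(M)$ and from $\Omega^1(M)$. I would then examine the three resulting components. Feeding in $X,Y\in\XX(M)$, where $J(X+0)=jX$ and $X\circ Y=[X,Y]$, the purely tangent component collapses to $\mathcal{N}(J,J)(X,Y)=2N_j(X,Y)$, the Nijenhuis torsion of $j$, whose vanishing means exactly that $j$ is a complex structure on $M$. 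Feeding in $\xi,\eta\in\Omega^1(M)$, where the standard Dorfman bracket satisfies $\xi\circ\eta=0$, the purely cotangent component reduces to an expression in $\pi_1,\pi_2$ governed by the Schouten bracket; by Lemma~\ref{Poisson} (the induced bivector $\pi_J$ being a multiple of $\pi_1$) it vanishes iff $w=\pi_1+\sqrt{-1}\pi_2$ is Poisson. Finally, imposing $N_j=0$, the mixed component $\mathcal{N}(J,J)(X,\eta)$ rearranges into the Dolbeault operator $\bar\partial$ applied to $w$ as a section of $\wedge^2 T^{1,0}M$, so that its vanishing is precisely the holomorphicity of $w$.

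Assembling the three pieces, $\mathcal{N}(J,J)=0$ is equivalent to the simultaneous vanishing of $N_j$, of $\bar\partial w$, and of $[w,w]$ --- that is, to $j$ being integrable and $w=\pi_1+\sqrt{-1}\pi_2$ being a holomorphic Poisson bivector relative to $j$, which is the asserted equivalence. The hard part will be the bookkeeping in the mixed and cotangent components: one must expand the Lie derivatives $L_X\eta$ and the images under $\pi_1\diese-j^*$ carefully and then recognize the resulting real tensors as the real and imaginary parts of $\bar\partial w$ and of the holomorphic Schouten bracket $[w,w]$. The translation between the real data $(\pi_1,\pi_2,j)$ and the complex bivector $w\diese=(\id-\sqrt{-1}\,j)\pi_1\diese$, whose image lies in $T^{1,0}M$, is where essentially all the effort goes; the orthogonality, the relation $J^2=-\id$, and the tangent component are routine.
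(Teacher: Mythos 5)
The paper does not actually prove Lemma~\ref{holomorphicPoissonGC}: it is imported verbatim from \cite{holomorphic-Lie-algebrods}, so there is no in-paper argument to compare yours against. Your strategy --- verify the algebraic conditions by block computation, then use Lemma~\ref{Nijenhuis condition} to see that $\mathcal{N}_{J,J}$ is a $3$-form and test it separately on vector fields and $1$-forms --- is the standard and correct route, and the tangent--tangent component $\mathcal{N}(J,J)(X,Y)=2N_j(X,Y)$ and the reduction of $J^2=-\id$ to $j^2=-\id$ and $j\pi_1\diese=\pi_1\diese j^*$ are exactly right.

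Two points need repair. First, a small one: what is automatic for all $j,\pi_1$ is the \emph{skew-symmetry} $\langle Ja,b\rangle+\langle a,Jb\rangle=0$ (this is where skewness of $\pi_1\diese$ enters), not orthogonality; $\langle Ja,Jb\rangle=-\langle a,J^2b\rangle$ equals $\langle a,b\rangle$ only once $J^2=-\id$ is in force. Your argument still closes, but the sentence ``$J$ is orthogonal for every choice of $j$ and $\pi_1$'' is false as stated. Second, and more substantively: the purely cotangent component does \emph{not} by itself ``vanish iff $w=\pi_1+\sqrt{-1}\pi_2$ is Poisson.'' Via Lemma~\ref{Poisson} and tensoriality it vanishes iff $\llbracket\pi_1,\pi_1\rrbracket=0$, a single real condition, whereas $[w,w]=0$ unpacks to $\llbracket\pi_1,\pi_1\rrbracket=\llbracket\pi_2,\pi_2\rrbracket$ \emph{and} $\llbracket\pi_1,\pi_2\rrbracket=0$. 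The equivalence of $\llbracket\pi_1,\pi_1\rrbracket=0$ with $[w,w]=0$ holds only after the type condition $\pi_2\diese=-j\pi_1\diese$, the integrability of $j$, and the $\bar\partial$-closedness of $w$ extracted from the mixed components have been fed back in (this is precisely the content of Lemma~\ref{lemma-holomorphicPoisson} in the paper, quoted from the same reference). So the three components cannot be matched one-to-one with $N_j=0$, $\bar\partial w=0$, $[w,w]=0$ as your sketch suggests; the assembly step must argue the equivalence of the two \emph{systems} of conditions. Finally, the mixed-component identification with $\bar\partial w$ is asserted rather than carried out, and that computation is the actual mathematical content of the lemma; as a proposal the skeleton is viable, but as a proof it is not yet complete.
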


\section{Hypercomplex structures on Courant algebroids}

\subsection{Hypercomplex structure}

\begin{definition}
An \emph{almost hypercomplex structure on a Courant algebroid} $(E,\rho,\langle,\rangle,\circ)$
is a triple $(I,J,K)$ of almost complex structures on $E$ satisfying the quaternionic relations
\[ I^2=J^2=K^2=IJK=-1 .\]
\end{definition}

This definition has a few immediate consequences. From $J^2=-1$, it follows that $\frac{1}{2}(1-iJ)$
is the projection of $E\otimes\CC=L_{J}\oplus L_{J}^{*}$ onto $L_{J}$,
whereas $\frac{1}{2}(1+iJ)$ is the projection onto $L_{J}^{*}$.
Moreover, the endomorphisms $I$, $J$, and $K$ anticommute.
Therefore, both $I$ and $K$ swap the subbundles $L_{J}$ and $L^{*}_{J}$,
whereas $J$ preserves them. Finally, the relations
\begin{gather*}
\Big(\frac{1+I}{\sqrt{2}}\Big)\Big(\frac{1+iJ}{2}\Big)=\Big(\frac{1+iK}{2}\Big)\Big(\frac{1+I}{\sqrt{2}}\Big) \\
\Big(\frac{1-I}{\sqrt{2}}\Big)\Big(\frac{1+iK}{2}\Big)=\Big(\frac{1+iJ}{2}\Big)\Big(\frac{1-I}{\sqrt{2}}\Big)
\end{gather*}
imply the inclusions
\[ \Big(\frac{1+I}{\sqrt{2}}\Big) L^{*}_{J} \subset L^{*}_{K} \qquad \text{and} \qquad
\Big(\frac{1-I}{\sqrt{2}}\Big) L^{*}_{K} \subset L^{*}_{J} .\]
Since $\Big(\frac{1-I}{\sqrt{2}}\Big)\Big(\frac{1+I}{\sqrt{2}}\Big)=1$, we obtain a pair of inverse isomorphisms:
\[ L^{*}_{J}\xto{\frac{1+I}{\sqrt{2}}}L^{*}_{K} \qquad \text{and} \qquad
L^{*}_{J}\xfrom{\frac{1-I}{\sqrt{2}}}L^{*}_{K} .\]

\begin{remark}\label{tensor}
Given an almost hypercomplex structure $(I,J,K)$ on a Courant algebroid $E$,
it is easy to see that $\mathcal{N}(I,I)$, $\mathcal{N}(I,J)$, $\mathcal{N}(I,K)$,
$\mathcal{N}(J,J)$, $\mathcal{N}(J,K)$, and $\mathcal{N}(K,K)$ are $(2,1)$-tensors,
i.e.\ vector bundle maps $E\otimes E\rightarrow E$ over $\id_M$.
We can, therefore, regard $\mathcal{N}_{I,I}$, $\mathcal{N}_{I,J}$, $\mathcal{N}_{I,K}$,
$\mathcal{N}_{J,J}$, $\mathcal{N}_{J,K}$, and $\mathcal{N}_{K,K}$
as sections of $E^{*}\otimes E^{*}\otimes E^{*}$.
\end{remark}
Lemma~\ref{Nijenhuis condition} implies
\begin{lemma}\label{skew}
If $(I,J,K)$ is an almost hypercomplex structure on a Courant algebroid $E$, then
$\mathcal{N}_{I,I},\mathcal{N}_{I,J},\mathcal{N}_{I,K},\mathcal{N}_{J,J},
\mathcal{N}_{J,K},\mathcal{N}_{K,K}\in\Gamma(\wedge^{3}E^{*})$.
\end{lemma}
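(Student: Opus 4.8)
The plan is to reduce each of the six cases to a single application of Lemma~\ref{Nijenhuis condition}. That lemma requires the two endomorphisms involved to be skew-symmetric with respect to $\langle,\rangle$ and to satisfy the condition $FG+GF=\lambda\id_{E}$ for some real $\lambda$. So the entire proof amounts to checking these two hypotheses for every pair drawn (with repetition) from $\{I,J,K\}$; the substantive content then lives entirely in the lemma we are invoking.

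First I would record that each almost complex structure $F\in\{I,J,K\}$ is skew-symmetric. Orthogonality gives $\langle FU,FV\rangle=\langle U,V\rangle$ for all $U,V\in\Gamma(E)$; substituting $FV$ for $V$ and using $F^{2}=-\id_{E}$ yields $-\langle FU,V\rangle=\langle U,FV\rangle$, i.e.\ $\langle FU,V\rangle=-\langle U,FV\rangle$. Thus each of $I$, $J$, $K$ satisfies the skew-symmetry hypothesis of Lemma~\ref{Nijenhuis condition}.

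Next I would verify the condition $FG+GF=\lambda\id_{E}$ in each case. For the three diagonal pairs $(I,I)$, $(J,J)$, $(K,K)$, taking $F=G$ gives $FG+GF=2F^{2}=-2\id_{E}$, so the hypothesis holds with $\lambda=-2$. For the three mixed pairs $(I,J)$, $(J,K)$, $(K,I)$, the anticommutativity of $I$, $J$, $K$ already noted below the definition of an almost hypercomplex structure gives $FG+GF=0$, so the hypothesis holds with $\lambda=0$. Having confirmed both hypotheses for all six pairs, Lemma~\ref{Nijenhuis condition} applies verbatim and delivers $\mathcal{N}_{F,G}\in\Gamma(\wedge^{3}E^{*})$ in each case.

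There is essentially no obstacle here. The only inputs to supply are the one-line derivation of skew-symmetry from orthogonality together with $F^{2}=-\id_{E}$, and the bookkeeping of the constant $\lambda$ (namely $\lambda=-2$ on the diagonal and $\lambda=0$ off the diagonal). The anticommutation relations, which are the only mildly nontrivial algebraic ingredient, are a standard consequence of the quaternionic relations $I^{2}=J^{2}=K^{2}=IJK=-1$ and have already been established in the text, so nothing further needs to be proved.
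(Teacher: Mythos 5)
Your proposal is correct and is exactly the argument the paper intends: the text gives no separate proof, simply asserting that Lemma~\ref{Nijenhuis condition} implies Lemma~\ref{skew}, and your verification of the two hypotheses (skew-symmetry of $I,J,K$ from orthogonality plus $F^{2}=-\id_{E}$, and $FG+GF=\lambda\id_{E}$ with $\lambda=-2$ on the diagonal and $\lambda=0$ off the diagonal via anticommutativity) is precisely the omitted bookkeeping.
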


\begin{definition}
A \emph{hypercomplex structure on a Courant algebroid} is an almost hypercomplex structure $(I,J,K)$
such that the Nijenhuis tensors $\mathcal{N}(I,I)$, $\mathcal{N}(I,J)$, $\mathcal{N}(I,K)$,
$\mathcal{N}(J,J)$, $\mathcal{N}(J,K)$, and $\mathcal{N}(K,K)$ vanish.
\end{definition}

\begin{proposition}\label{S2}
If $(I,J,K)$ is a hypercomplex structure on a Courant algebroid $(E,\rho,\langle,\rangle,\circ)$,
then $\lambda_{1}I+\lambda_{2}J+\lambda_{3}K$ is a complex structure on $E$
for any $\lambda_{1},\lambda_{2},\lambda_{3}\in\RR$ with $\lambda_{1}^{2}+\lambda_{2}^{2}+\lambda_{3}^{2}=1$.
\end{proposition}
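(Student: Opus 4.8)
The plan is to check the two conditions defining a complex structure on $E$ for the endomorphism $L := \lambda_1 I + \lambda_2 J + \lambda_3 K$: first that $L$ is an almost complex structure, i.e.\ orthogonal with $L^2 = -\id_E$, and then that its Nijenhuis concomitant $\mathcal{N}(L,L)$ vanishes.

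For the almost complex part, I would begin by observing that each of $I$, $J$, $K$, being orthogonal and squaring to $-\id_E$, is skew-symmetric with respect to $\langle,\rangle$; indeed, replacing $v$ by $Jv$ in $\langle Ju, Jv\rangle = \langle u,v\rangle$ and using $J^2 = -\id_E$ yields $\langle Ju, v\rangle = -\langle u, Jv\rangle$, and likewise for $I$ and $K$. Hence $L$ is skew-symmetric as well. Expanding $L^2$ and invoking $I^2 = J^2 = K^2 = -\id_E$ together with the anticommutativity of $I$, $J$, $K$ already recorded above, the three cross terms $\lambda_1\lambda_2(IJ+JI)$, $\lambda_2\lambda_3(JK+KJ)$, $\lambda_1\lambda_3(IK+KI)$ all vanish, leaving $L^2 = -(\lambda_1^2 + \lambda_2^2 + \lambda_3^2)\id_E = -\id_E$. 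Finally, skew-symmetry together with $L^2 = -\id_E$ gives $\langle Lu, Lv\rangle = -\langle u, L^2 v\rangle = \langle u, v\rangle$, so $L$ is orthogonal, completing this part.

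The integrability $\mathcal{N}(L,L) = 0$ is where the full strength of the hypothesis enters, and the decisive observation is that the Nijenhuis concomitant is $\RR$-bilinear in the pair of endomorphisms $(F,G)$. Although $\mathcal{N}(F,G)$ fails to be $C^\infty(M)$-linear in its vector argument $U$ (Remark~\ref{tensor1}), inspection of the defining formula shows that each of its eight summands is $\RR$-linear in $F$ when $G$ is held fixed, and by the symmetry $\mathcal{N}(F,G) = \mathcal{N}(G,F)$ it is $\RR$-linear in $G$ as well. Writing $(F_1,F_2,F_3) = (I,J,K)$, bilinearity therefore gives
\[
\mathcal{N}(L,L) = \sum_{a,b=1}^{3} \lambda_a \lambda_b\, \mathcal{N}(F_a, F_b),
\]
which, after collecting terms via $\mathcal{N}(F_a,F_b) = \mathcal{N}(F_b,F_a)$, is a $\lambda$-weighted sum of precisely the six concomitants $\mathcal{N}(I,I)$, $\mathcal{N}(J,J)$, $\mathcal{N}(K,K)$, $\mathcal{N}(I,J)$, $\mathcal{N}(J,K)$, $\mathcal{N}(I,K)$. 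Each of these vanishes by the definition of a hypercomplex structure, whence $\mathcal{N}(L,L) = 0$.

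I do not expect a genuine obstacle here: the entire content of the proposition has been front-loaded into the definition of a hypercomplex structure, which was deliberately arranged to demand the vanishing of all six Nijenhuis concomitants---including the three \emph{mixed} ones $\mathcal{N}(I,J)$, $\mathcal{N}(J,K)$, $\mathcal{N}(I,K)$---exactly so that the above expansion collapses. The only step requiring a moment's attention is the verification of $\RR$-bilinearity directly from the formula; it is worth emphasizing that it is these mixed concomitants, and not merely $\mathcal{N}(I,I)$, $\mathcal{N}(J,J)$, $\mathcal{N}(K,K)$, that the cross terms $2\lambda_a\lambda_b\,\mathcal{N}(F_a,F_b)$ with $a\neq b$ require, which is precisely why the definition includes them.
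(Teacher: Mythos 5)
Your proof is correct. The paper in fact states Proposition~\ref{S2} without any proof, so there is nothing to compare against; your argument --- skew-symmetry plus anticommutativity of $I,J,K$ for the almost complex part, and the $\RR$-bilinearity of $\mathcal{N}(\cdot,\cdot)$ combined with the vanishing of all six concomitants (including the mixed ones, which is exactly why the definition demands them) for integrability --- is the natural one the authors evidently intend, and it fills the gap cleanly.
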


\begin{corollary}
If $\pi_{I},\pi_{J},\pi_{K}$ are the bivector fields associated with a hypercomplex structure $(I,J,K)$ on a Courant algebroid
as in Lemma~\ref{Poisson}, then
\[\llbracket\pi_{\alpha},\pi_{\beta}\rrbracket=0,\quad \forall\alpha,\beta\in\{I,J,K\}.\]
\end{corollary}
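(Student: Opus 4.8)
The plan is to deduce the vanishing of the mixed brackets from the fact, recorded in the earlier Corollary (the immediate consequence of Lemma~\ref{Poisson}, asserting that the bivector field attached to a complex structure is Poisson), that each of $\pi_I$, $\pi_J$, $\pi_K$ is individually a Poisson bivector field, combined with Proposition~\ref{S2} and a polarization argument.

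First I would record that the bivector field $\pi_F$ depends $\RR$-linearly on the skew-symmetric endomorphism $F$. This is immediate from the defining formula $\pi_F(df,dg)=\langle F\mathcal{D}f,\mathcal{D}g\rangle$, which is linear in $F$ because both $\mathcal{D}$ and the pairing are linear. In particular, for $\lambda_1,\lambda_2,\lambda_3\in\RR$ and $F=\lambda_1 I+\lambda_2 J+\lambda_3 K$, one has
\[ \pi_F=\lambda_1\pi_I+\lambda_2\pi_J+\lambda_3\pi_K. \]
Note that $F$ is skew-symmetric, being a real linear combination of the skew-symmetric endomorphisms $I$, $J$, $K$ (orthogonality together with $J^2=-1$ forces skew-symmetry), so that Lemma~\ref{Poisson} does apply to it.

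Next, whenever $\lambda_1^2+\lambda_2^2+\lambda_3^2=1$, Proposition~\ref{S2} guarantees that $F$ is a complex structure on $E$; hence $\mathcal{N}(F,F)=0$, and the earlier Corollary shows that $\pi_F$ is Poisson, i.e.\ $\llbracket\pi_F,\pi_F\rrbracket=0$. Expanding this using bilinearity together with the symmetry $\llbracket P,Q\rrbracket=\llbracket Q,P\rrbracket$ of the Schouten bracket on bivector fields, and using that the diagonal terms $\llbracket\pi_I,\pi_I\rrbracket$, $\llbracket\pi_J,\pi_J\rrbracket$, $\llbracket\pi_K,\pi_K\rrbracket$ already vanish (each of $I,J,K$ being a complex structure), I obtain
\[ 0=2\lambda_1\lambda_2\llbracket\pi_I,\pi_J\rrbracket+2\lambda_2\lambda_3\llbracket\pi_J,\pi_K\rrbracket+2\lambda_1\lambda_3\llbracket\pi_I,\pi_K\rrbracket. \]

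Finally I would extract the three mixed brackets by specializing the coefficients. Taking $\lambda_1=\lambda_2=\tfrac{1}{\sqrt{2}}$, $\lambda_3=0$ kills the last two terms and yields $\llbracket\pi_I,\pi_J\rrbracket=0$; the choices $(\lambda_2,\lambda_3)=(\tfrac{1}{\sqrt2},\tfrac1{\sqrt2})$ and $(\lambda_1,\lambda_3)=(\tfrac1{\sqrt2},\tfrac1{\sqrt2})$ give the remaining two. Together with the vanishing diagonal brackets, this proves $\llbracket\pi_\alpha,\pi_\beta\rrbracket=0$ for all $\alpha,\beta\in\{I,J,K\}$. I do not anticipate a serious obstacle: the only two points requiring care are the $\RR$-linear dependence of $\pi_F$ on $F$ and the symmetry of the Schouten bracket on bivectors, both routine; everything else is simply a polarization of the already-established single-structure statement.
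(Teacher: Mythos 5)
Your proposal is correct and follows essentially the same route as the paper: both rely on Proposition~\ref{S2} to see that $\lambda_1 I+\lambda_2 J+\lambda_3 K$ is a complex structure, on the linearity of $F\mapsto\pi_F$ to identify $\pi_{\lambda_1 I+\lambda_2 J+\lambda_3 K}$ with $\lambda_1\pi_I+\lambda_2\pi_J+\lambda_3\pi_K$, and then on polarization of $\llbracket\pi_F,\pi_F\rrbracket=0$. You have merely written out explicitly the polarization step that the paper compresses into ``from which the corollary immediately follows.''
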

\begin{proof}
For any $\lambda_{1},\lambda_{2},\lambda_{3}\in\RR$ satisfying $\lambda_{1}^{2}+ \lambda_{2}^{2}+ \lambda_{3}^{2}=1$, $\lambda_{1}\pi_{I}+\lambda_{2}\pi_{J}+\lambda_{3}\pi_{K}$
is the Poisson structure associated with the complex structure $\lambda_{1}I+\lambda_{2}J+\lambda_{3}K$ on $E$, from which the corollary immediately follows.
\end{proof}

\begin{example}
The quaternion algebra $\mathbb{H}$ can be regarded as a Courant algebroid over the one point space
with the commutator as bracket and $\{1,i,j,k\}$ as an orthonormal basis. If $I, J$, and $K$ denote
the multiplication by $i,j$, and $k$ from the left respectively, then $(I,J,K)$ is a hypercomplex structure
on the Courant algebroid $(\mathbb{H}, \langle,\rangle, [ , ])$.
\end{example}

\begin{example}\label{exampleHyercomplex}
Let $i,j,$ and $k$ be almost complex structures on a smooth manifold $M$. Then the triple
\[ I=\begin{pmatrix} i & 0 \\ 0 & -i^{*} \end{pmatrix}, \qquad
J=\begin{pmatrix} j & 0 \\ 0 & -j^{*} \end{pmatrix}, \qquad
K=\begin{pmatrix} k & 0 \\ 0 & -k^{*} \end{pmatrix} \]
is a hypercomplex structure on $TM\oplus T^{*}M$ if and only if the triple
$i,j,k$ is hypercomplex in the classical sense (see~\cite{Obata} or~\cite{Yano-Ako}).
\end{example}

\begin{example}\label{exampleHoloSymp}
Let $j$ be a complex structure on a smooth manifold $M$,
and let $\omega_{1}$ and $\omega_{2}$ be two nondegenerate 2-forms on $M$. The triple
\[ I=\begin{pmatrix} 0 & \omega_{1}^{-1} \\ -\omega_{1} & 0 \end{pmatrix}, \qquad
J=\begin{pmatrix} j & 0 \\ 0 & -j^{*} \end{pmatrix}, \qquad
K=\begin{pmatrix} 0 & \omega_{2}^{-1} \\ -\omega_{2} & 0 \end{pmatrix} \]
is a hypercomplex structure on $TM\oplus T^{*}M$ if and only if
$\omega_{1}-\sqrt{-1}\omega_{2}$ is a holomorphic symplectic 2-form on $M$. We will discuss this case in more detail in Example \ref{holomorphicSymplectic}.
\end{example}

\begin{example}
Let $(i,j,k)$ be a hypercomplex structure on a four-dimensional manifold $M$,
and let $\phi$ be a closed 3-form on $M$. Then
\[ I=\begin{pmatrix} i & 0 \\ 0 & -i^{*} \end{pmatrix}, \qquad
J=\begin{pmatrix} j & 0 \\ 0 & -j^{*} \end{pmatrix}, \qquad
K=\begin{pmatrix} k & 0 \\ 0 & -k^{*} \end{pmatrix} \]
is a hypercomplex structure on the twisted Courant algebroid $E_{\phi}=(TM\oplus T^{*}M)_{\phi}$.
\end{example}

\subsection{Hypercomplex connection}
Let us recall a classical result pertaining to hypercomplex manifolds.
\begin{theorem}[Obata connection~\cite{Obata,Yano-Ako}]
\begin{enumerate}
\item Let $M$ be a manifold endowed with a hypercomplex structure $(i, j, k)$.
There exists a unique torsion-free connection $\nabla$ on $M$ such that $$\nabla i=\nabla j=\nabla k=0 ,$$
which is given by the expression
\[\nabla_{X}Y=-\frac{1}{2}k([jY,iX]-j[Y,iX]-i[jY,X]+ji[Y,X]), \quad \forall X,Y\in\XX(M) .\]
\item Conversely, given an almost hypercomplex structure $(i, j, k)$ on $M$,
if there exists a torsion-free connection $\nabla$ on $M$ such that $\nabla i=\nabla j=\nabla k=0$,
then $(i, j, k)$ must be a hypercomplex structure on $M$.
\end{enumerate}
\end{theorem}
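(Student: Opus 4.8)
The plan is to treat the three assertions separately, deriving the explicit formula as a byproduct of the uniqueness argument. For uniqueness (and to see where the stated expression comes from), I would assume $\nabla$ is a torsion-free connection with $\nabla i=\nabla j=\nabla k=0$ and simply rewrite the four brackets on the right-hand side of the formula. Using torsion-freeness to replace each bracket $[U,V]$ by $\nabla_U V-\nabla_V U$, and using $\nabla i=\nabla j=0$ to pull $i$ and $j$ out of the second slot of $\nabla$, one finds
\[ [jY,iX]-j[Y,iX]-i[jY,X]+ji[Y,X]=(ij-ji)\nabla_X Y=2k\,\nabla_X Y, \]
where the quaternionic relations $ij=-ji=k$ are used and all terms involving $\nabla_{iX}Y$, $\nabla_{jY}X$, and $\nabla_Y X$ cancel in pairs. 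Applying $-\frac{1}{2}k$ and using $k^2=-1$ returns exactly $\nabla_X Y$. Thus any connection with the required properties must be given by the displayed formula, which establishes uniqueness and explains the formula at once.

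For existence I would turn this around: \emph{define} $\nabla$ by the stated formula and check that it is a genuine torsion-free connection compatible with the three complex structures. Tensoriality in $X$ and the Leibniz rule in $Y$ must be verified by expanding the brackets (the non-tensorial correction terms are designed to cancel), after which torsion-freeness and the identities $\nabla i=0$, $\nabla j=0$ are checked directly. Since $k=ij$ and $\nabla_X(ijY)=i\nabla_X(jY)=ij\nabla_X Y$ once $\nabla i=\nabla j=0$, the condition $\nabla k=0$ is then automatic, so it suffices to handle $i$ and $j$. I expect this verification to be the main obstacle: it is the only place where the hypothesis that $(i,j,k)$ is hypercomplex---rather than merely almost hypercomplex---is used, and concretely it is the vanishing of the Nijenhuis tensors of $i$ and $j$ (together with the quaternionic relations) that forces the computed torsion and the expressions $\nabla i$, $\nabla j$ to vanish. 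By Part~2 this appeal to integrability is unavoidable.

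The converse (Part~2) is the shortest step. Given an almost hypercomplex triple and a torsion-free $\nabla$ with $\nabla a=0$ for $a\in\{i,j,k\}$, I would substitute $[X,Y]=\nabla_X Y-\nabla_Y X$ into the Nijenhuis tensor
\[ N_a(X,Y)=[aX,aY]-a[aX,Y]-a[X,aY]-[X,Y] \]
and pull each $a$ through $\nabla$ using $\nabla a=0$. Every resulting term cancels in pairs (using $a^2=-1$), so $N_a=0$ for each of $i$, $j$, $k$. Hence all three almost complex structures are integrable, and together with the quaternionic relations this means that $(i,j,k)$ is a hypercomplex structure, as claimed.
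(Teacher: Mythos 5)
Your plan is essentially correct, but note first that the paper itself offers no proof of this classical statement: it is quoted with citations to Obata and Yano--Ako, and what the paper actually proves is the Courant-algebroid generalization (Theorems~\ref{hypercomplex-connection} and~\ref{eqv}), of which the Obata theorem is the special case $E=TM\oplus T^*M$, $\mathcal{F}=TM$ (cf.\ Example~\ref{ExampleHypercomplexFoliation}). Measured against that generalized proof, your uniqueness argument takes a genuinely different and arguably cleaner route: you substitute $[U,V]=\nabla_UV-\nabla_VU$ into the four brackets and collapse everything to $(ij-ji)\nabla_XY=2k\nabla_XY$ (I checked: the $\nabla_{jY}X$, $\nabla_{iX}Y$, $\nabla_YX$ terms do cancel in pairs), which proves uniqueness and \emph{derives} the formula in one stroke; the paper instead proves uniqueness of its generalized connection by a polarization argument on the difference $\Xi=\nabla^1-\nabla^2$, using $K\Xi(U,U)=-K\Xi(U,U)$, which never exhibits the formula. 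Your converse (Part~2) is also correct and is the standard computation. The one soft spot is that the existence half --- verifying that the displayed formula defines a connection, is torsion-free, and parallelizes $i$ and $j$ --- is only announced, not carried out; this is where all the real content sits, and it is exactly the analogue of steps (1) and (2) of the paper's proof of Theorem~\ref{hypercomplex-connection}, where $\mathcal{N}(I,I)=0$ is used to get $\nabla I=0$ and $\mathcal{N}(I,J)=0$ to kill the extra term $-\tfrac{1}{2}K\mathcal{N}(I,J)(U,V)$ in the torsion. You correctly locate where integrability enters, so the plan is sound, but a complete write-up would have to perform that verification (the check that the formula is tensorial in $X$ and Leibniz in $Y$ uses only the almost hypercomplex relations; torsion-freeness and $\nabla i=\nabla j=0$ use the vanishing of the Nijenhuis tensors).
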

We will generalize this result to hypercomplex structures on Courant algebroids.
Let $(I, J, K)$ be an almost hypercomplex structure on a Courant algebroid $E$.
For all $f\in C^{\infty}(M)$ and $U,V\in\Gamma(E)$, set
\begin{equation}\label{deltaf}
\triangle_{f}(U,V)=\langle U,V\rangle \mathcal{D}f+\langle IU,V\rangle I\mathcal{D}f
+\langle JU,V\rangle J\mathcal{D}f+\langle KU,V\rangle K\mathcal{D}f.
\end{equation}
It is simple to check that
\begin{gather*} \triangle_{f}(U,IV)=I\triangle_{f}(U,V), \\
\triangle_{f}(U,JV)=J\triangle_{f}(U,V), \\
\triangle_{f}(U,KV)=K\triangle_{f}(U,V), \\
\intertext{and}
\triangle_{f}(U,V)+\triangle_{f}(V,U)=2\langle U,V\rangle \mathcal{D}f
.\end{gather*}

\begin{definition}
A \emph{hypercomplex connection} on a Courant algebroid $(E,\rho,\langle,\rangle,\circ)$
endowed with an almost hypercomplex structure $(I,J,K)$ is an $\mathbb{R}$-bilinear map
\[ \Gamma(E)\otimes\Gamma(E)\to\Gamma(E), \qquad (U,V)\mapsto \nabla_{U}V \]
such that
\begin{gather}
\nabla_{fU}V=f\nabla_{U}V \label{fnabla} \\
\intertext{and} \label{nablaf}
\nabla_{U}(fV)=(\rho(U)f)V+f(\nabla_{U}V)-\triangle_{f}(U,V)
,\end{gather}
for all $f\in C^{\infty}(M)$ and $U,V\in\Gamma(E)$.
Its torsion is given by
\begin{equation}\label{deftorsion}
T^{\nabla}(U,V)=\nabla_{U}V-\nabla_{V}U-\llbracket U,V\rrbracket
\end{equation}
and its curvature by
\begin{equation}\label{defcurvature}
R^{\nabla}(U,V)W=\nabla_{U}\nabla_{V}W-\nabla_{V}\nabla_{U}W-\nabla_{\llbracket U,V\rrbracket}W
\end{equation}
for all $U,V,W\in\Gamma(E)$, where $\llbracket U,V\rrbracket=\frac{1}{2}(U\circ V-V\circ U)$.
\end{definition}

\begin{theorem} [\cite{Stienon}]\label{hypercomplex-connection}
Let $(I,J,K)$ be a hypercomplex structure on a Courant algebroid $(E,\rho,\langle,\rangle,\circ)$.
There exists a unique hypercomplex connection $\nabla$ that satisfies
\begin{equation}\label{nablaI}
\nabla I=\nabla J=\nabla K=0,
\end{equation}
and
\begin{equation}\label{torsion}
T^{\nabla}(U,V)=I\mathcal{D}\langle U,IV\rangle+J\mathcal{D}\langle U,JV\rangle
+K\mathcal{D}\langle U,KV\rangle,\quad\forall U,V\in\Gamma(E)
.\end{equation}
It is given by
\begin{equation}\label{connection}
\nabla_{U}V=-\frac{1}{2}K(JV\circ IU-J(V\circ IU)-I(JV\circ U)+JI(V\circ U)),\quad\forall U,V\in\Gamma(E)
.\end{equation}
\end{theorem}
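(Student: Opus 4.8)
The plan is to prove existence and uniqueness separately, reducing both to the Dorfman-bracket axioms together with the quaternionic relations $I^{2}=J^{2}=K^{2}=-1$, $IJ=K$, $JK=I$, $KI=J$ and the skew-symmetry $\langle\alpha U,V\rangle=-\langle U,\alpha V\rangle$ for $\alpha\in\{I,J,K\}$ coming from orthogonality.

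For existence I would take \eqref{connection} as the definition of $\nabla$ and verify the four required properties in turn. First, to obtain \eqref{fnabla} and \eqref{nablaf} I would substitute $fU$ and $fV$ into the four Dorfman brackets appearing in \eqref{connection}; the derivation terms produced by the rule $x\circ fy=(\rho(x)f)y+f(x\circ y)$, together with its left-slot counterpart deduced from $x\circ y+y\circ x=2\mathcal{D}\langle x,y\rangle$, collect after applying $-\tfrac{1}{2}K$ into precisely $(\rho(U)f)V$ and $-\triangle_{f}(U,V)$. This is the computation that the definition \eqref{deltaf} of $\triangle_{f}$ is designed to absorb.

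The core of existence is \eqref{nablaI}. For each $\alpha\in\{I,J,K\}$ I would compute $\nabla_{U}(\alpha V)-\alpha\nabla_{U}V$ by inserting $\alpha V$ into \eqref{connection}, simplifying the quaternionic products (for instance $JKV=IV$ and $J^{2}V=-V$), and comparing with $\alpha\nabla_{U}V$. The outcome is a fixed $\RR$-linear combination of the Nijenhuis concomitants $\mathcal{N}(I,I),\mathcal{N}(I,J),\dots,\mathcal{N}(K,K)$ evaluated at $(U,V)$, all of which vanish because $(I,J,K)$ is a genuine (not merely almost) hypercomplex structure; this yields \eqref{nablaI}. I regard this step as the main obstacle, since it demands careful bookkeeping of the quaternion algebra and is exactly where integrability is used. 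I would then finish existence by forming $T^{\nabla}(U,V)=\nabla_{U}V-\nabla_{V}U-\llbracket U,V\rrbracket$ from \eqref{connection} and checking, by a direct computation from the bracket axioms, that it equals the right-hand side of \eqref{torsion}, the symmetric parts $\mathcal{D}\langle x,y\rangle$ of the Dorfman brackets producing exactly the $\mathcal{D}\langle U,\alpha V\rangle$ terms.

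For uniqueness, suppose $\nabla$ is any hypercomplex connection satisfying \eqref{nablaI} and \eqref{torsion}. The key is the identity $x\circ y=\nabla_{x}y-\nabla_{y}x-T^{\nabla}(x,y)+\mathcal{D}\langle x,y\rangle$, which follows from $x\circ y=\llbracket x,y\rrbracket+\mathcal{D}\langle x,y\rangle$ and \eqref{deftorsion}. Substituting this into each of the four brackets of \eqref{connection} and using $\nabla_{U}(\alpha V)=\alpha\nabla_{U}V$, all the cross terms in $\nabla$ cancel in pairs, and the surviving connection contribution is $(IJ-JI)\nabla_{U}V=2K\nabla_{U}V$; applying $-\tfrac{1}{2}K$ returns exactly $\nabla_{U}V$. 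It then remains to check that the accompanying torsion and $\mathcal{D}$ terms sum to zero, which I would verify by inserting the explicit torsion \eqref{torsion} and using the skew-symmetry and orthogonality of $I,J,K$ together with $\mathcal{D}(fg)=f\mathcal{D}g+g\mathcal{D}f$. Consequently every such $\nabla$ must coincide with the right-hand side of \eqref{connection}, so at most one hypercomplex connection satisfies \eqref{nablaI} and \eqref{torsion}, completing the argument.
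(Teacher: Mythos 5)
Your proposal is correct. The existence half follows the paper's route: take \eqref{connection} as the definition, check the Leibniz-type identities \eqref{fnabla}--\eqref{nablaf} from the Dorfman axioms, and observe that the obstructions to $\nabla I=\nabla J=\nabla K=0$ and to \eqref{torsion} are governed by the Nijenhuis tensors (the paper makes this precise by showing $\nabla_U$ automatically preserves the $J$-eigenbundles, so $\nabla J=0$ needs no integrability, while $I\nabla_U Y-\nabla_U IY=\tfrac{1+iJ}{4}I\mathcal{N}(I,I)(Y,U)$ and $T^{\nabla}(U,V)-\tfrac{1}{2}K\mathcal{N}(I,J)(U,V)=\text{RHS of \eqref{torsion}}$; note that the torsion identity also consumes integrability, via $\mathcal{N}(I,J)=0$, a point worth flagging in your write-up). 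Your uniqueness argument, however, is genuinely different from the paper's. The paper sets $\Xi=\nabla^{1}-\nabla^{2}$ for two candidate connections, notes that $\Xi$ is symmetric (same torsion) and commutes with $I,J,K$ in its second slot, and then the quaternionic relations force $K\Xi(U,U)=-K\Xi(U,U)$, hence $\Xi=0$ by polarization. You instead substitute $x\circ y=\nabla_{x}y-\nabla_{y}x-T^{\nabla}(x,y)+\mathcal{D}\langle x,y\rangle$ into the four brackets of \eqref{connection}; the connection terms indeed collapse to $(IJ-JI)\nabla_{U}V=2K\nabla_{U}V$, and a (somewhat lengthy but routine) check using skew-adjointness of $I,J,K$ shows the residual torsion and $\mathcal{D}$ terms cancel, so any admissible $\nabla$ equals the formula. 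Your route has the merit of re-deriving \eqref{connection} rather than merely separating two candidates, and it is essentially the same manipulation the paper later uses to prove that existence of such a connection forces $\mathcal{N}(I,J)=0$; the paper's difference-tensor argument is shorter and, notably, never uses the explicit right-hand side of \eqref{torsion}---only that the two connections share the same torsion---so it isolates more cleanly what uniqueness actually depends on.
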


The main result of this section is the following theorem.
\begin{theorem}[\cite{Stienon}]\label{eqv}
Let $(I,J,K)$ be an almost hypercomplex structure on a Courant algebroid
$(E,\rho,\langle,\rangle,\circ)$.
The following assertions are equivalent:
\begin{enumerate}
\item $\mathcal{N}(I,J)=0$;
\item $\mathcal{N}(I,I)=\mathcal{N}(J,J)=0$;
\item The triple $(I,J,K)$ is a hypercomplex structure: all six Nijenhuis tensors
$\mathcal{N}(I,I)$, $\mathcal{N}(I,J)$, $\mathcal{N}(I,K)$,
$\mathcal{N}(J,J)$, $\mathcal{N}(J,K)$, and $\mathcal{N}(K,K)$ vanish.
\item There exists a unique hypercomplex connection $\nabla$ that satisfies
\begin{equation}\label{nablaI}
\nabla I=\nabla J=\nabla K=0,
\end{equation}
and, for all $U,V\in\Gamma(E)$,
\begin{equation}\label{torsion}
T^{\nabla}(U,V)=I\mathcal{D}\langle U,IV\rangle+J\mathcal{D}\langle U,JV\rangle+K\mathcal{D}\langle U,KV\rangle.
\end{equation}
\end{enumerate}
\end{theorem}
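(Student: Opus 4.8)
The plan is to establish the cycle of implications $(3)\Rightarrow(1)$, $(3)\Rightarrow(2)$, $(1)\Rightarrow(3)$, $(2)\Rightarrow(3)$ together with $(3)\Leftrightarrow(4)$, after which all four assertions are equivalent. The implications $(3)\Rightarrow(1)$ and $(3)\Rightarrow(2)$ are immediate, since (3) is the vanishing of all six Nijenhuis tensors, in particular of $\mathcal{N}(I,J)$ and of $\mathcal{N}(I,I),\mathcal{N}(J,J)$. The real content is thus that each partial hypothesis (1) or (2) already forces all six tensors to vanish, and that (3) matches the existence-and-uniqueness statement (4).

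First I would record that the Nijenhuis concomitant is $\RR$-bilinear and symmetric in its two endomorphism arguments: writing $T_{F}=\tfrac{1}{2}\mathcal{N}(F,F)$ for the Nijenhuis torsion of a single endomorphism, a direct expansion of the eight-term formula yields the polarization identity $\mathcal{N}(F,G)=T_{F+G}-T_{F}-T_{G}$. Hence for all $\lambda_{1},\lambda_{2},\lambda_{3}\in\RR$ one has $\tfrac{1}{2}\mathcal{N}(\lambda_{1}I+\lambda_{2}J+\lambda_{3}K,\lambda_{1}I+\lambda_{2}J+\lambda_{3}K)=\sum_{\alpha,\beta}\lambda_{\alpha}\lambda_{\beta}\mathcal{N}(\alpha,\beta)$, so the torsions of all the endomorphisms $\lambda_{1}I+\lambda_{2}J+\lambda_{3}K$ considered in Proposition~\ref{S2} are encoded in the six tensors $\mathcal{N}(I,I),\mathcal{N}(I,J),\mathcal{N}(I,K),\mathcal{N}(J,J),\mathcal{N}(J,K),\mathcal{N}(K,K)$. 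By Lemma~\ref{skew} all six lie in $\Gamma(\wedge^{3}E^{*})$, and by Remark~\ref{tensor} they are genuine $(2,1)$-tensors; this is crucial, because it legitimizes substituting $I$- and $J$-twisted sections into their arguments without producing spurious first-order terms of $\mathcal{D}f$ type.

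The central step---and the one I expect to be the main obstacle---is to derive a family of pointwise tensor identities relating the six tensors through the single relation $K=IJ$ and the anticommutation of $I,J,K$. Combining the polarization identity, the quaternionic relations, and the tensoriality noted above, one should be able to express each $K$-involving tensor $\mathcal{N}(I,K),\mathcal{N}(J,K),\mathcal{N}(K,K)$ as a fixed $E$-linear combination of $\mathcal{N}(I,I),\mathcal{N}(I,J),\mathcal{N}(J,J)$ evaluated on twisted arguments, and, reciprocally, $\mathcal{N}(I,I)$ and $\mathcal{N}(J,J)$ in terms of $\mathcal{N}(I,J)$ and conversely. These reciprocal relations are exactly what make (1) and (2) equivalent to the full vanishing (3): granting them, $\mathcal{N}(I,J)=0$ propagates to $\mathcal{N}(I,I)=\mathcal{N}(J,J)=0$ and thence to the three $K$-tensors, while $\mathcal{N}(I,I)=\mathcal{N}(J,J)=0$ propagates to $\mathcal{N}(I,J)=0$ and again to the remaining three, yielding $(1)\Rightarrow(3)$ and $(2)\Rightarrow(3)$. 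The delicate point is that the concomitant is not tensorial for a general pair of endomorphisms (Remark~\ref{tensor1}), so the manipulations must be kept within the class covered by Lemma~\ref{Nijenhuis condition}; this is what guarantees that the twisted expressions remain sections of $\wedge^{3}E^{*}$ and that no anomalous $\mathcal{D}f$-terms survive. Tracking the signs produced by $IJ=-JI=K$ (and cyclically) through the eight-term defining formula for $\mathcal{N}$ is the bookkeeping-heavy heart of the argument.

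Finally, the equivalence of (3) with (4) is handled by Theorem~\ref{hypercomplex-connection}, which under (3) provides the unique hypercomplex connection $\nabla$ satisfying $\nabla I=\nabla J=\nabla K=0$ and the torsion formula~\eqref{torsion}, explicitly given by~\eqref{connection}; this is $(3)\Rightarrow(4)$. For $(4)\Rightarrow(3)$ I would mimic the second half of the classical Obata theorem. Using $U\circ V=\llbracket U,V\rrbracket+\mathcal{D}\langle U,V\rangle$ together with $\llbracket U,V\rrbracket=\nabla_{U}V-\nabla_{V}U-T^{\nabla}(U,V)$ from~\eqref{deftorsion}, one rewrites each Dorfman bracket occurring in the definition of $\mathcal{N}(\alpha,\beta)$ in terms of $\nabla$, $T^{\nabla}$, and $\mathcal{D}$. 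The parallelism $\nabla I=\nabla J=\nabla K=0$ makes the $\nabla$-contributions cancel, leaving an expression purely in the prescribed torsion $T^{\nabla}$; substituting the explicit form~\eqref{torsion} and invoking the quaternionic relations then shows that this expression vanishes, establishing (3). The modified Leibniz rule~\eqref{nablaf} must be tracked when commuting $\nabla$ past functions, but since each $\mathcal{N}(\alpha,\beta)$ is tensorial these corrections cannot contribute to the final identity, so once the reciprocal identities of the central step are in hand the remaining deductions are routine.
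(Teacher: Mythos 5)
Your logical architecture is fine (the cycle $(3)\Rightarrow(1),(2)$, $(1)\Rightarrow(3)$, $(2)\Rightarrow(3)$, $(3)\Leftrightarrow(4)$ would indeed establish the theorem), your polarization identity $\mathcal{N}(F,G)=T_{F+G}-T_F-T_G$ is correct, and your plans for $(3)\Rightarrow(4)$ (cite Theorem~\ref{hypercomplex-connection}) and $(4)\Rightarrow(3)$ (rewrite each Dorfman bracket via $\nabla$, $T^{\nabla}$, $\mathcal{D}$ and use $\nabla I=\nabla J=\nabla K=0$) match what the paper does. But the step you yourself flag as ``the main obstacle'' --- the reciprocal identities driving $(1)\Rightarrow(3)$ and $(2)\Rightarrow(3)$ --- is only asserted, not derived, and the route you propose for deriving it (polarization plus the quaternionic relations plus tensoriality) is not sufficient. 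The Nijenhuis concomitant involves the Dorfman bracket essentially, so no amount of formal algebra on the endomorphisms $I,J,K$ alone produces relations among the six tensors; the actual arguments must use the eigenbundle decomposition $E_{\CC}=L_J\oplus L_J^*$ and the fact that $I$ swaps the summands.

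Concretely, the paper proves $(1)\Rightarrow(2)$ by evaluating, for $X,Y\in\Gamma(L_J)$, $\mathcal{N}(I,J)(X,Y)=(i-J)(IX\circ Y+X\circ IY)-2iI(X\circ Y)$ and extracting from its vanishing that $X\circ Y=-\tfrac{1-iJ}{2}I(IX\circ Y+X\circ IY)\in\Gamma(L_J)$, i.e.\ $L_J$ is involutive, which is the integrability of $J$. For $(2)\Rightarrow(3)$ the key move is to write $\tfrac{1}{2}\mathcal{N}(I,I)(X,Y)=P_1(X,Y)-P_2(X,Y)$ where $P_1$ takes values in $L_J^*$ and $P_2$ in $L_J$ (these memberships already use the involutivity of both eigenbundles, i.e.\ hypothesis $\mathcal{N}(J,J)=0$); the direct-sum decomposition then forces $P_1$ and $P_2$ to vanish \emph{separately}, after which $\mathcal{N}(K,K)(X,Y)=-2P_1-2P_2$ and $\mathcal{N}(I,J)(X,Y)=-2iIP_2$ vanish, with analogous component computations on $L_J^*\times L_J^*$ and $L_J\times L_J^*$. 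This ``separate vanishing via the splitting'' is strictly stronger information than any $E$-linear combination identity among the six tensors, so your proposed family of reciprocal linear relations cannot, by itself, deliver $(2)\Rightarrow(3)$. To close the gap you need to carry out the eigenbundle computations explicitly rather than appeal to bookkeeping with $IJ=-JI=K$.
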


As an application of Theorem~\ref{eqv} for Example~\ref{exampleHoloSymp}, we have the following corollary.
\begin{corollary}
Let $\omega_{1}$ and $\omega_{2}$ be two nondegenerate forms on a manifold $M$
endowed with an almost complex structure $j$ such that $\omega_{2}\diese=j^{*}\omega_{1}\diese=\omega_{1}\diese j$.
Then any one of the following assertions is a consequence of the other two:
\begin{itemize}
\item[(1)] $d\omega_{1}=0$,
\item[(2)] $d\omega_{2}=0$,
\item[(3)] $j$ is integrable.
\end{itemize}
\end{corollary}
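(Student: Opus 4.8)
The plan is to recognize $(I,J,K)$ as the almost hypercomplex structure of Example~\ref{exampleHoloSymp} and then to read off the three assertions from the vanishing of the three diagonal Nijenhuis tensors, applying the equivalence of conditions~(2) and~(3) in Theorem~\ref{eqv}. First I would verify that the hypotheses $\omega_2\diese=j^*\omega_1\diese=\omega_1\diese j$ are exactly what makes $(I,J,K)$ an \emph{almost} hypercomplex structure on $TM\oplus T^*M$. Because $\omega_1$ and $\omega_2$ are nondegenerate $2$-forms and $j^2=-1$, the relations $I^2=J^2=K^2=-1$ and the orthogonality of $I$, $J$, $K$ hold automatically; a short block-matrix computation then shows that $IJK=-1$ is equivalent to the pair of identities $\omega_2\diese=j^*\omega_1\diese$ and $\omega_2\diese=\omega_1\diese j$, i.e.\ to the stated hypotheses. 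Hence $(I,J,K)$ is an almost hypercomplex structure and Theorem~\ref{eqv} is applicable.

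Next I would build the dictionary between the diagonal Nijenhuis tensors and the three assertions. Since $J=\left(\begin{smallmatrix} j & 0 \\ 0 & -j^*\end{smallmatrix}\right)$, Example~\ref{twistcomplex} with $\phi=0$ gives $\mathcal{N}(J,J)=0$ if and only if $j$ is integrable, which is assertion~(3). For $I=\left(\begin{smallmatrix} 0 & \omega_1^{-1} \\ -\omega_1 & 0\end{smallmatrix}\right)$, a generalized complex structure of symplectic type on the untwisted standard Courant algebroid, $\mathcal{N}(I,I)=0$ holds if and only if $\omega_1$ is symplectic, that is $d\omega_1=0$, which is assertion~(1); the identical computation for $K=\left(\begin{smallmatrix} 0 & \omega_2^{-1} \\ -\omega_2 & 0\end{smallmatrix}\right)$ yields $\mathcal{N}(K,K)=0$ if and only if $d\omega_2=0$, which is assertion~(2).

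Finally I would invoke the equivalence (2)$\Leftrightarrow$(3) of Theorem~\ref{eqv}, which says that for an almost hypercomplex structure the vanishing of the two diagonal Nijenhuis tensors attached to the distinguished pair forces all six Nijenhuis tensors to vanish. Applied to the triple $(I,J,K)$ itself this gives: if $d\omega_1=0$ and $j$ is integrable then $\mathcal{N}(K,K)=0$, i.e.\ conditions~(1) and~(3) imply~(2). To produce the remaining two implications I would apply Theorem~\ref{eqv} to the cyclically relabelled triples $(J,K,I)$ and $(K,I,J)$, each of which is again an almost hypercomplex structure since the quaternionic relation $IJK=-1$ is invariant under cyclic permutation; these yield that~(2) and~(3) imply~(1), and that~(1) and~(2) imply~(3), respectively, which is the statement of the corollary.

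The logical heart of the argument is a direct appeal to Theorem~\ref{eqv}, so the only genuine work lies in the translation step of the second paragraph. I expect the correspondence $\mathcal{N}(I,I)=0\Leftrightarrow d\omega_1=0$ (and its analogue for $K$) to be the main obstacle: it requires expanding the Nijenhuis concomitant of a symplectic-type endomorphism of $TM\oplus T^*M$ and matching the result against $d\omega_1=0$. The analogous statement for $J$ is already recorded in Example~\ref{twistcomplex}, so no extra computation is needed there.
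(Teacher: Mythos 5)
Your proposal is correct and follows exactly the route the paper intends: it presents the corollary as an application of Theorem~\ref{eqv} to the almost hypercomplex triple of Example~\ref{exampleHoloSymp}, translating $\mathcal{N}(I,I)=0$, $\mathcal{N}(J,J)=0$, $\mathcal{N}(K,K)=0$ into $d\omega_1=0$, integrability of $j$, and $d\omega_2=0$ respectively, and then using the equivalence of assertions (2) and (3) of that theorem together with cyclic permutation of the triple. The paper leaves these details implicit, and your filling-in of them (the block-matrix verification of $IJK=-1$ and the standard integrability criterion for symplectic-type generalized complex structures) is exactly what is needed.
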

From this corollary, we immediately obtain the following standard result as given by Hitchin:
\begin{lemma}[\cite{Hitchin-minimal-surface}]
Let $g$ be a Riemannian metric on a smooth manifold with skew-adjoint endomorphisms $i,j,$ and $k$
of the tangent bundle satisfying the quaternionic conditions.
Then $g$ is hyper-K\"{a}hler if and only if the corresponding 2-forms $\omega_{1},\omega_{2},\omega_{3}$ are closed.
(The 2-forms $\omega_{1},\omega_{2},\omega_{3}$ are related to the endomorphisms $i,j,k$ by
$\omega_{1}^{\sharp}=g^{\sharp}\circ i$, $\omega_{2}^{\sharp}=g^{\sharp}\circ j$,
$\omega_{3}^{\sharp}=g^{\sharp}\circ k$.)
\end{lemma}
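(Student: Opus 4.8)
The plan is to reduce the statement to the preceding Corollary by first recording the precise algebraic relations satisfied by the three fundamental $2$-forms. Writing the defining conditions as $\omega_1(X,Y)=g(iX,Y)$, $\omega_2(X,Y)=g(jX,Y)$, and $\omega_3(X,Y)=g(kX,Y)$, the skew-adjointness of $i,j,k$ guarantees that each $\omega_a$ is a genuine (skew) $2$-form and that $g$ is Hermitian for each of $i,j,k$, since $g(\alpha X,\alpha Y)=g(X,Y)$ for $\alpha\in\{i,j,k\}$. Using the quaternionic relations in the form $ij=k$, $jk=i$, $ki=j$ (so that $ji=-k$, $kj=-i$, $ik=-j$) together with skew-adjointness, I would then verify the three identities
\[ \omega_3^\sharp=j^*\omega_1^\sharp=\omega_1^\sharp j, \qquad -\omega_3^\sharp=i^*\omega_2^\sharp=\omega_2^\sharp i, \qquad \omega_1^\sharp=k^*\omega_2^\sharp=\omega_2^\sharp k. \]
Each of these is exactly the hypothesis of the Corollary, with the almost complex structure taken to be $j$, $i$, and $k$ respectively, and the two nondegenerate forms being the appropriate pair among $\omega_1,\omega_2,\omega_3$.

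For the substantive implication, suppose $\omega_1,\omega_2,\omega_3$ are all closed. Feeding the first identity into the Corollary (complex structure $j$, forms $\omega_1$ and $\omega_3$) shows that $d\omega_1=0$ and $d\omega_3=0$ force $j$ to be integrable; the second identity (complex structure $i$, forms $\omega_2$ and $-\omega_3$) yields the integrability of $i$, and the third (complex structure $k$, forms $\omega_2$ and $\omega_1$) yields the integrability of $k$. Hence $(i,j,k)$ is a hypercomplex structure on $M$. Each of the three almost Hermitian triples $(g,i,\omega_1)$, $(g,j,\omega_2)$, $(g,k,\omega_3)$ then has integrable complex structure and closed fundamental form, so the classical characterization of the K\"ahler condition gives $\nabla i=\nabla j=\nabla k=0$ for the Levi-Civita connection $\nabla$. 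Together with the orthogonality already encoded in skew-adjointness, this says exactly that $g$ is \emph{hyper-K\"ahler}.

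The converse does not use the Corollary: if $g$ is hyper-K\"ahler then $i,j,k$ are $\nabla$-parallel and $\nabla g=0$, so each of $\omega_1,\omega_2,\omega_3$ is $\nabla$-parallel, being built from the parallel tensors $g$ and $i,j,k$; because $\nabla$ is torsion-free, a parallel form is closed, whence $d\omega_1=d\omega_2=d\omega_3=0$. The main obstacle is conceptual rather than computational: the Corollary only delivers integrability of $i,j,k$, whereas the definition of hyper-K\"ahler additionally requires the complex structures to be covariantly constant for the Levi-Civita connection. Closing this gap is the one point where I must leave the Courant-algebroid framework and invoke the classical equivalence ``integrable plus closed fundamental form $\Longleftrightarrow$ K\"ahler $\Longleftrightarrow$ $\nabla\alpha=0$''. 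The displayed identities themselves are routine but sign-sensitive, so the only place demanding care is the bookkeeping of the quaternion relations when checking the symmetry $\omega_a^\sharp\alpha=\alpha^*\omega_a^\sharp$ that the Corollary requires.
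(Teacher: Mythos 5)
Your proposal is correct and follows exactly the route the paper intends: the paper offers no written proof beyond the remark that the lemma follows ``immediately'' from the preceding Corollary, and your three applications of that Corollary (with the sign-checked identities $\omega_3^\sharp=j^*\omega_1^\sharp=\omega_1^\sharp j$, etc.) are precisely the reduction being invoked. Your explicit appeal to the classical equivalence ``integrable $+$ closed fundamental form $\Leftrightarrow$ $\nabla\alpha=0$'' is the right way to close the gap between the integrability delivered by the Corollary and the covariant constancy required by the definition of hyper-K\"ahler, a step the paper leaves implicit.
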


The isotropic, involutive subbundles of a Courant algebroid are necessarily Lie algebroids. Those of maximal rank are called \emph{Dirac structures}.

Let $L$ be a Lie algebroid with anchor $\rho$ and  let $V$ be a vector bundle both over the same smooth manifold $M$.
An $L$-connection on $V$ is a bilinear map $\nabla:\Gamma(L)\times\Gamma(V)\mapsto\Gamma(V)$ that satisfies
\begin{align*}
\nabla_{fX}v&=f\nabla_{X}v,\\
\nabla_{X}fv&=f\nabla_{X}v+(\rho(X)f)v,
\end{align*}
for all $X\in\Gamma(L)$, $v\in\Gamma(V)$ and $f\in C^{\infty}(M)$.

The following lemma is a direct consequence of Theorem~\ref{eqv} and Equations~\eqref{deltaf} and~\eqref{fnabla}.
\begin{lemma}
Let $\nabla$ denote the hypercomplex connection defined by Equation~\eqref{connection}
on a Courant algebroid $(E,\rho,\langle,\rangle,\circ)$ endowed with a hypercomplex triple $(I,J,K)$.
If $L$ is an isotropic, involutive subbundle of $E$ stable under $I$, $J$, and $K$,
then $\nabla$ induces a torsion-free $L$-connection on $L$.
\end{lemma}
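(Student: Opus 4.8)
The plan is to verify three things in turn: that the formula \eqref{connection} sends $\Gamma(L)\otimes\Gamma(L)$ into $\Gamma(L)$, that its restriction obeys the two defining axioms of an $L$-connection on $L$, and that the resulting connection is torsion-free. Throughout I would exploit the three standing hypotheses on $L$ separately and in distinct roles: involutivity guarantees that Dorfman brackets of sections of $L$ remain in $\Gamma(L)$; stability under $I$, $J$, $K$ guarantees that applying any of these endomorphisms keeps us in $\Gamma(L)$; and isotropy forces every pairing $\langle U,V\rangle$ with $U,V\in\Gamma(L)$ to vanish.

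For well-definedness I would read off \eqref{connection} directly. For $U,V\in\Gamma(L)$, stability gives $IU,JV\in\Gamma(L)$, involutivity gives $JV\circ IU$, $V\circ IU$, $JV\circ U$, $V\circ U\in\Gamma(L)$, and a second application of stability (for the outer $I$, $J$, $K$, $JI$ factors) keeps each of the four summands inside $\Gamma(L)$; hence $\nabla_UV\in\Gamma(L)$. The first $L$-connection axiom $\nabla_{fU}V=f\nabla_UV$ is nothing but \eqref{fnabla} restricted to $L$. For the second axiom I would turn to \eqref{nablaf}: the obstruction to $\nabla_U(fV)=f\nabla_UV+(\rho(U)f)V$ is precisely the correction term $\triangle_f(U,V)$. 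Inspecting \eqref{deltaf}, each of its four coefficients is one of $\langle U,V\rangle$, $\langle IU,V\rangle$, $\langle JU,V\rangle$, $\langle KU,V\rangle$; since $U,IU,JU,KU$ all lie in $\Gamma(L)$ by stability and $L$ is isotropic, every one of these pairings vanishes, so $\triangle_f(U,V)=0$ and the Leibniz rule takes the required form.

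It remains to treat the torsion, and here the only genuine point requiring care is identifying the correct notion of torsion for the induced object. Because $L$ is isotropic, the symmetric part $U\circ V+V\circ U=2\mathcal{D}\langle U,V\rangle$ vanishes, so the Dorfman bracket restricts to a genuine skew bracket on $\Gamma(L)$ coinciding with $\llbracket U,V\rrbracket=\tfrac12(U\circ V-V\circ U)$; this is exactly the Lie algebroid bracket making $L$ a Lie algebroid. Consequently the torsion of the induced $L$-connection on $L$, namely $\nabla_UV-\nabla_VU-\llbracket U,V\rrbracket$, is literally the restriction of $T^\nabla$ from \eqref{deftorsion}. Invoking Theorem~\ref{eqv}, which supplies the torsion formula \eqref{torsion}, I would then observe that $\langle U,IV\rangle$, $\langle U,JV\rangle$, $\langle U,KV\rangle$ all vanish for $U,V\in\Gamma(L)$, again by stability together with isotropy, so $T^\nabla(U,V)=0$ on $L$. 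I expect no serious obstacle: once the three hypotheses are deployed in the roles just described, every step is a direct substitution, and the subtlest point is merely recognizing that isotropy is simultaneously what collapses both the correction term $\triangle_f$ and the torsion \eqref{torsion} to zero and what turns the Dorfman bracket into the Lie algebroid bracket on $L$.
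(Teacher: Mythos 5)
Your proposal is correct and follows exactly the route the paper intends: the paper states the lemma as a direct consequence of Theorem~\ref{eqv} together with Equations~\eqref{deltaf} and~\eqref{fnabla}, and your argument is precisely the unpacking of that claim (stability plus involutivity for well-definedness, \eqref{fnabla} for tensoriality, isotropy to kill $\triangle_f$ and hence recover the Leibniz rule, and isotropy plus stability to annihilate the right-hand side of \eqref{torsion} while identifying $\llbracket\cdot,\cdot\rrbracket$ with the Lie algebroid bracket on $L$). No gaps.
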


\begin{example}\label{ExampleHypercomplexFoliation}
Let $(M; i, j, k)$ be a hypercomplex manifold;  let $(I, J, K)$ be the corresponding hypercomplex structure
on the standard Courant algebroid $TM\oplus T^{*}M$ as in Example~\ref{exampleHyercomplex}; and
let $\mathcal{F}=TS$ be the integrable distribution corresponding to a foliation $S$. The Dirac subbundle
$L=\mathcal{F}\oplus\mathcal{F}^{\perp}$ is stable under $I, J, K$ if and only if $\mathcal{F}$ is stable under $i, j, k$.
In this situation, the hypercomplex connection $\nabla$ defined in Equation~\eqref{connection} defines
a torsion-free $L$-connection on $L$ such that $\nabla I=\nabla J=\nabla K=0$.
If we take $\mathcal{F}=TM$, then we get the Obata connection.
\end{example}

\begin{example}\label{ExampleLagrangianFoliation}
Let $(M, j, \omega)$ be a holomorphic symplectic manifold with the complex structure $j$
and the holomorphic symplectic form $\omega=\omega_{1}-\sqrt{-1}\omega_{2}$;
let $I, J, K$ be the hypercomplex structure on the standard Courant algebroid $TM\oplus T^{*}M$
as in Example~\ref{exampleHoloSymp}; and let $\mathcal{F}=TS$ be the integrable distribution of a foliation $S$.
Then the Dirac subbundle $L=\mathcal{F}\oplus\mathcal{F}^{\perp}$ is stable under $I, J, K$
if and only if $S$ is a complex Lagrangian foliation of $(M;j,\omega)$. In this situation,
the hypercomplex connection $\nabla$ defined in Equation~\eqref{connection}
defines a torsion-free $L$-connection on $L$ that satisfies $\nabla I=\nabla J=\nabla K=0$.
Explicitly, $\forall X,Y\in\Gamma(\mathcal{F})$, the connection can be written as
\[\nabla_{X}Y=\frac{1}{2}(\omega_{2}^{-1})\diese\big((\mathcal{L}_{jY}\omega_{1})\diese X
+j^{*}((\mathcal{L}_{Y}\omega_{1})\diese X)\big).\]
This torsion-free connection on $TS$ appears in Behrend and Fantechi's work
on the Donaldson-Thomas invariants~\cite{Kai}.
We will return to it in Corollary~\ref{LagrangianFoliation}.
\end{example}

\subsection{Proofs of Theorems~\ref{hypercomplex-connection} and~\ref{eqv}}

\begin{proof}[Proof of Theorem~\ref{hypercomplex-connection}]
Let $\nabla:\Gamma(E)\otimes\Gamma(E)\rightarrow\Gamma(E)$ be the bilinear map as defined in Equation~\eqref{connection}.

\textbf{(1)}\quad We will prove that $\nabla I=\nabla J=\nabla K=0$.

By the properties of Courant algebroid and almost hypercomplex conditions, we can verify that $\nabla$  is
indeed  a hypercomplex connection.

Given that for all $U\in\Gamma(E), Y\in\Gamma(L_{J})$,
\[\nabla_{U}Y=-I\frac{iJ+1}{2}(Y\circ IU-I(Y\circ U))=-\frac{1-iJ}{2}I(Y\circ IU-I(Y\circ U))\in\Gamma(L_{J}),\]
we have $\nabla_{U}JY=i\nabla_{U}Y=J\nabla_{U}Y$.  Similarly, we have $\nabla_{U}J\xi=J\nabla_{U}\xi$, $\forall U\in\Gamma(E), \xi\in\Gamma(L_{J}^{*})$. Therefore,  $\nabla J=0$.

 A simple computation shows that, for all $U\in\Gamma(E), Y\in\Gamma(L_{J})$,
\begin{align*}
&I\nabla_{U}Y-\nabla_{U}IY\\
=&\frac{iJ+1}{2}(Y\circ IU-I(Y\circ U))+\frac{iJ+1}{2}I(IY\circ IU-I(IY\circ U))\\
=&\frac{iJ+1}{4}I\mathcal{N}(I,I)(Y,U)=0.
\end{align*}
Hence, we have $I\nabla_{U}Y=\nabla_{U}IY$. Similarly, we have $I\nabla_{U}\xi=\nabla_{U}I\xi, \forall U\in\Gamma(E), \xi\in\Gamma(L^{*}_{J})$. Therefore, $\nabla I=0$.

As $K=IJ$, we have $\nabla_{U} K=(\nabla_{U} I)J+I\nabla_{U} J=0, \forall U\in\Gamma(E).$
Thus, $\nabla K=0$.

\textbf{(2)}\quad Next we will prove Equation~\eqref{torsion}.
We claim that if $(I,J,K)$ is an almost hypercomplex structure
on a Courant algebroid $(E,\rho,\langle,\rangle,\circ)$, then, for all $U,V\in\Gamma(E)$,
\begin{equation}\label{torsion1}
T^{\nabla}(U,V)-\frac{1}{2}K\mathcal{N}(I,J)(U,V)=I\mathcal{D}\langle U,IV\rangle
+J\mathcal{D}\langle U,JV\rangle+K\mathcal{D}\langle U,KV\rangle.
\end{equation}
For all $U,V\in\Gamma(E)$, by the Courant algebroid properties and almost hypercomplex conditions, we have
\begin{align*}
&T^{\nabla}(U,V)-\frac{1}{2}K\mathcal{N}(I,J)(U,V)\\
=&\nabla_{U}V-\nabla_{V}U-\llbracket U,V\rrbracket -\frac{1}{2}K\mathcal{N}(I,J)(U,V)\\
=&-\frac{1}{2}K(JV\circ IU-J(V\circ IU)-I(JV\circ U)+JI(V\circ U))\\
&+\frac{1}{2}K(JU\circ IV-J(U\circ IV)-I(JU\circ V)+JI(U\circ V))-\frac{1}{2}(U\circ V-V\circ U)\\
&-\frac{1}{2}K(IU\circ JV-I(U\circ JV)-J(IU\circ V)+JU\circ IV-J(U\circ IV)-I(JU\circ V))\\
=&-\frac{1}{2}K(JV\circ IU-J(V\circ IU)-I(JV\circ U)+IU\circ JV-I(U\circ JV)-J(IU\circ V))\\
=&-K(\mathcal{D}\langle IU,JV\rangle-I\mathcal{D}\langle U,JV\rangle-J\mathcal{D}\langle IU,V\rangle)\\
=&I\mathcal{D}\langle U,IV\rangle+J\mathcal{D}\langle U,JV\rangle+K\mathcal{D}\langle U,KV\rangle.
\end{align*}
As $\mathcal{N}(I,J)=0$, Equation~\eqref{torsion} thus follows.

\textbf{(3)}\quad We will now prove the uniqueness of the hypercomplex connection
that satisfies Equations~\eqref{nablaI} and~\eqref{torsion}.
Assume there exist two hypercomplex connections $\nabla^{1}$ and $\nabla^{2}$ that satisfy Equations~\eqref{nablaI}
and~\eqref{torsion}. For all $U,V\in\Gamma(E)$, set $\Xi(U,V)=\nabla^{1}_{U}V-\nabla^{2}_{U}V$.
It follows from Equation~\eqref{nablaI} that
\[ \Xi(U,IV)=I\Xi(U,V), \quad \Xi(U,JV)=J\Xi(U,V), \quad \Xi(U,KV)=K\Xi(U,V), \]
and from Equation~\eqref{torsion} that $\Xi(U,V)=\Xi(V,U)$. Therefore,
\begin{align*}
&K\Xi(U,U)=IJ\Xi(U,U)=I\Xi(U,JU)=I\Xi(JU,U)=\Xi(JU,IU)\\
=&\Xi(IU,JU)=J\Xi(IU,U)=J\Xi(U,IU)=JI\Xi(U,U)=-K\Xi(U,U).
\end{align*}
Hence, $\Xi(U,U)=0$ for all $U\in\Gamma(E)$. Consequently,
\[ \Xi(U,V)=\frac{1}{2}(\Xi(U+V,U+V)-\Xi(U,U)-\Xi(V,V))=0 \]
for all $U,V\in\Gamma(E)$. Thus, the uniqueness of the hypercomplex connection satisfying Equations~\eqref{nablaI} and~\eqref{torsion} is established.
\end{proof}

Theorem~\ref{eqv} follows from Theorem~\ref{hypercomplex-connection} and the next three lemmas.

\begin{lemma}
 Let $(I,J,K)$ be an almost hypercomplex structure on a Courant algebroid $(E,\rho,\langle,\rangle,\circ)$.
Assume that there exists a hypercomplex connection $\nabla$ satisfying $\nabla I=\nabla J=\nabla K=0$ and
$T^{\nabla}(U,V)=I\mathcal{D}\langle U,IV\rangle+J\mathcal{D}\langle U,JV\rangle+K\mathcal{D}\langle U,KV\rangle$
for all $U,V\in\Gamma(E)$. Then $\mathcal{N}(I,J)=0$.
\end{lemma}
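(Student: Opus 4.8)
The plan is to reconstruct the Dorfman bracket from the connection and then feed the result into the definition of $\mathcal{N}(I,J)$. Combining the definition~\eqref{deftorsion} of the torsion with the two Courant axioms $\llbracket U,V\rrbracket=\frac12(U\circ V-V\circ U)$ and $U\circ V+V\circ U=2\mathcal{D}\langle U,V\rangle$ yields the identity
\[
U\circ V=\nabla_{U}V-\nabla_{V}U-T^{\nabla}(U,V)+\mathcal{D}\langle U,V\rangle,
\]
which holds for any hypercomplex connection. The idea is to substitute this expression for every Dorfman bracket occurring in $\mathcal{N}(I,J)(U,V)$ and to simplify using the hypotheses.

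First I would dispose of the two terms $IJ(U\circ V)$ and $JI(U\circ V)$ in the Nijenhuis concomitant: since $I$ and $J$ anticommute these cancel outright, so $\mathcal{N}(I,J)(U,V)$ reduces to the six remaining brackets $IU\circ JV$, $I(U\circ JV)$, $J(IU\circ V)$, $JU\circ IV$, $J(U\circ IV)$, $I(JU\circ V)$. After inserting the identity above, each of these splits into a covariant-derivative part, a torsion part, and a $\mathcal{D}$-part. The crucial observation is that the twelve covariant-derivative contributions cancel in pairs once the compatibility relations $\nabla I=\nabla J=\nabla K=0$ are used to move $I,J,K$ through $\nabla$ (for instance the term $J\nabla_{IU}V$ arising from $IU\circ JV$ is cancelled by the one coming from $J(IU\circ V)$). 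This cancellation uses only the compatibility hypothesis, and it leaves $\mathcal{N}(I,J)(U,V)$ expressed purely through $T^{\nabla}$ and $\mathcal{D}$.

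At this stage I would insert the explicit torsion~\eqref{torsion}. Each evaluation such as $T^{\nabla}(IU,JV)$ produces three $\mathcal{D}$-terms whose pairings I would reduce using skew-symmetry $\langle IU,V\rangle=-\langle U,IV\rangle$, orthogonality $\langle IU,IV\rangle=\langle U,V\rangle$, and the quaternionic relations $IJ=K$, $JI=-K$, and so on. Collecting terms, the torsion contribution collapses to $-\mathcal{D}\big(\langle IU,JV\rangle+\langle JU,IV\rangle\big)$, which vanishes because $\langle IU,JV\rangle=-\langle U,KV\rangle=-\langle JU,IV\rangle$; an entirely analogous bookkeeping shows that the residual $\mathcal{D}$-part collapses to zero as well. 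Hence $\mathcal{N}(I,J)(U,V)=0$ for all $U,V\in\Gamma(E)$.

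The main obstacle is organizational rather than conceptual: the substitution of~\eqref{torsion} generates a large number of $\mathcal{D}$-terms, and the argument succeeds only if they are matched and cancelled without sign errors. What makes the computation manageable is the early, clean disappearance of every covariant-derivative term, which relies solely on $\nabla I=\nabla J=\nabla K=0$; the remaining work is then finite-dimensional tensor algebra in the endomorphisms $I,J,K$ and the pairing $\langle,\rangle$, essentially the reverse of the computation carried out in the proof of Theorem~\ref{hypercomplex-connection}.
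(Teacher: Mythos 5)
Your proposal is correct and follows essentially the same route as the paper: both invert the torsion identity to write $U\circ V=\nabla_{U}V-\nabla_{V}U+\mathcal{D}\langle U,V\rangle-T^{\nabla}(U,V)$, substitute into the six surviving terms of $\mathcal{N}(I,J)$ (the $IJ+JI$ terms having cancelled by anticommutativity), and observe that the covariant-derivative and $\mathcal{D}$-contributions cancel using $\nabla I=\nabla J=0$ and the quaternionic relations. The only caveat is that the pairwise cancellation of the $\nabla$-terms also uses the anticommutativity of $I$ and $J$ (for the $IJ\nabla_UV+JI\nabla_UV$ pair), not solely the compatibility hypothesis, but this is a harmless imprecision.
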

\begin{proof}
Assume that there exists a hypercomplex connection $\nabla$ satisfying Equations~\eqref{nablaI} and~\eqref{torsion}.
Equation~\eqref{torsion} implies that
\begin{equation*}
U\circ V=\nabla_{U}V-\nabla_{V}U+\mathcal{D}\langle U,V\rangle
-(I\mathcal{D}\langle U,IV\rangle+J\mathcal{D}\langle U,JV\rangle+K\mathcal{D}\langle U,KV\rangle)
\end{equation*}
for all $U,V\in\Gamma(E)$. Therefore,
\[ IU\circ JV=\nabla_{IU}JV-\nabla_{JV}IU+\mathcal{D}\langle IU,JV\rangle
-(I\mathcal{D}\langle IU,IJV\rangle+J\mathcal{D}\langle IU,JJV\rangle+K\mathcal{D}\langle IU,KJV\rangle).\]
Since $\nabla I=\nabla J=0$, it follows that
\[IU\circ JV=J\nabla_{IU}V-I\nabla_{JV}U-\mathcal{D}\langle U,KV\rangle-I\mathcal{D}\langle U,JV\rangle
-J\mathcal{D}\langle U,IV\rangle+K\mathcal{D}\langle U,V\rangle.\]
Similarly, we have
\begin{align*}
&JU\circ IV=I\nabla_{JU}V-J\nabla_{IV}U+\mathcal{D}\langle U,KV\rangle-I\mathcal{D}\langle U,JV\rangle
-J\mathcal{D}\langle U,IV\rangle-K\mathcal{D}\langle U,V\rangle;\\
&U\circ JV=J\nabla_{U}V-\nabla_{JV}U+\mathcal{D}\langle U,JV\rangle-I\mathcal{D}\langle U,KV\rangle
+J\mathcal{D}\langle U,V\rangle+K\mathcal{D}\langle U,IV\rangle;\\
&JU\circ V=\nabla_{JU}V-J\nabla_{V}U-\mathcal{D}\langle U,JV\rangle-I\mathcal{D}\langle U,KV\rangle
-J\mathcal{D}\langle U,V\rangle+K\mathcal{D}\langle U,IV\rangle;\\
&U\circ IV=I\nabla_{U}V-\nabla_{IV}U+\mathcal{D}\langle U,IV\rangle+I\mathcal{D}\langle U,V\rangle
+J\mathcal{D}\langle U,KV\rangle-K\mathcal{D}\langle U,JV\rangle;\\
&IU\circ V=\nabla_{IU}V-I\nabla_{V}U-\mathcal{D}\langle U,IV\rangle-I\mathcal{D}\langle U,V\rangle
+J\mathcal{D}\langle U,KV\rangle-K\mathcal{D}\langle U,JV\rangle.
\end{align*}
A simple computation shows that
\[ \mathcal{N}(I,J)(U,V)=IU\circ JV-I(U\circ JV)-J(IU\circ V)+JU\circ IV-J(U\circ IV)-I(JU\circ V)=0 \]
for all $U,V\in\Gamma(E)$.
Hence, $\mathcal{N}(I,J)=0$.
\end{proof}

\begin{lemma}
Let $(I,J,K)$ be an almost hypercomplex structure on a Courant algebroid $(E,\rho,\langle,\rangle,\circ)$.
If $\mathcal{N}(I,J)=0$, then $\mathcal{N}(I,I)=\mathcal{N}(J,J)=0$.
\end{lemma}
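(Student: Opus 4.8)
The plan is to exploit that, under the almost hypercomplex hypothesis, the three concomitants $\mathcal{N}(I,I)$, $\mathcal{N}(I,J)$, $\mathcal{N}(J,J)$ are all genuine $(2,1)$-tensors (Remark~\ref{tensor}), in fact skew $\mathcal{N}_{I,I},\mathcal{N}_{I,J},\mathcal{N}_{J,J}\in\Gamma(\wedge^{3}E^{*})$ (Lemma~\ref{skew}). This tensoriality is precisely what makes the argument work: it allows me to substitute pointwise-transformed sections such as $IU$, $JU$, $KU$ into the arguments of $\mathcal{N}(I,J)$ without producing derivative corrections. I would first record a reduction that halves the work. Since $\mathcal{N}(I,J)=\mathcal{N}(J,I)$, and since $(J,I,-K)$ is again an almost hypercomplex triple (one checks $J\cdot I\cdot(-K)=-1$ and $JI=-K$) satisfying the same hypothesis $\mathcal{N}(J,I)=0$, it suffices to prove $\mathcal{N}(J,J)=0$; the vanishing of $\mathcal{N}(I,I)$ then follows verbatim after interchanging the roles of $I$ and $J$.

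For the key step I would start from the reduced expansion (the anticommutation $IJ+JI=0$ kills the last two terms of the definition),
\[
\mathcal{N}(I,J)(U,V)=IU\circ JV-I(U\circ JV)-J(IU\circ V)+JU\circ IV-J(U\circ IV)-I(JU\circ V),
\]
together with
\[
\tfrac{1}{2}\mathcal{N}(J,J)(U,V)=JU\circ JV-J(U\circ JV)-J(JU\circ V)-U\circ V.
\]
Using the quaternion relations $IJ=K=-JI$, $JK=I=-KJ$, $KI=J=-IK$ to compute the action of $I,J,K$ on the arguments, I would form the combination $\mathcal{N}(I,J)(KU,V)+\mathcal{N}(I,J)(U,KV)$ (and, as required, the evaluations of $\mathcal{N}(I,J)$ at $(IU,IV)$, $(JU,JV)$, $(KU,KV)$), expand, and match terms. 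A preliminary computation already shows that $\mathcal{N}(I,J)(KU,V)+\mathcal{N}(I,J)(U,KV)$ yields $\tfrac{1}{2}\bigl(\mathcal{N}(I,I)-\mathcal{N}(J,J)\bigr)(U,V)$ plus correction brackets; the plan is to choose the remaining evaluations so that these corrections telescope and cancel, isolating $\mathcal{N}(J,J)$ as an explicit combination of values of the (vanishing) tensor $\mathcal{N}(I,J)$.

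Conceptually, the same fact is most transparent through eigenbundles, and I would use this picture to guide and cross-check the direct identity. Here $\mathcal{N}(J,J)=0$ is equivalent to involutivity of $L_{J}$, i.e.\ to $\langle Y\circ Z,W\rangle=0$ for all $Y,Z,W\in\Gamma(L_{J})$. Feeding $L_{J}$-sections into the vanishing $3$-form $\mathcal{N}_{I,J}$ and using the Courant axiom $\rho(x)\langle y,z\rangle=\langle x\circ y,z\rangle+\langle y,x\circ z\rangle$ together with $J|_{L_{J}}=i$, $J|_{L_{J}^{*}}=-i$, one obtains after simplification the relation
\[
\langle IY\circ Z,W\rangle+\langle Y\circ IZ,W\rangle+\langle Y\circ Z,IW\rangle=0 .
\]
Inserting also $L_{J}^{*}$-arguments into $\mathcal{N}_{I,J}\equiv 0$, and using the isomorphism $I\colon L_{J}\xrightarrow{\sim}L_{J}^{*}$ to re-express them, converts the full family of these relations into $\langle Y\circ Z,W\rangle=0$, which is the desired involutivity.

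The main obstacle is purely computational rather than conceptual: organizing the many Dorfman-bracket terms produced by the substitutions and verifying that the correction terms cancel (equivalently, that the eigenbundle relations are enough to pin down $\langle Y\circ Z,W\rangle$). No input beyond the Courant algebroid axioms---used only to absorb the $\mathcal{D}$-contributions, which tensoriality has already accounted for---and the quaternion relations is needed; in particular the Jacobi-type axiom of the Dorfman bracket does not enter, since $\mathcal{N}(I,J)$ is being used throughout as a pointwise tensor.
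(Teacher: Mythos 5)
Your two routes both circle the right idea---evaluate the vanishing tensor $\mathcal{N}(I,J)$ on suitably chosen arguments---but neither is actually closed, and the one concrete identity you do derive is not the one that finishes the job. The paper's proof is a single evaluation: for $X,Y\in\Gamma(L_{J})$ one computes, using $J|_{L_{J}}=i$ and $IJ+JI=0$,
\begin{equation*}
\mathcal{N}(I,J)(X,Y)=(i-J)(IX\circ Y+X\circ IY)-2iI(X\circ Y),
\end{equation*}
and since $(i-J)=2i\cdot\mathrm{pr}_{L_{J}^{*}}$, the hypothesis $\mathcal{N}(I,J)=0$ lets one \emph{solve} for the bracket: $X\circ Y=-\tfrac{1-iJ}{2}I(IX\circ Y+X\circ IY)\in\Gamma(L_{J})$, because $I$ carries $L_{J}^{*}$ to $L_{J}$. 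Hence $L_{J}$ is involutive, i.e.\ $\mathcal{N}(J,J)=0$, and $\mathcal{N}(I,I)=0$ follows by the symmetry you correctly set up via the triple $(J,I,-K)$. Your scalar relation $\langle IY\circ Z,W\rangle+\langle Y\circ IZ,W\rangle+\langle Y\circ Z,IW\rangle=0$ is exactly the pairing of the displayed identity against $W\in\Gamma(L_{J})$, and it is correct---but by itself it controls the wrong components: it constrains the $L_{J}^{*}$-parts of the \emph{mixed} brackets $IY\circ Z$, $Y\circ IZ$ and the $L_{J}$-part of $Y\circ Z$, whereas involutivity is the vanishing of the $L_{J}^{*}$-part of $Y\circ Z$, i.e.\ of $\langle Y\circ Z,W\rangle$ itself. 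The step you leave as ``inserting also $L_{J}^{*}$-arguments \dots converts the full family of these relations into $\langle Y\circ Z,W\rangle=0$'' is precisely where the content lies. It can be made to work: for $\xi\in\Gamma(L_{J}^{*})$ and $Z\in\Gamma(L_{J})$ one finds $\mathcal{N}(I,J)(\xi,Z)=2i\,\mathrm{pr}_{L_{J}^{*}}(I\xi\circ Z)-2i\,\mathrm{pr}_{L_{J}}(\xi\circ IZ)$, and since $L_{J}\cap L_{J}^{*}=0$ the vanishing of this forces both projections to vanish separately, giving involutivity of $L_{J}$ (take $Y=I\xi$) and of $L_{J}^{*}$ at once. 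But this computation has to be done; it is not a formal consequence of the pure-$L_{J}$ and pure-$L_{J}^{*}$ relations.

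Your primary route---forming $\mathcal{N}(I,J)(KU,V)+\mathcal{N}(I,J)(U,KV)$ and companions and hoping the ``correction brackets telescope''---is, as written, a plan rather than a proof: the claimed identity producing $\tfrac{1}{2}(\mathcal{N}(I,I)-\mathcal{N}(J,J))(U,V)$ is unverified, and the cancellation of the corrections is exactly the assertion to be proved. (Tensoriality of $\mathcal{N}(I,J)$ does justify substituting $KU$ for $U$, so the strategy is legitimate in principle, but no choice of evaluations is exhibited and no cancellation is checked.) I would recommend discarding that route and promoting your eigenbundle ``cross-check'' to the actual argument, completed by the mixed-argument evaluation above---at which point you have essentially reproduced the paper's proof in scalar form; the paper's vector-valued version is shorter still.
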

\begin{proof}
Since for all $X,Y\in\Gamma(L_{J})$,
\begin{align*}
&\mathcal{N}(I,J)(X,Y)
= IX\circ JY-I(X\circ JY)-J(IX\circ Y)+JX\circ IY-J(X\circ IY)-I(JX\circ Y)\\
=&(i-J)(IX\circ Y+X\circ IY)-2iI(X\circ Y),
\end{align*}
we have $X\circ Y=-\frac{1-iJ}{2}I(IX\circ Y+X\circ IY)\in\Gamma(L_{J} )$ from $N(I, J)=0$.
Thus, $L_{J}$ is involutive, or equivalently $\mathcal{N}(J,J)=0$.
Similarly, $\mathcal{N}(I,I)=0$.
\end{proof}

\begin{lemma}
Let $(I,J,K)$ be an almost hypercomplex structure on a Courant algebroid $(E,\rho,\langle,\rangle,\circ)$.
If $\mathcal{N}(I,I)=\mathcal{N}(J, J)=0$, then  all six Nijenhuis tensors
$\mathcal{N}(I,I)$, $\mathcal{N}(I,J)$, $\mathcal{N}(I,K)$,
$\mathcal{N}(J,J)$, $\mathcal{N}(J,K)$, and $\mathcal{N}(K,K)$ vanish.
\end{lemma}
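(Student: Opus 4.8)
The plan is to reduce the whole statement to the single vanishing $\mathcal{N}(I,J)=0$ and then exploit the symmetry of the construction under permutations of $I$, $J$, $K$. First I would revisit the proof of Theorem~\ref{hypercomplex-connection} and observe that its part (1) establishes $\nabla I=\nabla J=\nabla K=0$ for the connection $\nabla$ of Equation~\eqref{connection} using only the hypothesis $\mathcal{N}(I,I)=0$: the identity $\nabla_{U}JY=J\nabla_{U}Y$ is purely algebraic (it follows from $\nabla_{U}Y\in\Gamma(L_{J})$, which needs no integrability), the identity $I\nabla_{U}Y-\nabla_{U}IY=\frac{iJ+1}{4}I\mathcal{N}(I,I)(Y,U)$ vanishes because $\mathcal{N}(I,I)=0$, and $\nabla K=0$ then follows from $K=IJ$. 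Hence, under our hypotheses, $\nabla$ already parallelizes the whole triple.

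The crux is to prove $\mathcal{N}(I,J)=0$, and this is the step I expect to be the main obstacle. Since $\mathcal{N}(I,I)=\mathcal{N}(J,J)=0$, both $L_{I}$ and $L_{J}$ are involutive, and by Lemma~\ref{skew} the map $\mathcal{N}_{I,J}$ is a section of $\wedge^{3}E^{*}$, hence a totally skew tensor; it therefore suffices to check that it vanishes on triples drawn from the eigenbundle decomposition $E_{\CC}=L_{J}\oplus L_{J}^{*}$. Following the computation already used in the previous lemma, on $X,Y\in\Gamma(L_{J})$ one has $\mathcal{N}(I,J)(X,Y)=(i-J)(IX\circ Y+X\circ IY)-2iI(X\circ Y)$; the involutivity of $L_{J}$ gives $X\circ Y\in\Gamma(L_{J})$, while $I$ interchanges $L_{J}$ and $L_{J}^{*}$, and pairing against a third eigenvector while feeding in the involutivity of $L_{I}$ should collapse each of the four types of triples to zero. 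The delicate point is precisely that the vanishing on the all-$L_{J}$ triples is not formal: it genuinely requires the integrability of $L_{I}$ as well, so the argument must combine the two involutivity hypotheses rather than use either alone. An alternative, and perhaps cleaner, route to the same conclusion is to combine the universal identity $U\circ V=\nabla_{U}V-\nabla_{V}U-T^{\nabla}(U,V)+\mathcal{D}\langle U,V\rangle$ with Equation~\eqref{torsion1}, which gives $T^{\nabla}=T_{0}+\tfrac12 K\mathcal{N}(I,J)$ where $T_{0}(U,V)=I\mathcal{D}\langle U,IV\rangle+J\mathcal{D}\langle U,JV\rangle+K\mathcal{D}\langle U,KV\rangle$; substituting into the definition of $\mathcal{N}(I,J)$ and using $\nabla I=\nabla J=0$ (so that the genuine $\nabla$-terms cancel) turns the bracket-free part into the identically vanishing expression established in the first of the three lemmas above, and leaves a linear self-equation expressing $\mathcal{N}(I,J)$ as a fixed linear operator applied to $K\mathcal{N}(I,J)$, whose only solution, as one checks using the skew-symmetry of $\mathcal{N}_{I,J}$, is $\mathcal{N}(I,J)=0$.

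Once $\mathcal{N}(I,J)=0$ is in hand, Equation~\eqref{torsion1} shows that the torsion of $\nabla$ is the symmetric expression $T^{\nabla}(U,V)=I\mathcal{D}\langle U,IV\rangle+J\mathcal{D}\langle U,JV\rangle+K\mathcal{D}\langle U,KV\rangle$ of Equation~\eqref{torsion}, which is manifestly invariant under permutations of $I$, $J$, $K$. I would then run the computation of the first of the three lemmas above---substituting $U\circ V=\nabla_{U}V-\nabla_{V}U-T^{\nabla}(U,V)+\mathcal{D}\langle U,V\rangle$ into $\mathcal{N}(F,G)$ and invoking $\nabla I=\nabla J=\nabla K=0$---for each of the remaining pairs $(F,G)\in\{(I,K),(J,K),(K,K)\}$. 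Because both the parallelism condition and the torsion formula are symmetric in $I$, $J$, $K$, this is the same calculation that yielded $\mathcal{N}(I,J)=0$, now merely with the indices relabelled, so it forces $\mathcal{N}(I,K)=\mathcal{N}(J,K)=\mathcal{N}(K,K)=0$; together with the two hypotheses, this establishes the vanishing of all six Nijenhuis tensors.
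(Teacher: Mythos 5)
Your overall architecture is sound and matches the paper's: reduce everything to $\mathcal{N}(I,J)=0$, then propagate to the remaining tensors by the $I$--$J$--$K$ symmetry of the parallelism condition and of the torsion expression. But the one step you yourself identify as ``the main obstacle''---proving $\mathcal{N}(I,J)=0$ on triples of like type---is exactly the step you do not carry out, and neither of your two routes closes it. In route (a), knowing that $\mathcal{N}(I,J)(X,Y)=(i-J)(IX\circ Y+X\circ IY)-2iI(X\circ Y)$ lies in $L_J^*$ for $X,Y\in\Gamma(L_J)$ does not make $\langle\mathcal{N}(I,J)(X,Y),Z\rangle$ vanish for $Z\in\Gamma(L_J)$, since $L_J^*$ pairs nondegenerately with $L_J$; saying that the involutivity of $L_I$ ``should collapse'' these components is precisely the assertion that needs proof. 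The paper's actual mechanism is a decomposition trick: it writes $\tfrac{1}{2}\mathcal{N}(I,I)(X,Y)=P_1(X,Y)-P_2(X,Y)$ with $P_1(X,Y)=IX\circ IY-I\tfrac{1-iJ}{2}(X\circ IY)-I\tfrac{1-iJ}{2}(IX\circ Y)\in\Gamma(L_J^*)$ and $P_2(X,Y)=I\tfrac{1+iJ}{2}(X\circ IY)+I\tfrac{1+iJ}{2}(IX\circ Y)+X\circ Y\in\Gamma(L_J)$ (memberships that use the involutivity of both $L_J$ and $L_J^*$), concludes $P_1=P_2=0$ separately from $\mathcal{N}(I,I)=0$, and then observes $\mathcal{N}(I,J)(X,Y)=-2iIP_2(X,Y)$ and $\mathcal{N}(K,K)(X,Y)=-2P_1(X,Y)-2P_2(X,Y)$. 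That is the missing idea: $\mathcal{N}(I,I)$ must be evaluated on $J$-eigenvectors and split according to $J$-type, not invoked as ``involutivity of $L_I$'' in the abstract.

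Route (b) is a genuinely different and interesting reformulation, but its decisive claim is unsubstantiated. Substituting $T^{\nabla}=T_0+\tfrac12 K\mathcal{N}(I,J)$ into the expansion of $\mathcal{N}(I,J)$ does yield a linear self-equation for $N=\mathcal{N}(I,J)$, but skew-symmetry of $\mathcal{N}_{I,J}$ alone does not force $N=0$: on $X,Y\in\Gamma(L_J)$ the equation reduces to $N(X,Y)=-iJN(X,Y)+I\tfrac{1-iJ}{2}\bigl(N(IX,Y)+N(X,IY)\bigr)$, which couples the pure values to the mixed values $N(\xi,Y)$. To conclude you would still need to show the mixed values vanish and that $N(X,Y)\in\Gamma(L_J^*)$---both of which come from the involutivity computations you were trying to avoid---so the fixed-point argument does not buy you an independent proof. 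A smaller, fixable issue: your final ``relabel the indices'' step handles $\mathcal{N}(I,K)$ and $\mathcal{N}(J,K)$ (the triples $(I,K,-J)$ and $(J,K,I)$ are again almost hypercomplex and $T_0$ is invariant), but $\mathcal{N}(K,K)$ is not an instance of that two-distinct-structures computation; you need either the separate identity $\mathcal{N}(K,K)(X,Y)=-2P_1-2P_2$ as in the paper, or to apply the implication ``$\mathcal{N}(A,B)=0\Rightarrow\mathcal{N}(A,A)=\mathcal{N}(B,B)=0$'' to the triple $(K,I,J)$ once $\mathcal{N}(I,K)=0$ is known.
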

\begin{proof}
First, we will prove $\mathcal{N}(I,J)=0$ and $\mathcal{N}(K,K)=0$ by checking that
$\mathcal{N}(I,J)(X,Y)=\mathcal{N}(I,J)(\xi,\eta)=\mathcal{N}(I,J)(X,\xi)=0$ and $\mathcal{N}( K,K)(X,Y)=\mathcal{N}( K,K)(\xi,\eta)=\mathcal{N}( K,K)(X,\xi)=0$ for all $X,Y\in\Gamma(L_J), \xi,\eta\in\Gamma(L_{J}^*)$.

Let
\begin{align*}
&P_{1}(X, Y)=IX\circ IY-I\frac{1-iJ}{2}(X\circ IY)-I\frac{1-iJ}{2}(IX\circ Y),\\
&P_{2}(X, Y)=I\frac{1+iJ}{2}(X\circ IY)+I\frac{1+iJ}{2}(IX\circ Y)+X\circ Y.
\end{align*}
Then we have $P_{1}(X,Y)\in\Gamma(L^{*}_{J})$ and $P_{2}(X, Y)\in\Gamma(L_{J})$, since $I$ swaps $L_{J}$ and $L^{*}_{J}$ and since both $L_{J},L^{*}_{J}$ are involutive.  A simple computation shows that
$\frac{1}{2}\mathcal{N}(I,I)(X,Y)=P_{1}(X,Y)-P_{2}(X, Y).$
Thus, we have $P_{1}(X,Y)=P_{2}(X,Y)=0.$
As a consequence,  by a straightforward computation,  we get
\begin{align}
&\mathcal{N}(K, K)(X,Y)=-2P_{1}(X, Y)-2P_{2}(X,Y)=0,\label{nkkxy}\\
&\mathcal{N}(I,J)(X,Y)=-2iIP_{2}(X, Y)=0.\label{nijxy}
\end{align}
Similarly,
\begin{equation}\label{nijkkxieta}
\mathcal{N}(I, J)(\xi,\eta)=\mathcal{N}(K,K)(\xi,\eta)=0.
\end{equation}
Moreover, we have
\begin{equation}\label{nijxxi}
\mathcal{N}(I,J)(X,\xi)=(-i-J)(IX\circ\xi)+(i-J)(X\circ I\xi)=0.
\end{equation}
Now $\mathcal{N}(I,J)=0$ follows from Equations \eqref{nijxy}, \eqref{nijkkxieta}, and \eqref{nijxxi}.
On the other hand,
\[\frac{1}{2}\mathcal{N}(K,K)(X,\xi)=\frac{1}{2}\mathcal{N}(IJ,IJ)(X,\xi)
= IX\circ I\xi+iIJ(X\circ I\xi)-iIJ(IX\circ\xi)-X\circ\xi.\]
As $I$ swaps $L_J$ and $L_{J}^{*}$, and as both $L_J, L_{J}^{*}$ are involutive, we have
\begin{equation}\label{nkkxxi}
\frac{1}{2}\mathcal{N}(K,K)(X,\xi)=IX\circ I\xi-I(X\circ I\xi)-I(IX\circ\xi)-X\circ\xi
=\frac{1}{2}\mathcal{N}(I,I)(X,\xi)=0.
\end{equation}
It follows from Equations \eqref{nkkxy}, \eqref{nijkkxieta} and \eqref{nkkxxi} that $\mathcal{N}(K,K)=0$.

Similarly, $\mathcal{N}(I,K)=0$, since $\mathcal{N}(I, I)=\mathcal{N}(K,K)=0$.
And, $\mathcal{N}(J,K)=0$, since $\mathcal{N}(J, J)=\mathcal{N}(K,K)=0$.

This completes the proof.
\end{proof}

\section{Holomorphic symplectic structures on Courant algebroids}
\subsection{Holomorphic symplectic structure on an arbitrary Courant algebroid}
Let $(E,\rho,\langle,\rangle,\circ)$ be a Courant algebroid endowed with a complex structure $J$.
The nondegenerate pairing $\langle,\rangle$ induces a bijection between sections of $\otimes^2 E_{\CC}^*$
and endomorphisms of $E_{\CC}$, which associates an endomorphism $\Omega\diese$ of $E_{\CC}$
with a section $\Omega$ of $\otimes^2 E_{\CC}^*$:
\[ \Omega(U,V)=\langle\Omega\diese U,V\rangle,\qquad\forall U,V\in \Gamma(E_{\CC}) .\]
The complex vector bundle $E_{\CC}$ decomposes as the direct sum
$L_{J}\oplus\cc{L}_{J}\cong L_{J}\oplus L_{J}^*$,
where we identify $\cc{L}_{J}$ and $L_{J}^*$, therefore,
an endomorphism $\Omega\diese$ of $E_{\CC}$ skew-symmetric w.r.t.\ the pairing $\langle,\rangle$
corresponds to a section $\Omega$ of
\begin{equation}\label{Ec}
\wedge^{2}E^*_{\CC}=\wedge^{2}L_{J}
\oplus(L_{J}\wedge L_{J}^{*})\oplus\wedge^{2}L_{J}^{*} .
\end{equation}
In particular, whenever $\Omega\diese(L^{*}_{J})=L_{J}$ and $\Omega\diese(L_{J})=0$,
the components of $\Omega$ in $L_{J}\wedge L_{J}^{*}$ and $\wedge^{2}L_{J}^{*}$ vanish.
We can, therefore, consider $\Omega$ to be a section of $\wedge^{2}L_{J}$.

\begin{lemma}\label{algcond}
\begin{enumerate}
\item Let $(I,J,K)$ be an almost hypercomplex structure on a Courant algebroid $(E,\rho,\langle,\rangle,\circ)$.
The endomorphism $\Omega\diese=\frac{I+iK}{2}$ of $E_{\CC}$ determines a section of
$\wedge^{2}L_{J}\subset\wedge^{2}E^*_{\CC}$ and satisfies
\begin{equation}\label{Omega1}
\Omega^{\sharp}\cc{\Omega}^{\sharp}+\cc{\Omega}^{\sharp}\Omega^{\sharp}=-\id_{E_{\CC}}.
\end{equation}
\item
Conversely, given an almost complex structure $J$ on a Courant algebroid $(E,\rho,\langle,\rangle,\circ)$
and a section $\Omega$ of $\wedge^{2}L_{J}\subset\wedge^{2}E_{\CC}^*$
satisfying $\Omega^{\sharp}\cc{\Omega}^{\sharp}+\cc{\Omega}^{\sharp}\Omega^{\sharp}=-\id_{E_{\CC}}$,
the triple $\big(I=\Omega\diese+\cc{\Omega}\diese$, $J$, $K=(-i)(\Omega\diese-\cc{\Omega}\diese)\big)$
is an almost hypercomplex structure on $E$.
\end{enumerate}
\end{lemma}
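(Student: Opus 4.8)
The plan is to handle both directions purely at the level of bundle endomorphisms of $E_{\CC}$, using three standing facts: that $I$ and $K$ are real, skew-symmetric operators squaring to $-\id$; that $J$ splits $E_{\CC}=L_{J}\oplus L_{J}^{*}$ into its $\pm i$-eigenbundles; and (in the hypercomplex case) that $K=IJ$, which follows from $IJK=-1$ and $K^{2}=-1$.

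For assertion (1), I would first note that an orthogonal $I$ with $I^{2}=-\id$ satisfies $I^{*}=I^{-1}=-I$, so $I$ and $K$ are skew-symmetric; hence $\Omega\diese=\tfrac12(I+iK)$ is skew-symmetric and therefore corresponds to a section $\Omega$ of $\wedge^{2}E_{\CC}^{*}$. To locate $\Omega$ inside $\wedge^{2}L_{J}$, I would evaluate $\Omega\diese$ on the eigenbundles via $K=IJ$: for $Y\in L_{J}$ one has $JY=iY$, so $KY=iIY$ and $\Omega\diese Y=\tfrac12(IY+i\cdot iIY)=0$, while for $\xi\in L_{J}^{*}$ one finds $\Omega\diese\xi=I\xi\in L_{J}$. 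Thus $\Omega\diese(L_{J})=0$ and the image of $\Omega\diese$ lies in $L_{J}$, which by the discussion preceding the lemma is exactly the condition $\Omega\in\Gamma(\wedge^{2}L_{J})$. Since $I,K$ are real, $\cc{\Omega}\diese=\tfrac12(I-iK)$, and I would expand $\Omega\diese\cc{\Omega}\diese+\cc{\Omega}\diese\Omega\diese$ directly, using $I^{2}=K^{2}=-\id$, $IK=-KI$, and the identity $KI=IJI=J$ (the last from $JI=-IJ$); the cross-terms collapse to $-\id_{E_{\CC}}$, giving Equation~\eqref{Omega1}.

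For the converse (2), the decisive preliminary is that $\Omega\in\wedge^{2}L_{J}$ forces $\Omega\diese(L_{J})=0$ with image in $L_{J}$, so $(\Omega\diese)^{2}=0$, and conjugating (which swaps $L_{J}$ and $L_{J}^{*}$) gives $(\cc{\Omega}\diese)^{2}=0$ as well. I would then verify that $I=\Omega\diese+\cc{\Omega}\diese$ and $K=-i(\Omega\diese-\cc{\Omega}\diese)$ are real, because conjugation interchanges $\Omega\diese$ and $\cc{\Omega}\diese$, and skew-symmetric, because both $\Omega\diese$ and $\cc{\Omega}\diese$ are. Squaring and discarding the vanishing terms $(\Omega\diese)^{2},(\cc{\Omega}\diese)^{2}$ leaves $I^{2}=\Omega\diese\cc{\Omega}\diese+\cc{\Omega}\diese\Omega\diese=-\id$ and, after the factor $(-i)^{2}$, likewise $K^{2}=-\id$; together with skew-symmetry this shows $I$ and $K$ are orthogonal, hence honest almost complex structures on the real bundle $E$. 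For the quaternionic relation I would reuse the eigenbundle description: since $\Omega\diese$ kills $L_{J}$ and maps $L_{J}^{*}$ into $L_{J}$ while $\cc{\Omega}\diese$ kills $L_{J}^{*}$ and maps $L_{J}$ into $L_{J}^{*}$, both $I$ and $K$ interchange $L_{J}$ and $L_{J}^{*}$, and one checks $IJ$ and $K$ agree on $L_{J}$ (each acting as $i\cc{\Omega}\diese$) and on $L_{J}^{*}$ (each acting as $-i\Omega\diese$). Hence $IJ=K$ on all of $E_{\CC}$, so $IJK=K^{2}=-\id$, completing $I^{2}=J^{2}=K^{2}=IJK=-\id$ and proving $(I,J,K)$ is almost hypercomplex.

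Every computation here is elementary linear algebra; I expect no real obstacle. The one place demanding genuine care is the bookkeeping of the factors of $i$ and the signs when restricting $I$ and $K$ to $L_{J}$ and $L_{J}^{*}$ and when checking $KI=J$ together with $IK=-KI$ — getting these exactly right is precisely what makes the nondegeneracy identity and the relation $IJ=K$ emerge with the correct sign, so I would carry those few restrictions out explicitly rather than by inspection.
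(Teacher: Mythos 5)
Your proposal is correct and follows essentially the same route as the paper: skew-symmetry of $I,K$ gives $\Omega\in\Gamma(\wedge^{2}E_{\CC}^{*})$, the eigenbundle computation $\Omega\diese(L_{J})=0$, $\Omega\diese|_{L_{J}^{*}}=I|_{L_{J}^{*}}$ locates $\Omega$ in $\wedge^{2}L_{J}$, the anticommutator identity is a direct expansion (note the cross-terms $\pm i(IK-KI)$ simply cancel, so the auxiliary identity $KI=J$ is not actually needed), and the converse is the same $(\Omega\diese)^{2}=(\cc{\Omega}\diese)^{2}=0$ argument with $K=IJ$ verified separately on $L_{J}$ and $L_{J}^{*}$.
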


\begin{proof}
\quad\textbf{(1)}\quad  The fact that the bilinear form $\Omega$ is skew-symmetric is a direct consequence of the skew-symmetry of $I, K$.
Thus, $\Omega$ is a section of \[\wedge^{2}E^*_{\CC}\cong\wedge^{2}L_{J}
\oplus(L_{J}\wedge L_{J}^{*})\oplus\wedge^{2}L_{J}^{*} .\]
Since $\Omega\diese=I\big(\frac{1+iJ}{2}\big)$ and $I$ swaps $L_{J}$ and $L_{J}^{*}$, we have
$\Omega\diese(L^{*}_{J})=L_{J}$, $\Omega\diese(L_{J})=0$,
$\cc{\Omega}\diese(L_{J})=L_{J}^{*}$, and $\cc{\Omega}\diese(L_{J}^{*})=0$.
Therefore, $\Omega\in\Gamma(\wedge^{2}L_{J})$ and $\cc{\Omega}\in\Gamma(\wedge^{2}L_{J}^*)$.
Finally, we have
\[\Omega^{\sharp}\cc{\Omega}^{\sharp}+\cc{\Omega}^{\sharp}\Omega^{\sharp}=\Big(\frac{I+iK}{2}\Big)\Big(\frac{I-iK}{2}\Big)+\Big(\frac{I-iK}{2}\Big)\Big(\frac{I+iK}{2}\Big)=-1.\]

\textbf{(2)}\quad  As $\Omega\in\Gamma(\wedge^{2}L_{J})$, we have $(\Omega^{\sharp})^2=(\cc{\Omega}^{\sharp})^2=0$. It follows that
$ I^{2}=K^{2}=\Omega^{\sharp}\cc{\Omega}^{\sharp}+\cc{\Omega}^{\sharp}\Omega^{\sharp}=-1 .$
For all $X\in\Gamma(L_{J}), \xi\in\Gamma(L_{J}^*)$, we have $\Omega\diese X=0,$ and $\cc{\Omega}\diese\xi=0$; consequently, we have
\[IX=\cc{\Omega}\diese X,~I\xi=\Omega\diese\xi,~KX=i\cc{\Omega}\diese X,~K\xi=-i\Omega\diese\xi.\]
Thus,
\[( IJ)(X+\xi)=iIX-iI\xi=i\cc{\Omega}^{\sharp}X-i\Omega^{\sharp}\xi=K(X+\xi) .\]
Therefore, we have $K=IJ$.  The orthogonality of $I$ with respect to the pairing $\langle,\rangle$ follows from $I^2=-1$ and the skew-symmetry of $I=\Omega\diese+\cc{\Omega}\diese$, while, the orthogonality of $K$ follows from the identity $K=IJ$.
\end{proof}

\begin{remark}\label{Oginvert}
Equation~\eqref{Omega1} means that $-\cc{\Omega}^{\sharp}:L_{J}\rightarrow L_{J}^{*}$
is the inverse map of $\Omega^{\sharp}:L^{*}_{J}\rightarrow L_{J}$.
Thus, Equation~\eqref{Omega1} can be regarded as a nondegeneracy condition on $\Omega$.
\end{remark}

The following lemma, which can easily be verified, will be needed later on.
\begin{lemma} \label{screwcap}
Assume that $(I,J,K)$ is an almost hypercomplex structure on a Courant algebroid and set $\Omega\diese=\tfrac{1}{2}(I+iK)$. Then,
\begin{enumerate}
\item $e\in L_{J}^{*}$ if and only if $\Omega^{\sharp}e=Ie=iKe$,
\item $e\in L_{J}$ if and only if $\cc{\Omega}^{\sharp}e=Ie=-iKe$.
\end{enumerate}
\end{lemma}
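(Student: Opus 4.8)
The plan is to reduce both statements to the single relation $K=IJ$ together with the eigenvalue description of the eigenbundles. Recall that $L_J$ (resp.\ $L_J^*\cong\cc{L}_J$) is the $+i$-eigenbundle (resp.\ $-i$-eigenbundle) of $J$, and that complex conjugation of endomorphisms sends $\Omega^{\sharp}=\tfrac12(I+iK)$ to $\cc{\Omega}^{\sharp}=\tfrac12(I-iK)$, since $I$ and $K$ are real endomorphisms extended $\CC$-linearly to $E_{\CC}$ (this is the convention already fixed in the proof of Lemma~\ref{algcond}). The two assertions are interchanged by the substitution $i\mapsto-i$, under which $L_J\leftrightarrow L_J^*$ and $\Omega^{\sharp}\leftrightarrow\cc{\Omega}^{\sharp}$, so it suffices to prove (1) in detail and to obtain (2) by this symmetry.

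For the forward direction of (1), I would take $e\in L_J^*$, so that $Je=-ie$. Using $K=IJ$ gives $Ke=IJe=-iIe$, hence $iKe=Ie$; substituting into $\Omega^{\sharp}=\tfrac12(I+iK)$ then yields $\Omega^{\sharp}e=\tfrac12(Ie+iKe)=Ie$, which establishes the chain $\Omega^{\sharp}e=Ie=iKe$ in one line. For the converse, I would start from the equality $Ie=iKe$, rewrite the right-hand side as $iIJe$, and apply $I$ on the left; using $I^2=-1$ this collapses to $e=iJe$, that is $Je=-ie$, which is exactly the condition $e\in L_J^*$.

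Part (2) then follows by the $i\mapsto-i$ symmetry, but for concreteness the mirror computation reads as follows: for $e\in L_J$ one has $Je=ie$, so $Ke=IJe=iIe$ and therefore $-iKe=Ie$, giving $\cc{\Omega}^{\sharp}e=\tfrac12(Ie-iKe)=Ie=-iKe$; conversely $Ie=-iKe$ gives $Ie=-iIJe$, and applying $I$ on the left yields $e=-iJe$, i.e.\ $Je=ie$, so $e\in L_J$.

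There is no genuine obstacle here: the result is a bookkeeping identity, which is why it is asserted to be easily verified. The only points requiring care are tracking the factors of $i$ and the sign conventions---in particular confirming that $\cc{\Omega}^{\sharp}=\tfrac12(I-iK)$ and that $L_J^*$ is the $-i$-eigenbundle of $J$---both of which are already pinned down by the conventions of Section~2 and Lemma~\ref{algcond}.
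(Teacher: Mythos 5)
Your proof is correct: the paper leaves this lemma as ``easily verified'' and supplies no argument of its own, and your computation using $K=IJ$ together with the eigenvalue characterizations of $L_J$ and $L_J^*$ is exactly the intended direct verification, with all signs and factors of $i$ handled consistently with the paper's conventions.
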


We are now ready to introduce the following definition.
\begin{definition}
Let $J$ be a complex structure on a Courant algebroid $E$.
A \emph{holomorphic symplectic structure on $E$ with respect to $J$} is a section $\Omega$ of $\wedge^2 L_{J}$ satisfying
\[d_{L_{J}^*}\Omega=0 \qquad\text{and}\qquad \Omega\diese\cc{\Omega}\diese
+\cc{\Omega}\diese\Omega\diese=-\id_{E_{\CC}}.\]
\end{definition}

Given a hypercomplex triple $(I,J,K)$, Lemma~\ref{skew} implies that
$\mathcal{N}_{I,J},\mathcal{N}_{J,K}\in\Gamma(\wedge^{3}E^{*})$.
We now extend $\mathcal{N}_{I,J}$ and $\mathcal{N}_{J,K}$ $\CC$-linearly to 3-forms on $E_{\CC}$.

\begin{lemma}\label{hololemma}
Let $(I, J, K)$ be a hypercomplex triple on a  Courant algebroid $(E,\rho,\langle,\rangle,\circ)$, and let
$\Omega$ be the section of $\wedge^2 L_{J}$ defined by $\Omega\diese=\frac{I+\sqrt{-1}K}{2}$. Then
\begin{eqnarray}
\frac{1}{4}\mathcal{N}_{I,J}=\frac{d_{L_{J}^*}\Omega-\cc{d_{L_{J}^*}\Omega}}{2i} \label{sicko}\\
 -\frac{1}{4}\mathcal{N}_{J,K}=\frac{d_{L_{J}^*}\Omega+\cc{d_{L_{J}^*}\Omega}}{2}\label{sickoo}.
\end{eqnarray}
\end{lemma}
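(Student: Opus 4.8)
The plan is to regard both sides of \eqref{sicko} and \eqref{sickoo} as complex $3$-forms on $E_{\CC}=L_{J}\oplus L_{J}^{*}$ and to verify the identities by evaluating them on triples each of whose entries lies in $L_{J}$ or in $L_{J}^{*}$; total skew-symmetry (Lemma~\ref{skew}) together with $\CC$-multilinearity reduces everything to such homogeneous triples. Since $\Omega\in\Gamma(\wedge^{2}L_{J})$ and $J$ is integrable (so that $L_{J}$ and $L_{J}^{*}$ are involutive and $d_{L_{J}^{*}}$ is defined), $d_{L_{J}^{*}}\Omega$ is a section of $\wedge^{3}L_{J}$ and hence nonzero only on triples from $L_{J}^{*}$, while $\cc{d_{L_{J}^{*}}\Omega}\in\Gamma(\wedge^{3}L_{J}^{*})$ is nonzero only on triples from $L_{J}$. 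Thus both right-hand sides vanish on every mixed triple, and there are exactly three cases: mixed triples, triples from $L_{J}^{*}$, and triples from $L_{J}$.

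First I would dispose of the mixed triples. Using $JX=iX$ for $X\in\Gamma(L_{J})$ and $J\xi=-i\xi$ for $\xi\in\Gamma(L_{J}^{*})$, together with the fact that $I$ swaps $L_{J}$ and $L_{J}^{*}$, a direct expansion of the Nijenhuis concomitant gives
\[\mathcal{N}(I,J)(X,\xi)=-(i+J)(IX\circ\xi)+(i-J)(X\circ I\xi),\]
and likewise with $I$ replaced by $K$. Because $\mathcal{N}(J,J)=0$, both $L_{J}$ and $L_{J}^{*}$ are involutive, so $IX\circ\xi\in\Gamma(L_{J}^{*})$ and $X\circ I\xi\in\Gamma(L_{J})$; as $(i+J)$ annihilates $L_{J}^{*}$ and $(i-J)$ annihilates $L_{J}$, both terms vanish. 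Hence $\mathcal{N}(I,J)$ and $\mathcal{N}(J,K)$ vanish on every mixed pair, and by total skew-symmetry (bringing a mixed pair into the first two slots) $\mathcal{N}_{I,J}$ and $\mathcal{N}_{J,K}$ vanish on every mixed triple, matching the right-hand sides.

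The core of the argument is the case $\xi,\eta,\zeta\in\Gamma(L_{J}^{*})$. Expanding $\mathcal{N}(I,J)(\xi,\eta)$ as above, substituting $J\xi=-i\xi$ and $J\eta=-i\eta$, pairing with $\zeta$, and using that $J$ is skew-adjoint (so $\langle(i+J)W,\zeta\rangle=2i\langle W,\zeta\rangle$ for $\zeta\in L_{J}^{*}$) and that $I\xi=\Omega^{\sharp}\xi$ on $L_{J}^{*}$ (Lemma~\ref{screwcap}), I would reduce the Nijenhuis side to
\[\mathcal{N}_{I,J}(\xi,\eta,\zeta)=-2i\big(\langle\Omega^{\sharp}\xi\circ\eta,\zeta\rangle+\langle\xi\circ\Omega^{\sharp}\eta,\zeta\rangle+\langle\xi\circ\eta,\Omega^{\sharp}\zeta\rangle\big).\]
On the other hand, since $L_{J}^{*}$ is a Lie algebroid whose bracket is $\xi\circ\eta$ (isotropy forces $\llbracket\xi,\eta\rrbracket=\xi\circ\eta$) and $\Omega(\eta,\zeta)=\langle\Omega^{\sharp}\eta,\zeta\rangle$, I would expand $d_{L_{J}^{*}}\Omega(\xi,\eta,\zeta)$ by the Chevalley--Eilenberg formula. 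The goal is the purely Courant-algebroid identity
\[\langle\Omega^{\sharp}\xi\circ\eta,\zeta\rangle+\langle\xi\circ\Omega^{\sharp}\eta,\zeta\rangle+\langle\xi\circ\eta,\Omega^{\sharp}\zeta\rangle=d_{L_{J}^{*}}\Omega(\xi,\eta,\zeta),\]
giving $\mathcal{N}_{I,J}(\xi,\eta,\zeta)=-2i\,d_{L_{J}^{*}}\Omega(\xi,\eta,\zeta)$; the analogous computation, now using $K=-iI$ on $L_{J}^{*}$, yields $\mathcal{N}_{J,K}(\xi,\eta,\zeta)=-2\,d_{L_{J}^{*}}\Omega(\xi,\eta,\zeta)$. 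This identity is the main obstacle: proving it requires converting each anchor term $\rho(\cdot)\Omega(\cdot,\cdot)$ via the compatibility axiom $\rho(x)\langle y,z\rangle=\langle x\circ y,z\rangle+\langle y,x\circ z\rangle$, applying the skew-symmetry of $\Omega^{\sharp}$ to the bracket terms, and then cancelling the residual anchor- and $\mathcal{D}$-contributions by means of $x\circ y+y\circ x=2\mathcal{D}\langle x,y\rangle$, $\langle\mathcal{D}f,x\rangle=\tfrac12\rho(x)f$, and the isotropy $\langle\eta,\zeta\rangle=0$ of $L_{J}^{*}$. After these substitutions the surplus terms pair off and the identity drops out.

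Finally, I would obtain the $L_{J}$-triples by conjugation: $\mathcal{N}(I,J)$ and $\mathcal{N}(J,K)$ are real tensors, so $\mathcal{N}_{I,J}(X,Y,Z)=\cc{\mathcal{N}_{I,J}(\cc X,\cc Y,\cc Z)}$ for $X,Y,Z\in\Gamma(L_{J})$, with $\cc X,\cc Y,\cc Z\in\Gamma(L_{J}^{*})$. Applying the previous case and the definition $\cc{d_{L_{J}^{*}}\Omega}(X,Y,Z)=\cc{d_{L_{J}^{*}}\Omega(\cc X,\cc Y,\cc Z)}$ gives $\mathcal{N}_{I,J}(X,Y,Z)=2i\,\cc{d_{L_{J}^{*}}\Omega}(X,Y,Z)$ and $\mathcal{N}_{J,K}(X,Y,Z)=-2\,\cc{d_{L_{J}^{*}}\Omega}(X,Y,Z)$. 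Collecting the three cases and recalling that $d_{L_{J}^{*}}\Omega$ survives only on $L_{J}^{*}$-triples while $\cc{d_{L_{J}^{*}}\Omega}$ survives only on $L_{J}$-triples, one checks that $\tfrac14\mathcal{N}_{I,J}$ and $-\tfrac14\mathcal{N}_{J,K}$ agree with $\tfrac{1}{2i}(d_{L_{J}^{*}}\Omega-\cc{d_{L_{J}^{*}}\Omega})$ and $\tfrac12(d_{L_{J}^{*}}\Omega+\cc{d_{L_{J}^{*}}\Omega})$ respectively on all homogeneous triples, which establishes \eqref{sicko} and \eqref{sickoo}.
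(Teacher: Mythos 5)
Your proposal is correct and follows essentially the same route as the paper: reduce to homogeneous triples via the skew-symmetry of Lemma~\ref{skew}, kill the mixed components using the involutivity of $L_{J}$ and $L_{J}^{*}$, compute $\mathcal{N}_{I,J}(\xi,\eta,\zeta)=-2i\,d_{L_{J}^{*}}\Omega(\xi,\eta,\zeta)$ on $L_{J}^{*}$-triples from the Courant axioms together with $I\xi=\Omega^{\sharp}\xi=iK\xi$, and recover the $\wedge^{3}L_{J}$ component by the reality of $\mathcal{N}_{I,J}$ and $\mathcal{N}_{J,K}$. The only cosmetic difference is that the paper disposes of the mixed triples by noting $\mathcal{N}(I,J)(X,Y)\in\Gamma(L_{J}^{*})$ and $\mathcal{N}(I,J)(\xi,\eta)\in\Gamma(L_{J})$ rather than by showing $\mathcal{N}(I,J)(X,\xi)=0$, which is an equivalent use of the same facts.
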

\begin{proof}
First, we note that $\mathcal{N}_{I,J}$ is a section of $\wedge^3L_{J}\oplus\wedge^3L_{J}^*$
because $\langle\mathcal{N}(I,J)(X,Y),\xi\rangle=0$ and $\langle\mathcal{N}(I,J)(\xi,\eta),X\rangle=0$
for all $X,Y\in\Gamma(L_{J})$  and $\xi,\eta\in\Gamma(L_{J}^{*})$.
Indeed, $\langle\mathcal{N}(I,J)(X,Y),\xi\rangle=0$ follows from 
\[\mathcal{N}(I,J)(X,Y)=(i-J)(IX\circ Y+X\circ IY)-2iI(X\circ Y)\in\Gamma(L_J^*),\]
 and $\langle\mathcal{N}(I,J)(\xi,\eta),X\rangle=0$ follows from 
 \[\mathcal{N}(I,J)(\xi,\eta)=(-i-J)(I\xi\circ \eta+\xi\circ I\eta)+2iI(\xi\circ \eta)\in\Gamma(L_J).\]
 For all $\xi,\eta,\zeta\in\Gamma(L_{J}^*)$, we have
\begin{align*}
&\langle\mathcal{N}(I,J)(\xi,\eta),\zeta\rangle=\langle(-i-J)(I\xi\circ \eta+\xi\circ I\eta)+2iI(\xi\circ \eta),\zeta\rangle\\
=&-2i\langle I\xi\circ \eta,\zeta\rangle-2i\langle \xi\circ I\eta,\zeta\rangle+2i\langle I(\xi\circ \eta),\zeta\rangle
=-2i\langle \Omega^{\sharp}\xi\circ \eta,\zeta\rangle-2i\langle \xi\circ \Omega^{\sharp}\eta,\zeta\rangle
+2i\langle \Omega^{\sharp}(\xi\circ \eta),\zeta\rangle\\
=&-2i\langle 2\mathcal{D}\langle\Omega^{\sharp}\xi,\eta\rangle,\zeta\rangle+2i\langle\eta\circ\Omega^{\sharp}\xi,\zeta\rangle-2i\langle\xi\circ\Omega^{\sharp}\eta,\zeta\rangle+2i\langle \Omega^{\sharp}(\xi\circ \eta),\zeta\rangle\\
=&-2i\rho(\zeta)\langle\Omega^{\sharp}\xi,\eta\rangle+2i\rho(\eta)\langle\Omega^{\sharp}\xi,\zeta\rangle
-2i\langle\Omega^{\sharp}\xi,\eta\circ\zeta\rangle
-2i\rho(\xi)\langle\Omega^{\sharp}\eta,\zeta\rangle
+2i\langle\Omega^{\sharp}\eta,\xi\circ\zeta\rangle+2i\langle \Omega^{\sharp}(\xi\circ \eta),\zeta\rangle\\
=&-2i\rho(\zeta)\Omega(\xi,\eta)+2i\rho(\eta)\Omega(\xi,\zeta)-2i\Omega(\xi,\eta\circ\zeta)
-2i\rho(\xi)\Omega(\eta,\zeta)+2i\Omega(\eta,\xi\circ\zeta)+2i\Omega(\xi\circ \eta,\zeta)\\
=&-2i(d_{L_{J}^{*}}\Omega)(\xi,\eta,\zeta).
\end{align*}
Now Equation~\eqref{sicko} holds, as $\cc{\mathcal{N}_{I,J}}=\mathcal{N}_{I,J}$ and $L_{J}^*=\cc{L_{J}}$.

The analogous relation
\[ -\frac{1}{4}\mathcal{N}_{J,K}=\frac{d_{L_{J}^*}\Omega+\cc{d_{L_{J}^*}\Omega}}{2} \]
can be proved in the same way.
\end{proof}

By Lemma \ref{algcond}, Lemma~\ref{hololemma} and Theorem~\ref{eqv}, we have the following theorem.

\begin{theorem}\label{hyper-holosym}
\begin{enumerate}
\item If $(I, J, K)$ is a hypercomplex triple on a Courant algebroid $E$,
then the section $\Omega$ of $\wedge^2 L_{J}$ defined by $\Omega\diese=\frac{I+\sqrt{-1}K}{2}$ is a holomorphic symplectic
structure on $E$ relative to the complex structure $J$.
\item  Let $J$ be a complex structure on a Courant algebroid $E$,
and let $\Omega\in\Gamma(\wedge^2 L_{J})$ be a holomorphic symplectic structure on $E$
relative to the complex structure $J$. Then the triple
$\big(I=\Omega\diese+\cc{\Omega}\diese, J, K=-i(\Omega\diese-\cc{\Omega}\diese)\big)$
is a hypercomplex structure on $E$.
\end{enumerate}
\end{theorem}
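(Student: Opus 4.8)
The plan is to read this theorem as the assembly of the three preceding results, with essentially no new computation required: Lemma~\ref{algcond} supplies the pointwise (algebraic) half of the correspondence, Lemma~\ref{hololemma} converts the analytic content---vanishing of the Nijenhuis tensors---into the single differential condition $d_{L_J^*}\Omega=0$, and Theorem~\ref{eqv} lets me pass between the vanishing of one Nijenhuis tensor and the full hypercomplex integrability. So the proof splits into the two implications, each of which is a short chain.

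For part (1), suppose $(I,J,K)$ is a hypercomplex triple and set $\Omega\diese=\tfrac{1}{2}(I+iK)$. Lemma~\ref{algcond}(1) immediately gives $\Omega\in\Gamma(\wedge^2 L_J)$ together with the nondegeneracy identity~\eqref{Omega1}, which is exactly the algebraic half of the definition of a holomorphic symplectic structure. It remains to produce closedness. Since $(I,J,K)$ is hypercomplex, all six Nijenhuis tensors vanish, in particular $\mathcal{N}_{I,J}=0$ and $\mathcal{N}_{J,K}=0$. Feeding these into~\eqref{sicko} and~\eqref{sickoo} yields $d_{L_J^*}\Omega-\cc{d_{L_J^*}\Omega}=0$ and $d_{L_J^*}\Omega+\cc{d_{L_J^*}\Omega}=0$; adding the two gives $d_{L_J^*}\Omega=0$. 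Hence $\Omega$ is a holomorphic symplectic structure relative to $J$.

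For part (2), start from a complex structure $J$ and a holomorphic symplectic $\Omega\in\Gamma(\wedge^2 L_J)$, and put $I=\Omega\diese+\cc{\Omega}\diese$ and $K=-i(\Omega\diese-\cc{\Omega}\diese)$, so that $\Omega\diese=\tfrac12(I+iK)$. By Lemma~\ref{algcond}(2) the triple $(I,J,K)$ is an almost hypercomplex structure (the nondegeneracy hypothesis of that lemma is precisely~\eqref{Omega1}). Now I invoke the identity~\eqref{sicko}: the closedness $d_{L_J^*}\Omega=0$ forces $\mathcal{N}_{I,J}=0$, and since the pairing is nondegenerate this means $\mathcal{N}(I,J)=0$. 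Finally Theorem~\ref{eqv}, implication (1)$\Rightarrow$(3), upgrades the vanishing of this single Nijenhuis tensor to the vanishing of all six, so $(I,J,K)$ is a genuine hypercomplex structure.

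The one point that needs care---and the only place where the argument is not a verbatim citation---is the logical order in part (2). Lemma~\ref{hololemma} is stated for a hypercomplex triple, yet here I must apply~\eqref{sicko} while only an almost hypercomplex structure with $J$ integrable is available; invoking it for a structure already known to be hypercomplex would be circular. I would therefore observe that the derivation of~\eqref{sicko} in the proof of Lemma~\ref{hololemma} uses only that $J$ is integrable---so that $L_J$ and $L_J^*$ are Lie algebroids, $d_{L_J^*}$ is defined, and the mixed components of $\mathcal{N}_{I,J}$ vanish---together with the almost hypercomplex relations, Lemma~\ref{screwcap}, and the skew-adjointness of $J$; the integrability of $I$ and $K$ is never used. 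Thus~\eqref{sicko} is legitimately available in the almost hypercomplex setting, $\mathcal{N}(I,J)=0$ is obtained honestly first, and only then does Theorem~\ref{eqv} supply the remaining five vanishings. No genuine obstacle beyond this bookkeeping arises, since the substantive computations have all been carried out in the cited lemmas.
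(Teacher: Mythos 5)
Your proposal is correct and follows exactly the route the paper intends: the theorem is stated there with the one-line justification ``By Lemma~\ref{algcond}, Lemma~\ref{hololemma} and Theorem~\ref{eqv}'', which is precisely the chain you assemble. Your additional observation---that the derivation of~\eqref{sicko} only uses the almost hypercomplex relations together with the integrability of $J$, so it may legitimately be invoked in part~(2) before the full hypercomplex property is established---is a genuine gap-filling remark that the paper leaves implicit, and it is accurate.
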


The pair of eigenbundles $(L_{J},L_{J}^{*})$ of a complex structure $J$ on a Courant algebroid
constitutes a  Lie bialgebroid; therefore, $(\Gamma(\wedge^{*}L_{J}), [ , ], d_{L_{J}^*}) $ is a differential graded Lie algebra, where $[ , ]$ denotes the Schouten bracket and $d_{L_{J}^*}$ denotes the Lie algebroid differential.

\begin{theorem} \label{EqvOmega}
Let $L_J$ and $L_{J}^*$ denote the eigenbundles of a complex structure $J$
on a Courant algebroid $(E,\rho,\langle,\rangle,\circ)$, and let $\Omega$ be a section of
$\wedge^2 L_{J}\subset\wedge^{2}E_{\CC}$ such that
$\Omega\diese\cc{\Omega}\diese+\cc{\Omega}\diese\Omega\diese=-1$.
The following assertions are equivalent:
\begin{enumerate}
\item $[\Omega,\Omega]=0$, where $[\cdot,\cdot]$ stands for the Schouten bracket on $\Gamma(\wedge^{\bullet}L_{J})$,
\item $d_{L_{J}^*}\Omega=0,$
\item $d_{L_{J}^*}\Omega+\frac{1}{2}[\Omega,\Omega]=0$.
\end{enumerate}
\end{theorem}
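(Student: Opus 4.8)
The plan is to express conditions (1) and (2) in terms of a single tensor and then to play the nondegeneracy of $\Omega$ against the reality structure of $E_{\CC}$. Write $\gamma=d_{L_{J}^*}\Omega\in\Gamma(\wedge^{3}L_{J})$. Since $\cc{L_{J}}=L_{J}^*$ as Lie algebroids, complex conjugation intertwines $d_{L_{J}^*}$ and $d_{L_{J}}$, so $\cc{\gamma}=d_{L_{J}}\cc{\Omega}\in\Gamma(\wedge^{3}L_{J}^*)$, where $\cc{\Omega}\in\Gamma(\wedge^{2}L_{J}^*)$. By Remark~\ref{Oginvert} the map $\Omega\diese\colon L_{J}^*\to L_{J}$ is an isomorphism with inverse $-\cc{\Omega}\diese$, so $\Omega\diese\cc{\Omega}\diese=-\id_{L_{J}}$ and the induced map $\wedge^{3}\Omega\diese\colon\wedge^{3}L_{J}^*\to\wedge^{3}L_{J}$ is a bundle isomorphism; in particular $\cc{\Omega}$ is, up to sign, the nondegenerate $2$-form on the Lie algebroid $L_{J}$ inverse to the bivector $\Omega$.

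The heart of the argument is the identity
\begin{equation}\label{eq:plankey}
\tfrac{1}{2}[\Omega,\Omega]=\epsilon\,(\wedge^{3}\Omega\diese)\,\cc{\gamma},
\end{equation}
for a fixed nonzero real constant $\epsilon$. This is precisely the symplectic--Poisson correspondence for the Lie algebroid $L_{J}$: for a nondegenerate bivector the Schouten square $[\Omega,\Omega]$ is recovered from the differential $d_{L_{J}}\cc{\Omega}=\cc{\gamma}$ of the inverse form by applying $\Omega\diese$ in each argument. I expect \emph{establishing~\eqref{eq:plankey} to be the main obstacle}: it calls for a Koszul-type computation writing $[\Omega,\Omega](\xi,\eta,\zeta)$, for $\xi,\eta,\zeta\in\Gamma(L_{J}^*)$, through the Dorfman brackets of the sections $\Omega\diese\xi,\Omega\diese\eta,\Omega\diese\zeta\in\Gamma(L_{J})$, in the same spirit as the computation carried out in the proof of Lemma~\ref{hololemma}.

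Granting~\eqref{eq:plankey}, the equivalence of (1) and (2) is immediate: as $\wedge^{3}\Omega\diese$ is an isomorphism, $[\Omega,\Omega]=0$ holds if and only if $\cc{\gamma}=0$, i.e.\ if and only if $\gamma=d_{L_{J}^*}\Omega=0$. In particular (1) and (2) together give $d_{L_{J}^*}\Omega+\tfrac12[\Omega,\Omega]=0$, which is (3). It therefore remains only to prove the implication (3)$\Rightarrow$(2).

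For this last step I would combine~\eqref{eq:plankey} with conjugation. Assuming (3) and substituting~\eqref{eq:plankey} yields $\gamma=-\epsilon\,(\wedge^{3}\Omega\diese)\,\cc{\gamma}$. Taking complex conjugates gives $\cc{\gamma}=-\epsilon\,(\wedge^{3}\cc{\Omega}\diese)\,\gamma$, and feeding this back in produces
\[
\gamma=\epsilon^{2}\,\big(\wedge^{3}(\Omega\diese\cc{\Omega}\diese)\big)\,\gamma=\epsilon^{2}\,\big(\wedge^{3}(-\id_{L_{J}})\big)\,\gamma=-\epsilon^{2}\,\gamma,
\]
where the last equality uses $\Omega\diese\cc{\Omega}\diese=-\id_{L_{J}}$ and $(-1)^{3}=-1$. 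Since $1+\epsilon^{2}\neq0$, this forces $\gamma=d_{L_{J}^*}\Omega=0$, which is (2); condition (1) then follows from the already established equivalence (1)$\Leftrightarrow$(2). It is worth noting that it is exactly the \emph{odd} degree $3$, through the sign $(-1)^{3}=-1$, together with the nondegeneracy of $\Omega$, that makes the Maurer--Cartan equation~(3) collapse to $d_{L_{J}^*}\Omega=0$; in even degree the same manipulation would be vacuous.
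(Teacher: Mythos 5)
Your proposal is correct in architecture, and its first half coincides with the paper's: your key identity $\tfrac12[\Omega,\Omega]=\epsilon\,(\wedge^3\Omega\diese)\,\cc{d_{L_J^*}\Omega}$ is exactly Lemma~\ref{EqvOmegal1} (with $\epsilon=1$), which the paper obtains not by a bare Koszul computation but from the Lie bialgebroid formula for $(\tfrac12[\Omega,\Omega])\diese$ quoted from Liu--Weinstein--Xu (Lemma~\ref{EqvOmega3}), combined with Lemma~\ref{screwcap}. You leave this identity unproved; it is the one substantive computation in the theorem, so on exactly the point you flag your proposal is a plan rather than a proof --- but the identity is true, and the deduction of $(1)\Leftrightarrow(2)$ from it via the invertibility of $\Omega\diese\colon L_J^*\to L_J$ is the same as the paper's. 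Where you genuinely diverge is $(3)\Rightarrow(2)$. The paper identifies the Maurer--Cartan equation with the involutivity of $(1+\Omega\diese)L_J^*=L_K^*$, i.e.\ with $\mathcal{N}(K,K)=0$ (Lemma~\ref{EqvOmegal2}), and then invokes the hypercomplex machinery of Theorems~\ref{eqv} and~\ref{hyper-holosym} to conclude. Your conjugation trick --- feeding the conjugate of $\gamma=-\epsilon(\wedge^3\Omega\diese)\cc{\gamma}$ back into itself and using $\cc{\Omega}\diese\Omega\diese=-\id_{L_J^*}$ together with the odd degree $3$ to force $(1+\epsilon\cc{\epsilon})\gamma=0$ --- is a genuinely different, shorter, self-contained argument that bypasses $K$, Lemma~\ref{EqvOmegal2}, and Theorem~\ref{eqv} entirely, and it is sound (it needs only $\epsilon\cc{\epsilon}\neq-1$, automatic for any constant; the slight abuse in writing the composite as $\wedge^3(\Omega\diese\cc{\Omega}\diese)$ rather than $\wedge^3(\cc{\Omega}\diese\Omega\diese)$ is harmless since both are $-\id$ in degree $3$). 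What the paper's longer route buys is the geometric content of Lemma~\ref{EqvOmegal2} --- condition (3) as integrability of the deformed complex structure $K$ --- which it reuses in Lemma~\ref{deformation} and the $S^2$-family of complex structures; what your route buys is economy and independence from the hypercomplex equivalence theorem.
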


Theorem~\ref{EqvOmega} follows from Theorem~\ref{hyper-holosym}, Theorem~\ref{eqv},
 Lemma \ref{EqvOmegal1}, and Lemma \ref{EqvOmegal2}.

To prove  Lemma \ref{EqvOmegal1} and Lemma \ref{EqvOmegal2}, we need the following lemma, which is an application of Theorem 6.1 and Equations (23) and (24) in \cite{L-W-X} in the case of the Lie bialgebroid $(L_J, L_J^*)$. 
\begin{lemma}\label{EqvOmega3}
Let $L_J$ and $L_{J}^*$ denote the eigenbundles of a complex structure $J$
on a Courant algebroid $(E,\rho,\langle,\rangle,\circ)$, and let $\Omega$ be a section of
$\wedge^2 L_{J}$:
\begin{enumerate}
\item The subbundle $(1+\Omega\diese)L_{J}^{*}$ of $E_{\CC}$ is involutive if and only if $d_{L^{*}_{J}}\Omega+\frac{1}{2}[\Omega,\Omega]=0$.
\item For all $\xi,\eta\in\Gamma(L_{J}^{*})$, we have $(\frac{1}{2}[\Omega,\Omega])\diese(\xi,\eta)=\Omega^{\sharp}(\mathcal{L}_{\Omega^{\sharp}\xi}\eta-\mathcal{L}_{\Omega^{\sharp}\eta}\xi
+d_{L_{J}}\langle\xi,\Omega^{\sharp}\eta\rangle)-[\Omega^{\sharp}\xi,\Omega^{\sharp}\eta]$.
\end{enumerate}
\end{lemma}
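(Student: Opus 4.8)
The plan is to prove Lemma~\ref{EqvOmega3} by specializing the general theory of Lie bialgebroids developed in~\cite{L-W-X} to the particular bialgebroid $(L_J,L_J^*)$. I would first recall the setup: for a Lie bialgebroid $(A,A^*)$ and a section $\Lambda\in\Gamma(\wedge^2 A)$, Theorem 6.1 of~\cite{L-W-X} characterizes when the graph-type subbundle $(1+\Lambda\diese)A^*$ of the double $A\oplus A^*$ is a Dirac structure, and Equations (23)--(24) there express the relevant bracket and differential operators. Here $A=L_J$, $A^*=L_J^*$, and the double is $E_{\CC}=L_J\oplus L_J^*$, so $\Omega$ plays the role of $\Lambda$. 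Part~(1) then follows by directly quoting the cited result: the twisted subbundle $(1+\Omega\diese)L_J^*$ is involutive (closed under the Courant/Dorfman bracket) precisely when the Maurer--Cartan type condition $d_{L_J^*}\Omega+\frac12[\Omega,\Omega]=0$ holds, where $[\cdot,\cdot]$ is the Schouten bracket on $\Gamma(\wedge^\bullet L_J)$ induced by the Lie algebroid $L_J$ and $d_{L_J^*}$ is the differential of the dual Lie algebroid $L_J^*$.

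For part~(2), I would compute the contraction $\bigl(\tfrac12[\Omega,\Omega]\bigr)\diese(\xi,\eta)$ for $\xi,\eta\in\Gamma(L_J^*)$ using the explicit formula for the Schouten bracket of a bivector with itself. The standard identity for a bivector $\Omega$ on a Lie algebroid gives $\tfrac12[\Omega,\Omega]$ as the operator whose contraction against two covectors $\xi,\eta$ reproduces the Jacobiator of the bracket that $\Omega$ would induce; concretely, one gets $\Omega^\sharp\bigl(\mathcal{L}_{\Omega^\sharp\xi}\eta-\mathcal{L}_{\Omega^\sharp\eta}\xi+d_{L_J}\langle\xi,\Omega^\sharp\eta\rangle\bigr)-[\Omega^\sharp\xi,\Omega^\sharp\eta]$. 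The derivation unwinds the definition of the Schouten bracket, applies Cartan-type formulas relating $\mathcal{L}$, $\iota$, and $d_{L_J}$ on the Lie algebroid $L_J$, and collects terms; this is precisely the content of Equations (23) and (24) of~\cite{L-W-X} rewritten in the present notation.

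The main obstacle is purely bookkeeping: matching the sign and normalization conventions of~\cite{L-W-X} to those used in this paper, particularly the factor $\tfrac12$ in the Schouten bracket and the identification $\cc{L}_J\cong L_J^*$ via the pairing. I would therefore state the lemma as a direct transcription of the cited results and verify part~(2) by the Schouten-bracket computation, taking care that the Lie bracket $[\Omega^\sharp\xi,\Omega^\sharp\eta]$ on the right-hand side is the bracket of sections of $L_J$ (so that the whole expression lands in $\Gamma(L_J)$ and hence defines a section of $\wedge^2 L_J$ after skew-symmetrization). Once the conventions are aligned, no genuinely new argument is required, so I would keep the proof short, citing~\cite{L-W-X} for part~(1) and supplying only the brief algebraic identity behind part~(2).
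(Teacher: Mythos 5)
Your proposal matches the paper's treatment exactly: the paper gives no independent proof of this lemma, stating only that it is ``an application of Theorem 6.1 and Equations (23) and (24) in \cite{L-W-X} in the case of the Lie bialgebroid $(L_J,L_J^*)$,'' which is precisely the specialization you describe. Your additional remarks on verifying part (2) via the Schouten-bracket identity and on matching sign conventions are reasonable elaborations but do not constitute a different route.
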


\begin{lemma}\label{EqvOmegal1}
Given the same hypothesis as in Theorem~\ref{EqvOmega}, we have
\begin{equation*}
\frac{1}{2}[\Omega,\Omega](\xi,\eta,\zeta)=\cc{d_{L_{J}^{*}}\Omega}(\Omega\diese\xi,\Omega\diese\eta,\Omega\diese\zeta),\quad \forall \xi,\eta,\zeta\in\Gamma(L_{J}^{*}) .
\end{equation*}
\end{lemma}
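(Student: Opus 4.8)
The plan is to expand both sides of the claimed identity into one and the same explicit combination of anchors and $L_J$-brackets of the sections $\Omega\diese\xi,\Omega\diese\eta,\Omega\diese\zeta\in\Gamma(L_J)$, and then read off the equality. Throughout I write $a=\Omega\diese\xi$, $b=\Omega\diese\eta$, $c=\Omega\diese\zeta$, all sections of $L_J$. The key algebraic input is the nondegeneracy hypothesis, which by Remark~\ref{Oginvert} says that $-\cc{\Omega}\diese$ inverts $\Omega\diese$, i.e.\ $\cc{\Omega}\diese\Omega\diese=-\id_{L_J^*}$ and $\Omega\diese\cc{\Omega}\diese=-\id_{L_J}$. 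From this and the skew-symmetry $\langle\Omega\diese u,v\rangle=-\langle u,\Omega\diese v\rangle$ I would first record the two contraction identities $\cc{\Omega}(\Omega\diese\alpha,\Omega\diese\beta)=\Omega(\alpha,\beta)$ and $\cc{\Omega}(u,\Omega\diese\zeta)=\langle\zeta,u\rangle$, valid for $u\in\Gamma(L_J)$ and $\alpha,\beta,\zeta\in\Gamma(L_J^*)$; these are exactly what turns the pullback of $\cc{\Omega}$ under $\Omega\diese$ back into $\Omega$.

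For the right-hand side I would first observe that complex conjugation intertwines the two Lie algebroid differentials of the bialgebroid $(L_J,L_J^*)$: since the Dorfman bracket is real and conjugation swaps $L_J\leftrightarrow L_J^*=\cc{L_J}$, one has $\cc{d_{L_J^*}\Omega}=d_{L_J}\cc{\Omega}$, a genuine $3$-form on the Lie algebroid $L_J$. Applying the Cartan (Koszul) formula for $d_{L_J}$ to the $2$-form $\cc{\Omega}$ on the arguments $a,b,c$ and simplifying each term with the two contraction identities above collapses $\cc{d_{L_J^*}\Omega}(a,b,c)$ to the six-term expression
\[
\rho(a)\Omega(\eta,\zeta)-\rho(b)\Omega(\xi,\zeta)+\rho(c)\Omega(\xi,\eta)
+\langle\eta,[a,c]\rangle-\langle\xi,[b,c]\rangle-\langle\zeta,[a,b]\rangle,
\]
where $[\cdot,\cdot]$ is the bracket of $L_J$.

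For the left-hand side I would invoke Lemma~\ref{EqvOmega3}(2), pair its value $(\tfrac12[\Omega,\Omega])\diese(\xi,\eta)$ with $\zeta$ under the convention $\tfrac12[\Omega,\Omega](\xi,\eta,\zeta)=\langle(\tfrac12[\Omega,\Omega])\diese(\xi,\eta),\zeta\rangle$, and expand the two Lie-derivative terms via the coadjoint formula $\langle\mathcal{L}_a\eta,b\rangle=\rho(a)\langle\eta,b\rangle-\langle\eta,[a,b]\rangle$ together with the skew-symmetry of $\Omega\diese$. The term $-[\Omega\diese\xi,\Omega\diese\eta]$ pairs with $\zeta$ to give $-\langle\zeta,[a,b]\rangle$; the $d_{L_J}$-term produces $\rho(c)\Omega(\xi,\eta)$; and the two Lie-derivative terms produce the remaining anchor contributions $\rho(a)\Omega(\eta,\zeta)-\rho(b)\Omega(\xi,\zeta)$ together with $\langle\eta,[a,c]\rangle$ and $-\langle\xi,[b,c]\rangle$. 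This is precisely the six-term expression above, so the two sides agree.

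The computations are mechanical once the framework is fixed; the only real obstacle is bookkeeping, and I would flag two points. First, one must pin down the normalization relating the trivector $\tfrac12[\Omega,\Omega]$ to the contraction $(\tfrac12[\Omega,\Omega])\diese$ furnished by Lemma~\ref{EqvOmega3}(2), since an inconsistent convention would introduce a spurious factor of $2$ and destroy the match. Second, one must feed the inversion relation $\Omega\diese\cc{\Omega}\diese=-\id_{L_J}$ into the correct slot at each step, as it is the single identity responsible both for the appearance of $\Omega$ rather than $\cc{\Omega}$ on the right-hand side and for the cancellation of all mixed terms; a lone sign error here propagates through every line.
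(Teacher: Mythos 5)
Your proposal is correct and follows essentially the same route as the paper: both arguments start from Lemma~\ref{EqvOmega3}(2) for the left-hand side, use the nondegeneracy relation $\Omega\diese\cc{\Omega}\diese=-\id_{L_J}$ to convert contractions against $\cc{\Omega}$ back into $\Omega$ and duality pairings, and identify the result with the Koszul expansion of $d_{L_J}\cc{\Omega}=\cc{d_{L_J^*}\Omega}$ evaluated on $\Omega\diese\xi,\Omega\diese\eta,\Omega\diese\zeta$. The only cosmetic difference is that the paper substitutes $\xi=-\cc{\Omega}\diese X$ and massages the left-hand side directly into $d_{L_J}\cc{\Omega}(X,Y,Z)$, whereas you expand both sides to a common six-term intermediate expression; the computations coincide.
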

\begin{proof}
According to Lemma \ref{EqvOmega3}, for all $ \xi,\eta,\zeta\in\Gamma(L_{J}^{*})$, we have
\[\frac{1}{2}[\Omega,\Omega](\xi,\eta,\zeta)=-\langle\mathcal{L}_{\Omega\diese\xi}\eta-\mathcal{L}_{\Omega\diese\eta}\xi+d_{L_{J}}\langle\xi,
\Omega\diese\eta\rangle,\Omega\diese\zeta\rangle-\langle[\Omega\diese\xi,\Omega\diese\eta],\zeta\rangle.\]
Set $X=\Omega\diese\xi$, $Y=\Omega\diese\eta$, and $Z=\Omega\diese\zeta$. Then we have $X,Y,Z\in\Gamma(L_{J})$. And,  by Lemma \ref{screwcap}, we have $\xi=-\cc{\Omega}\diese X, \eta=-\cc{\Omega}\diese Y, \zeta=-\cc{\Omega}\diese Z$. Moreover,
\begin{align*}
& \frac{1}{2}[\Omega,\Omega](\xi,\eta,\zeta)
=\langle\mathcal{L}_{X}\cc{\Omega}\diese Y-\mathcal{L}_{Y}\cc{\Omega}\diese X+d_{L_J}\langle\cc{\Omega}\diese X,Y\rangle, Z\rangle+\langle [X,Y], \cc{\Omega}\diese Z\rangle\\
=&\rho(X)\cc{\Omega}(Y,Z)-\cc{\Omega}(Y,[X,Z])-\rho(Y)\cc{\Omega}(X,Z)+\cc{\Omega}(X,[Y,Z])
+\rho(Z)\cc{\Omega}(X,Y)-\cc{\Omega}([X,Y],Z)\\
=&d_{L_{J}}\cc{\Omega}(X,Y,Z)=\overline{d_{\cc{L}_{J}}\Omega}(X,Y,Z) \\
=&\overline{d_{L_{J}^{*}}\Omega}(\Omega\diese\xi,\Omega\diese\eta,\Omega\diese\zeta).
\end{align*}
This completes the proof.
\end{proof}

\begin{lemma}\label{EqvOmegal2}
Given the same hypothesis as in Theorem~\ref{EqvOmega}, set $K=(-i)(\Omega\diese-\cc{\Omega}\diese)$. We have
 \[ \mathcal{N}(K,K)=0 \qquad  \text{if and only if} \qquad d_{L_{J}^*}\Omega+\tfrac{1}{2}[\Omega,\Omega]=0 .\]
\end{lemma}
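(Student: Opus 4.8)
The plan is to reduce the vanishing of $\mathcal{N}(K,K)$ to the involutivity of a subbundle of the form $(1+\Omega\diese)L_{J}^{*}$, so that Lemma~\ref{EqvOmega3}(1) applies verbatim. Since Lemma~\ref{algcond}(2) guarantees that $(I,J,K)$ is an almost hypercomplex structure, $K$ is in particular an almost complex structure on $E$; hence, as noted after the definition of a complex structure on a Courant algebroid, $\mathcal{N}(K,K)=0$ if and only if its $\sqrt{-1}$-eigenbundle $L_{K}$ is involutive. Because the Dorfman bracket is real (so complex conjugation is a bracket automorphism of $E_{\CC}$), $L_{K}$ is involutive if and only if its conjugate $\cc{L_{K}}=L_{K}^{*}$ is involutive. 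I will therefore show that $L_{K}^{*}=(1+\Omega\diese)L_{J}^{*}$ and then invoke Lemma~\ref{EqvOmega3}(1).

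The key step is the explicit identification of $L_{K}^{*}$. For $\xi\in\Gamma(L_{J}^{*})$ we have $\Omega\diese\xi\in\Gamma(L_{J})$ and $\cc{\Omega}\diese\xi=0$, so $K\xi=(-i)(\Omega\diese-\cc{\Omega}\diese)\xi=-i\,\Omega\diese\xi$. Likewise, since $\Omega\diese\xi\in\Gamma(L_{J})$ and $\Omega\diese$ vanishes on $L_{J}$, we get $K(\Omega\diese\xi)=i\,\cc{\Omega}\diese\Omega\diese\xi$; restricting the nondegeneracy hypothesis $\Omega\diese\cc{\Omega}\diese+\cc{\Omega}\diese\Omega\diese=-\id_{E_{\CC}}$ to $L_{J}^{*}$ (where $\cc{\Omega}\diese$ vanishes) yields $\cc{\Omega}\diese\Omega\diese\xi=-\xi$, whence $K(\Omega\diese\xi)=-i\,\xi$. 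Adding the two computations gives $K(\xi+\Omega\diese\xi)=-i(\xi+\Omega\diese\xi)$, so $(1+\Omega\diese)\xi$ lies in the $(-\sqrt{-1})$-eigenbundle $\cc{L_{K}}=L_{K}^{*}$. As $\xi\mapsto\xi+\Omega\diese\xi$ is injective on $L_{J}^{*}$ and $\operatorname{rank}L_{J}^{*}=\operatorname{rank}L_{K}^{*}$, this forces $L_{K}^{*}=(1+\Omega\diese)L_{J}^{*}$.

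Combining these, $\mathcal{N}(K,K)=0$ is equivalent to the involutivity of $(1+\Omega\diese)L_{J}^{*}$, which by Lemma~\ref{EqvOmega3}(1) holds if and only if $d_{L_{J}^{*}}\Omega+\tfrac{1}{2}[\Omega,\Omega]=0$, exactly the asserted equivalence. The substantive analytic content is packaged into Lemma~\ref{EqvOmega3}(1), so the only real work here is the eigenbundle bookkeeping; the main point to get right is the sign and normalization in $K(\Omega\diese\xi)=-i\,\xi$, which relies essentially on the nondegeneracy condition, together with the (harmless) passage to the conjugate subbundle $\cc{L_{K}}$ justified by the reality of the Dorfman bracket. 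I do not anticipate a genuine obstacle beyond this routine verification.
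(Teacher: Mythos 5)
Your proposal is correct and follows essentially the same route as the paper: identify $L_{K}^{*}=(1+\Omega\diese)L_{J}^{*}$, note that $\mathcal{N}(K,K)=0$ is equivalent to involutivity of this subbundle, and conclude by Lemma~\ref{EqvOmega3}(1). The only cosmetic difference is that you verify the identification by computing the eigenvalue of $K$ on $\xi+\Omega\diese\xi$ directly, whereas the paper cites the isomorphism $L_{J}^{*}\xto{\frac{1+I}{\sqrt{2}}}L_{K}^{*}$ together with Lemma~\ref{screwcap}; both verifications are sound.
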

\begin{proof}
By assumption, we have $\mathcal{N}(J,J)=0$.
By Lemma \ref{EqvOmega3}, the subbundle
\[ (1+\Omega\diese)L_{J}^*=\{\xi+\Omega\diese\xi\in E_{\CC}|\xi\in L_{J}^* \} \]
of $E_{\CC}$ is involutive if and only if $d_{L_{J}^*}\Omega+\frac{1}{2}[\Omega,\Omega]=0$.
Since $L_{K}^*=\frac{1+I}{\sqrt{2}}L_{J}^*$ and $I\xi=\Omega\diese\xi$ for all $\xi\in L_{J}^*$ (see Lemma~\ref{screwcap}),
we have $L_{K}^*=(1+\Omega\diese)L_{J}^*$.
Since the involutivity of $L_{K}^*$ is equivalent to the vanishing of the Nijenhuis concomitant
$\mathcal{N}(K,K)$, the result follows.
\end{proof}

A complex structure $J$ on Courant algebroid $E$ is equivalent to a decomposition of $E_{\CC}$ as a direct sum
$L\oplus\cc{L}$ of complex conjugate, maximal isotropic, involutive subbundles, namely the eigenbundles of $J$
relative to the eigenvalues $\pm\sqrt{-1}$ (see~\cite{Gualtieri}).
\begin{lemma}\label{deformation}
Let $L_{J},L_{J}^*$ be the eigenbundles of a complex structure $J$ on a Courant algebroid $E$.
Given $\Omega\in\Gamma(\wedge^{2}L_{J})$,
the subbundle $(1+\Omega\diese)L_{J}^*$ is the eigenbundle of a complex structure on $E$ if and only if
$d_{L_{J}^*}\Omega+\frac{1}{2}[\Omega,\Omega]=0$
and $\cc{\Omega}\diese\Omega\diese-\id_{L_{J}^*}$ is an invertible endomorphism of $L_{J}^*$.
\end{lemma}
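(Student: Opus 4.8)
The plan is to use the characterization of complex structures recalled just above the statement: a subbundle $L\subset E_{\CC}$ is the eigenbundle of a complex structure on $E$ precisely when $L$ is maximal isotropic, involutive, and satisfies $E_{\CC}=L\oplus\cc{L}$. Setting $L=(1+\Omega\diese)L_{J}^*$, I would verify these three properties in turn and match them to the two conditions in the statement. I expect maximal isotropy to hold unconditionally, involutivity to be exactly the condition $d_{L_{J}^*}\Omega+\frac{1}{2}[\Omega,\Omega]=0$, and the direct-sum condition to be exactly the invertibility of $\cc{\Omega}\diese\Omega\diese-\id_{L_{J}^*}$.

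For maximal isotropy I would argue directly. Since $\Omega\in\Gamma(\wedge^2 L_{J})$, the endomorphism $\Omega\diese$ kills $L_{J}$ and maps $L_{J}^*$ into $L_{J}$. Expanding $\langle\xi+\Omega\diese\xi,\eta+\Omega\diese\eta\rangle$ for $\xi,\eta\in\Gamma(L_{J}^*)$, the two ``diagonal'' terms vanish by isotropy of $L_{J}^*$ and of $L_{J}$, while the two cross terms cancel by the skew-symmetry of $\Omega\diese$; since $\xi\mapsto\xi+\Omega\diese\xi$ is injective (its $L_{J}^*$-component is $\xi$), the subbundle $L$ has the same rank as $L_{J}^*$ and is therefore maximal isotropic, with no constraint on $\Omega$. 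For involutivity I would simply invoke Lemma~\ref{EqvOmega3}(1), which asserts that $(1+\Omega\diese)L_{J}^*$ is involutive if and only if $d_{L_{J}^*}\Omega+\frac{1}{2}[\Omega,\Omega]=0$; this delivers the first condition at once.

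The substance of the argument is the direct-sum condition, which I expect to be the main obstacle. First I would identify the conjugate bundle: conjugating $L$ and using $\cc{L_{J}^*}=L_{J}$ together with $\cc{\Omega\diese}=\cc{\Omega}\diese$ gives $\cc{L}=(1+\cc{\Omega}\diese)L_{J}$. To compute $L\cap\cc{L}$, I would set $\xi+\Omega\diese\xi=X+\cc{\Omega}\diese X$ with $\xi\in L_{J}^*$ and $X\in L_{J}$, and compare the $L_{J}^*$- and $L_{J}$-components (using $\Omega\diese\xi\in L_{J}$ and $\cc{\Omega}\diese X\in L_{J}^*$) to obtain $\xi=\cc{\Omega}\diese X$ and $X=\Omega\diese\xi$, hence $(\cc{\Omega}\diese\Omega\diese-\id_{L_{J}^*})\xi=0$. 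Thus $L\cap\cc{L}=0$ if and only if this endomorphism is fiberwise injective, which for an endomorphism of the finite-rank bundle $L_{J}^*$ is the same as being invertible; and since $L$ and $\cc{L}$ are both maximal isotropic, a rank count then upgrades $L\cap\cc{L}=0$ to $E_{\CC}=L\oplus\cc{L}$. Combining the three points yields the stated equivalence. The delicate part is precisely this last paragraph---correctly computing $\cc{L}$ and the intersection, and keeping track that fiberwise injectivity, fiberwise bijectivity, and bundle invertibility of $\cc{\Omega}\diese\Omega\diese-\id_{L_{J}^*}$ all coincide---whereas the first two properties are essentially formal once Lemma~\ref{EqvOmega3} is available.
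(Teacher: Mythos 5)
Your proposal is correct and is precisely the argument the paper intends: the paper states this lemma without proof, relying on the immediately preceding characterization of complex structures as decompositions $E_{\CC}=L\oplus\cc{L}$ into conjugate, maximal isotropic, involutive subbundles together with Lemma~\ref{EqvOmega3}(1), and your verification of maximal isotropy, involutivity, and the transversality computation $L\cap\cc{L}=\ker(\cc{\Omega}\diese\Omega\diese-\id_{L_{J}^*})$ fills in exactly the details left implicit.
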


A holomorphic symplectic 2-form on a Courant algebroid can be interpreted in the light of Lemma~\ref{deformation} as
a deformation of the given complex structure into a 2-dimensional sphere of complex structures (see also Proposition~\ref{S2}).
\begin{proposition}
Let $J$ be a complex structure on a Courant algebroid $E$ and let $\Omega\in\Gamma(\wedge^2 L_{J})$
be a holomorphic symplectic structure on $E$ relative to $J$.
Set $I=\Omega\diese+\cc{\Omega}\diese$ and $K=-\sqrt{-1}(\Omega\diese-\cc{\Omega}\diese)$.
Then, for any $a,b\in\mathbb{R}$, the endomorphism
$\frac{1-a^2-b^2}{1+a^2+b^2}J+\frac{2a}{1+a^2+b^2}K+\frac{2b}{1+a^2+b^2}I$
is a complex structure on $E$ and $(1+(a+b\sqrt{-1})\Omega\diese)L_{J}^{*}$ is the subbundle
associated with its eigenvalue $-\sqrt{-1}$.
\end{proposition}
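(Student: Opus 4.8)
The plan is to reduce both assertions to results already in place. For the claim that the given endomorphism is a complex structure, I would first invoke Theorem~\ref{hyper-holosym}(2): since $\Omega$ is a holomorphic symplectic structure relative to $J$, the triple $(I,J,K)$ with $I=\Omega\diese+\cc{\Omega}\diese$ and $K=-\sqrt{-1}(\Omega\diese-\cc{\Omega}\diese)$ is a genuine hypercomplex structure on $E$. Then Proposition~\ref{S2} guarantees that any real combination $\lambda_{1}I+\lambda_{2}J+\lambda_{3}K$ with $\lambda_{1}^{2}+\lambda_{2}^{2}+\lambda_{3}^{2}=1$ is a complex structure. It therefore suffices to check that the coefficients
\[ \lambda_{1}=\frac{2b}{1+a^{2}+b^{2}},\qquad \lambda_{2}=\frac{1-a^{2}-b^{2}}{1+a^{2}+b^{2}},\qquad \lambda_{3}=\frac{2a}{1+a^{2}+b^{2}} \]
are a point of the unit sphere. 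Writing $s=a^{2}+b^{2}$, this reduces to the stereographic identity $(1-s)^{2}+4s=(1+s)^{2}$, which is immediate. This settles the first assertion.

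For the eigenbundle identification, write $J'$ for this complex structure and $c=a+b\sqrt{-1}$, and take a section of the form $v=\xi+c\,\Omega\diese\xi$ with $\xi\in\Gamma(L_{J}^{*})$. The goal is $J'v=-\sqrt{-1}\,v$. The computational inputs are Lemma~\ref{screwcap} and Remark~\ref{Oginvert}. For $\xi\in L_{J}^{*}$ one has $\Omega\diese\xi=I\xi=\sqrt{-1}K\xi\in L_{J}$ and $\cc{\Omega}\diese\xi=0$, whereas for $\eta:=\Omega\diese\xi\in L_{J}$ one has $\Omega\diese\eta=0$, $\cc{\Omega}\diese\eta=\cc{\Omega}\diese\Omega\diese\xi=-\xi$ (the nondegeneracy of Remark~\ref{Oginvert}), together with $J\xi=-\sqrt{-1}\,\xi$ and $J\eta=\sqrt{-1}\,\eta$. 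Expanding $I$, $J$, $K$ on $\xi$ and on $\eta$ in the frame $\{\xi,\eta\}$ and then forming $J'v=J'\xi+c\,J'\eta$, the coefficient of $\xi$ becomes $-\sqrt{-1}\lambda_{2}-c(\lambda_{1}+\sqrt{-1}\lambda_{3})$ and the coefficient of $\eta$ becomes $(\lambda_{1}-\sqrt{-1}\lambda_{3})+c\sqrt{-1}\lambda_{2}$. Using the simplifications $(a+b\sqrt{-1})(2b+2a\sqrt{-1})=2s\sqrt{-1}$ and $(a+b\sqrt{-1})\sqrt{-1}(1-s)=(-b+a\sqrt{-1})(1-s)$, the first collapses to $-\sqrt{-1}$ and the second to $-\sqrt{-1}\,c$, so that $J'v=-\sqrt{-1}\,v$. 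Hence $(1+c\,\Omega\diese)L_{J}^{*}$ lies inside the $(-\sqrt{-1})$-eigenbundle of $J'$.

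Finally, I would promote this inclusion to an equality by a rank count. The restriction of $1+c\,\Omega\diese$ to $L_{J}^{*}$ is injective, since $\Omega\diese\xi\in L_{J}$ forces the $L_{J}^{*}$-component of $(1+c\,\Omega\diese)\xi$ to be $\xi$; thus $(1+c\,\Omega\diese)L_{J}^{*}$ has the same rank as $L_{J}^{*}$, namely half the rank of $E_{\CC}$. Because $J'$ is a complex structure, its $(-\sqrt{-1})$-eigenbundle is also maximal isotropic of that same rank, and a full-rank subbundle of it must coincide with it. This completes the identification.

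I expect the main obstacle to be the middle step: keeping the bookkeeping of Lemma~\ref{screwcap} straight when shuttling between $L_{J}^{*}$ and $L_{J}$, and carrying out the two complex-coefficient simplifications so that the $\xi$- and $\eta$-coefficients reduce cleanly to $-\sqrt{-1}$ and $-\sqrt{-1}\,c$. The first and last paragraphs are formal once the stereographic identity and the injectivity have been recorded.
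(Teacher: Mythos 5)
Your proof is correct, and for the first assertion it coincides with the paper's: both reduce to Theorem~\ref{hyper-holosym}(2) plus Proposition~\ref{S2} via the stereographic identity $(1-s)^2+4s=(1+s)^2$. For the eigenbundle identification, however, you take a genuinely different route. The paper conjugates: it verifies the quaternionic identity
\[
\Bigl(\tfrac{1+aI-bK}{\sqrt{1+a^{2}+b^{2}}}\Bigr)\,J\,\Bigl(\tfrac{1-aI+bK}{\sqrt{1+a^{2}+b^{2}}}\Bigr)
=\tfrac{1-a^2-b^2}{1+a^2+b^2}J+\tfrac{2a}{1+a^2+b^2}K+\tfrac{2b}{1+a^2+b^2}I ,
\]
notes that the two factors flanking $J$ are mutually inverse, and concludes at once that $\frac{1+aI-bK}{\sqrt{1+a^{2}+b^{2}}}$ carries $L_J^*$ isomorphically onto the $(-\sqrt{-1})$-eigenbundle of the new operator; Lemma~\ref{screwcap} then converts $(1+aI-bK)\xi$ into $(1+(a+b\sqrt{-1})\Omega\diese)\xi$. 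You instead compute the action of $I$, $J$, $K$ directly in the frame $\{\xi,\Omega\diese\xi\}$ and check that $\xi+c\,\Omega\diese\xi$ is an eigenvector (your two coefficient simplifications are right: they reduce to $-\sqrt{-1}$ and $b-a\sqrt{-1}=-\sqrt{-1}c$), after which you need the extra injectivity-plus-rank argument to upgrade the inclusion to an equality. The conjugation argument buys you surjectivity for free and scales to the whole $2$-sphere of complex structures in one identity; your computation is more elementary and makes the eigenvector completely explicit, at the cost of the final dimension count --- which you do supply correctly, since $\Omega\diese\xi\in L_J$ forces the $L_J^*$-component of $(1+c\,\Omega\diese)\xi$ to be $\xi$, and the $(-\sqrt{-1})$-eigenbundle of any complex structure has half the rank of $E_{\CC}$.
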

Note that the map
\[ S^2\ni\left(\frac{1-a^2-b^2}{1+a^2+b^2}, \frac{2a}{1+a^2+b^2}, \frac{2b}{1+a^2+b^2}\right)
\mapsto a+b\sqrt{-1}\in\CC\cup\{\infty\}\]
is the stereographic projection.
\begin{proof}
Set $I=\Omega\diese+\cc{\Omega}\diese$ and $K=\sqrt{-1}(\cc{\Omega}\diese-\Omega\diese)$.
Then $(I,J,K)$ is a hypercomplex structure on the Courant algebroid $E$, and
$\frac{1-a^2-b^2}{1+a^2+b^2}J+\frac{2a}{1+a^2+b^2}K+\frac{2b}{1+a^2+b^2}I$ is a complex structure on $E$
by Proposition~\ref{S2}.
Since
\begin{equation}\label{conjugate}
\left(\frac{1+aI-bK}{\sqrt{1+a^{2}+b^{2}}}\right)J\left(\frac{1-aI+bK}{\sqrt{1+a^{2}+b^{2}}}\right)
=\frac{1-a^2-b^2}{1+a^2+b^2}J+\frac{2a}{1+a^2+b^2}K+\frac{2b}{1+a^2+b^2}I,
\end{equation}
and \[\frac{1-aI+bK}{\sqrt{1+a^{2}+b^{2}}}=\left(\frac{1+aI-bK}{\sqrt{1+a^{2}+b^{2}}}\right)^{-1},\]
it follows that $J$ and $\frac{1-a^2-b^2}{1+a^2+b^2}J+\frac{2a}{1+a^2+b^2}K+\frac{2b}{1+a^2+b^2}I$
are conjugate endomorphisms of the Courant algebroid $E$ and that $\frac{1+aI-bK}{\sqrt{1+a^{2}+b^{2}}}$ maps $L_{J}^{*}$
to the $-\sqrt{-1}$ eigenbundle of $\frac{1-a^2-b^2}{1+a^2+b^2}J+\frac{2a}{1+a^2+b^2}K+\frac{2b}{1+a^2+b^2}I$.

On the other hand, by Lemma~\ref{screwcap}, we have
\[(1+(a+b\sqrt{-1})\Omega\diese)\xi=\xi+a\Omega\diese\xi+b\sqrt{-1}\Omega\diese\xi=(1+aI-bK)\xi\]
for any element $\xi\in L_{J}^{*}$.

Therefore,  $(1+(a+b\sqrt{-1})\Omega\diese)L_{J}^{*}$ is the eigenbundle of
$\frac{1-a^2-b^2}{1+a^2+b^2}J+\frac{2a}{1+a^2+b^2}K+\frac{2b}{1+a^2+b^2}I$
associated with the eigenvalue $-\sqrt{-1}$.
\end{proof}

\subsection{Holomorphic symplectic structures on $T\oplus T^*$}
Consider the complex structure \[ J=\begin{pmatrix} j & 0 \\ 0 & -j^{*} \end{pmatrix} \]
on the Courant algebroid $T\oplus T^{*}$ associated with a complex manifold with complex structure $j$.
Its eigenbundles are $L_{J}=T^{1,0}\oplus(T^{0,1})^{*}$
and $L_{J}^{*}=T^{0,1}\oplus(T^{1,0})^{*}$.
Assume that $\Omega\in\Gamma(\wedge^2L_{J})$
is a holomorphic symplectic structure on $T\oplus T^{*}$ relatively to $J$.
Since \[ \wedge^{2}L_{J}=\wedge^{2}T^{1,0}\oplus
(T^{1,0}\wedge(T^{0,1})^{*})\oplus\wedge^{2}(T^{0,1})^{*} ,\]
$\Omega$ decomposes as $\Omega=\pi+\theta+\omega$
where $\pi\in\Gamma(\wedge^{2}T^{1,0})$, $\theta\in\Gamma(T^{1,0}\wedge (T^{0,1})^{*})$
and $\omega\in\Gamma(\wedge^{2} (T^{0,1})^{*})$.
Then $\Omega\diese\cc{\Omega}\diese+\cc{\Omega}\diese\Omega\diese=-1$
is equivalent to the following equations:
\begin{align*}
\pi\diese\cc{\theta}\diese+\theta\diese\cc{\pi}\diese&=0; & \pi\diese\cc{\omega}\diese+\theta\diese\cc{\theta}\diese&=-1; \\
\omega\diese\cc{\pi}\diese+\theta\diese\cc{\theta}\diese&=-1; & \omega\diese\cc{\theta}\diese+\theta\diese\cc{\omega}\diese&=0.
\end{align*}
The condition $d_{L_{J}^*}\Omega=0$ is equivalent to
\[ \cc{\partial}\pi=0; \qquad \cc{\partial}\theta=0; \qquad \cc{\partial}\omega=0, \]
as $d_{L_{J}^*}=\bar{\partial}$ in the present context. However, $[\Omega,\Omega]=0$ is equivalent to
\begin{align*} [\pi,\pi]&=0; & [\pi,\theta]&=0; \\
2[\pi,\omega]+[\theta,\theta]&=0; & [\theta,\omega]&=0 .
\end{align*}
As a consequence, $\pi$ is necessarily a holomorphic Poisson structure on $M$.

In conclusion, we have
\begin{proposition}\label{extendedsymplectic}
Given a complex manifold with complex structure $j$, if $\pi$, $\theta$, and $\omega$ are sections of
$\wedge^2 T^{1,0} $, $T^{1,0}\wedge(T^{0,1})^*$, and $\wedge^2(T^{0,1})^*$ respectively, and satisfy the relations
 \[\cc{\partial}\pi=0; \quad\cc{\partial}\theta=0; \quad\cc{\partial}\omega=0,\]
and
\begin{align*}
&\pi\diese\cc{\theta}\diese+\theta\diese\cc{\pi}\diese=0;
\qquad \pi\diese\cc{\omega}\diese+\theta\diese\cc{\theta}\diese=-1; \\
&\omega\diese\cc{\pi}\diese+\theta\diese\cc{\theta}\diese=-1;
\qquad\omega\diese\cc{\theta}\diese+\theta\diese\cc{\omega}\diese=0,
\end{align*}
then $\Omega=\pi+\theta+\omega$ is a holomorphic symplectic structure on $T\oplus T^*$
relative to $J=\big(\begin{smallmatrix} j & 0 \\ 0 & -j^{*} \end{smallmatrix}\big)$, and we have
\begin{align*}
 [\pi,\pi]&=0; & [\pi,\theta]&=0; \\
2[\pi,\omega]+[\theta,\theta]&=0; & [\theta,\omega]&=0 .
\end{align*}
In particular, $\pi$ is necessarily a holomorphic Poisson structure.
\end{proposition}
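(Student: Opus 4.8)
The plan is to read the statement as a repackaging of the equivalences already established in the paragraphs immediately preceding it, with the one substantive new ingredient being the decomposition of $[\Omega,\Omega]$ by bidegree. First I would record that, since $M$ is a complex manifold, $j$ is integrable, so $J=\big(\begin{smallmatrix} j & 0 \\ 0 & -j^{*} \end{smallmatrix}\big)$ is genuinely a complex structure on $T\oplus T^{*}$ (Example~\ref{twistcomplex} with $\phi=0$, or Lemma~\ref{holomorphicPoissonGC} with $\pi_{1}=0$), with eigenbundles $L_{J}=T^{1,0}\oplus(T^{0,1})^{*}$ and $L_{J}^{*}=T^{0,1}\oplus(T^{1,0})^{*}$. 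Writing $\Omega=\pi+\theta+\omega$ under the splitting $\wedge^{2}L_{J}=\wedge^{2}T^{1,0}\oplus(T^{1,0}\wedge(T^{0,1})^{*})\oplus\wedge^{2}(T^{0,1})^{*}$, the four displayed algebraic relations are, by the computation recorded just above the proposition, exactly equivalent to the nondegeneracy condition $\Omega\diese\cc{\Omega}\diese+\cc{\Omega}\diese\Omega\diese=-1$, while the three closedness conditions $\cc{\partial}\pi=\cc{\partial}\theta=\cc{\partial}\omega=0$ are exactly equivalent to $d_{L_{J}^{*}}\Omega=0$ (recall $d_{L_{J}^{*}}=\cc{\partial}$ here). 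Hence $\Omega$ satisfies both defining properties and is a holomorphic symplectic structure relative to $J$.

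For the bracket relations I would invoke Theorem~\ref{EqvOmega}: the nondegeneracy condition being in force, the vanishing $d_{L_{J}^{*}}\Omega=0$ is equivalent to $[\Omega,\Omega]=0$. It then remains to unpack $[\Omega,\Omega]=0$ into components. Since $\pi,\theta,\omega$ are all sections of $\wedge^{2}L_{J}$, the Schouten bracket is symmetric on them, and bilinearity gives $[\Omega,\Omega]=[\pi,\pi]+2[\pi,\theta]+\big(2[\pi,\omega]+[\theta,\theta]\big)+2[\theta,\omega]+[\omega,\omega]$.

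The crux — and the step I expect to carry the real weight — is a bidegree bookkeeping argument. I would grade $\wedge^{\bullet}L_{J}$ by the pair $(\text{number of }T^{1,0}\text{ factors},\ \text{number of }(T^{0,1})^{*}\text{ factors})$, so that $\pi,\theta,\omega$ carry bidegrees $(2,0),(1,1),(0,2)$. The point is to identify the Lie algebroid structure of $L_{J}$: its anchor is the projection onto $T^{1,0}$; the subbundle $T^{1,0}$ is a subalgebroid (closed under the bracket because $j$ is integrable); the subbundle $(T^{0,1})^{*}$ is an abelian ideal; and $T^{1,0}$ acts on $(T^{0,1})^{*}$ by $L_{X}\eta=\iota_{X}d\eta\in(T^{0,1})^{*}$. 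From these three facts one reads off that the Schouten bracket sends bidegrees $(a_{1},b_{1}),(a_{2},b_{2})$ to bidegree $(a_{1}+a_{2}-1,\,b_{1}+b_{2})$, a negative first entry meaning the bracket vanishes. In particular $[\omega,\omega]$ would land in bidegree $(-1,4)$ and so vanishes automatically, whereas $[\pi,\pi]$, $2[\pi,\theta]$, $2[\pi,\omega]+[\theta,\theta]$, and $2[\theta,\omega]$ sit respectively in the distinct summands of bidegrees $(3,0),(2,1),(1,2),(0,3)$ of $\wedge^{3}L_{J}$. As these summands are independent, $[\Omega,\Omega]=0$ forces each of the four to vanish, which are precisely the claimed bracket identities.

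Finally, for the concluding assertion I would note that $\pi$ is a section of $\wedge^{2}T^{1,0}$ satisfying $\cc{\partial}\pi=0$ and $[\pi,\pi]=0$; restricted to $T^{1,0}$ the Schouten bracket on $L_{J}$ is the ordinary Schouten bracket of $(1,0)$-polyvector fields, so these two conditions say exactly that $\pi$ is holomorphic and Poisson, i.e.\ a holomorphic Poisson structure (cf.\ Lemma~\ref{holomorphicPoissonGC}). The main obstacle is therefore not any single computation but the clean verification that the Schouten bracket is compatible with the bidegree on $\wedge^{\bullet}L_{J}$; once that is secured, the remainder is an assembly of results already proved.
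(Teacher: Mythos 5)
Your proposal is correct and follows essentially the same route as the paper, which states this proposition as the conclusion of the immediately preceding discussion: the algebraic relations are identified with the nondegeneracy condition, the $\cc{\partial}$-closedness conditions with $d_{L_{J}^*}\Omega=0$, Theorem~\ref{EqvOmega} supplies $[\Omega,\Omega]=0$, and the bracket identities come from decomposing $[\Omega,\Omega]$ by bidegree in $\wedge^{3}L_{J}$. The only difference is that you spell out the bidegree bookkeeping (via the semidirect-product structure of $L_{J}=T^{1,0}\ltimes(T^{0,1})^{*}$) that the paper leaves implicit, and that verification is correct.
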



An \emph{extended Poisson structure} on a complex manifold (with complex structure $j$)
is an element $\Omega=\pi+\theta+\omega$ of $\wedge^2\big(T^{1,0}\oplus(T^{0,1})^*\big)$
satisfying $\bar{\partial}\Omega+\frac{1}{2}[\Omega,\Omega]=0$ (see~\cite{C-S-X}).
If $\Omega$ satisfies the additional algebraic condition
$\Omega\diese\cc{\Omega}\diese+\cc{\Omega}\diese\Omega\diese=-1$,
then $\Omega$ is holomorphic symplectic w.r.t.\ $J=\big(\begin{smallmatrix} j & 0 \\ 0 & -j^{*} \end{smallmatrix}\big)$
by Theorem~\ref{EqvOmega}.

This motivates the following definition.
\begin{definition}
An \emph{extended symplectic structure} on a complex manifold$(M;j)$  is a holomorphic symplectic structure
on the standard Courant algebroid $T\oplus T^{*}$ relative to the complex structure
$J=\big(\begin{smallmatrix} j & 0\\ 0 &-j^{*}\end{smallmatrix}\big)$.
\end{definition}
\begin{example}\label{exahypercomplexSymp}
If $\omega=\pi=0$, the equations in Proposition~\ref{extendedsymplectic}  become
\[ \bar{\partial}\theta=0 \quad\text{and}\quad \theta\diese\cc{\theta}\diese=-1.\]
Setting $i=\theta\diese+\cc{\theta}\diese$ and
$k=-\sqrt{-1}(\theta\diese-\cc{\theta}\diese)$, we recover a (classical) hypercomplex triple $(i,j,k)$ on the manifold
(see Example~\ref{exampleHyercomplex}).
\end{example}

\begin{example}\label{holomorphicSymplectic}
If $\theta=0$, the equations in Proposition~\ref{extendedsymplectic} become
\[ \pi=-(\cc{\omega})^{-1}, \quad \bar{\partial}\omega=0,
\quad\bar{\partial}\pi=0,
\quad [\pi,\pi]=0 \quad \text{and}\quad[\omega,\pi]=0.\]
Therefore, $\Omega=\omega+\pi$ is a holomorphic symplectic structure on $T\oplus T^{*}$
if and only if $\cc{\omega}=-\pi^{-1}$ is a holomorphic symplectic 2-form.
Thus, we recover a holomorphic symplectic manifold
(see Example~\ref{exampleHoloSymp}).
\end{example}


\subsection{Hyper-Poisson structure}
\begin{definition}[\cite{hyper-Lie-Poisson}]\label{hyperPoissondef}
Let $(i,j,k)$ be a hypercomplex triple on a manifold $M$, and let $\omega_{1},\omega_{2},\omega_{3}$ be three 2-forms
on $M$.  If $\omega_{2}+\sqrt{-1}\omega_{3}$ is a holomorphic symplectic 2-form with respect to the complex structure
$i$, $\omega_{3}+\sqrt{-1}\omega_{1}$ a holomorphic symplectic 2-form with respect to the complex structure $j$,
and $\omega_{1}+\sqrt{-1}\omega_{2}$ a holomorphic symplectic 2-form with respect to the complex structure $k$,
then $(\omega_{1},\omega_{2},\omega_{3})$ is a \emph{hyper-symplectic structure} on $M$ with respect to $(i,j,k)$.
\end{definition}

\begin{remark}
Definition \ref{hyperPoissondef} is invariant under cyclic permutations of $i,j,k$ and $\omega_1,\omega_2,\omega_3$.
\end{remark}

The following definition is a natural generalization of hyper-symplectic structures in the Poisson context.
\begin{definition}
Let $(i,j,k)$ be a hypercomplex triple on a manifold $M$, and let $\pi_{1},\pi_{2},$ and $\pi_{3}$ be three bivector fields on $M$.
If $\pi_{2}-\sqrt{-1}\pi_{3}$ is a holomorphic Poisson tensor with respect to the complex structure $i$,
$\pi_{3}-\sqrt{-1}\pi_{1}$ a holomorphic Poisson tensor with respect to the complex structure $j$,
and $\pi_{1}-\sqrt{-1}\pi_{2}$ a holomorphic Poisson tensor with respect to the complex structure $k$,
then $(\pi_{1},\pi_{2},\pi_{3})$ is  a \emph{hyper-Poisson structure} on $M$ with respect to $(i,j,k)$.
\end{definition}

\begin{theorem}\label{prophyperpoisson}
 The following assertions are equivalent to each other:
\begin{enumerate}
\item The triple $(i,j,k)$ is a hypercomplex structure on $M$ and $(\pi_{1},\pi_{2},\pi_{3})$
is a hyper-Poisson structure with respect to $(i,j,k)$ satisfying $\pi_{2}\diese=-i\pi_{3}\diese=-\pi_{3}\diese i^{*}$.
\item The triple $I=\big(\begin{smallmatrix} i & \pi_{3} \\ 0 & -i^{*} \end{smallmatrix}\big)$,
$J=\big(\begin{smallmatrix} j & 0 \\ 0 & -j^{*} \end{smallmatrix}\big)$,
$K=\big(\begin{smallmatrix} k & -\pi_{1} \\ 0 & -k^{*} \end{smallmatrix}\big)$
is a hypercomplex structure on the standard Courant algebroid $T_M\oplus T^{*}_M$.
\item
Having defined $\theta\in\Gamma(T^{1,0}\wedge(T^{0,1})^{*})$ (relatively to $j$) by $\theta\diese=\frac{1}{2}(i+\sqrt{-1}k)$
and $\pi\in\Gamma(\wedge^{2}T^{1,0})$ (relatively to $j$) by $\pi=\frac{1}{2}(\pi_{3}-\sqrt{-1}\pi_{1})$,
their sum $\Omega=\theta+\pi$ is a holomorphic symplectic structure on the standard Courant algebroid
$T_M\oplus T^{*}_M$ relative to the complex structure $J= \big(\begin{smallmatrix}j & 0 \\ 0 & -j^{*} \end{smallmatrix}\big)$.
\end{enumerate}
\end{theorem}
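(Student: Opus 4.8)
The plan is to read the theorem as the specialization to $E=T_M\oplus T_M^{*}$, with $J=\big(\begin{smallmatrix} j & 0 \\ 0 & -j^{*}\end{smallmatrix}\big)$, of the general correspondence between hypercomplex triples and holomorphic symplectic structures on a Courant algebroid. Concretely, I would prove $(2)\Leftrightarrow(3)$ as an immediate application of Theorem~\ref{hyper-holosym}, and $(1)\Leftrightarrow(2)$ by expanding the block matrices $I,J,K$ and feeding the integrability data into Lemma~\ref{holomorphicPoissonGC}, Example~\ref{twistcomplex}, and Theorem~\ref{eqv}.

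For $(2)\Leftrightarrow(3)$ I would compute the endomorphism $\Omega\diese=\tfrac12(I+\sqrt{-1}K)$ of $(T_M\oplus T_M^{*})_{\CC}$ in block form,
\[ \Omega\diese=\begin{pmatrix} \tfrac12(i+\sqrt{-1}k) & \tfrac12(\pi_3-\sqrt{-1}\pi_1) \\ 0 & -\tfrac12(i^{*}+\sqrt{-1}k^{*}) \end{pmatrix}, \]
and then read off its components against $\wedge^2 L_{J}=\wedge^2 T^{1,0}\oplus(T^{1,0}\wedge(T^{0,1})^{*})\oplus\wedge^2(T^{0,1})^{*}$. The upper-right block is $\pi\diese=\tfrac12(\pi_3-\sqrt{-1}\pi_1)\diese$ (the $\wedge^2 T^{1,0}$-part), the diagonal block is $\theta\diese=\tfrac12(i+\sqrt{-1}k)$ (the $T^{1,0}\wedge(T^{0,1})^{*}$-part), and the vanishing lower-left block forces the $\wedge^2(T^{0,1})^{*}$-part $\omega$ to be zero; hence $\Omega=\theta+\pi$ is exactly the section named in (3). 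Theorem~\ref{hyper-holosym} then gives that $(I,J,K)$ is hypercomplex if and only if $\Omega$ is holomorphic symplectic relative to $J$, the converse direction being the reverse reading of the same theorem, which recovers $I=\Omega\diese+\cc\Omega\diese$ and $K=-\sqrt{-1}(\Omega\diese-\cc\Omega\diese)$ as the stated block matrices.

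For $(1)\Leftrightarrow(2)$ I would first expand the almost-hypercomplex relations in block form: $J^2=-1$ forces $j^2=-1$; $I^2=-1$ is equivalent to $i^2=-1$ together with $i\pi_3\diese=\pi_3\diese i^{*}$; $K^2=-1$ is equivalent to $k^2=-1$ together with $k\pi_1\diese=\pi_1\diese k^{*}$; and $IJK=-1$ encodes $ijk=-1$ plus consistency relations among the $\pi_a$ that hold given the compatibility $\pi_2\diese=-i\pi_3\diese=-\pi_3\diese i^{*}$ assumed in (1) (its cyclic companions being the type conditions implicit in the holomorphic Poisson hypotheses). Thus almost-hypercomplexity of $(I,J,K)$ is equivalent to $(i,j,k)$ being an almost hypercomplex triple on $M$ together with the stated compatibility. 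To reach the integrable level I would apply Theorem~\ref{eqv}, by which $(I,J,K)$ is hypercomplex as soon as $\mathcal{N}(I,I)=\mathcal{N}(J,J)=0$. By Example~\ref{twistcomplex}, $\mathcal{N}(J,J)=0$ means $j$ is integrable; by Lemma~\ref{holomorphicPoissonGC} applied to $I$, $\mathcal{N}(I,I)=0$ means $i$ is integrable and $\pi_3+\sqrt{-1}\pi_2=-\sqrt{-1}(\pi_2-\sqrt{-1}\pi_3)$ is a holomorphic Poisson tensor with respect to $i$, which is the first hyper-Poisson condition. Integrability of $i$ and $j$ upgrades $(i,j,k)$ to a genuine hypercomplex triple, and the remaining two holomorphic Poisson conditions are then read off once $(I,J,K)$ is known to be hypercomplex, so that all its Nijenhuis concomitants vanish by Theorem~\ref{eqv}: applying Lemma~\ref{holomorphicPoissonGC} to $K$ yields the $k$-condition, while Proposition~\ref{extendedsymplectic} forces $\pi=\tfrac12(\pi_3-\sqrt{-1}\pi_1)$ to be holomorphic Poisson with respect to $j$, which is the $j$-condition.

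The main obstacle I anticipate is the sign-and-conjugation bookkeeping in the $(1)\Leftrightarrow(2)$ step, specifically matching the single stated compatibility $\pi_2\diese=-i\pi_3\diese=-\pi_3\diese i^{*}$ and its cyclic analogues to the off-diagonal blocks of $I^2$, $K^2$, and $IJK$. The conceptual crux, however, is recognizing that the three holomorphic Poisson conditions of a hyper-Poisson structure are \emph{not} independent but are bound together by the one hypercomplex structure on $E$ --- exactly as in Theorem~\ref{eqv}, where vanishing of $\mathcal{N}(I,I)$ and $\mathcal{N}(J,J)$ forces all six Nijenhuis tensors to vanish. Once this interdependence is exploited, it suffices to verify the $i$-condition together with integrability of $j$, and the $j$- and $k$-conditions follow through Theorem~\ref{eqv} and Lemma~\ref{holomorphicPoissonGC}; everything else is routine block-matrix algebra and direct citation of the results already established.
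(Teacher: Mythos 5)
Your proposal is correct and follows essentially the same route as the paper: the equivalence $(2)\Leftrightarrow(3)$ via Theorem~\ref{hyper-holosym} and the block decomposition of $\Omega\diese=\tfrac{1}{2}(I+\sqrt{-1}K)$ as in Proposition~\ref{extendedsymplectic}, and $(1)\Leftrightarrow(2)$ by feeding the off-diagonal blocks into Lemma~\ref{holomorphicPoissonGC} and upgrading almost-hypercomplexity to hypercomplexity through Theorem~\ref{eqv} (the paper packages your direct block-matrix check of the quaternionic relations into Corollary~\ref{corollary-hyperPoisson}, and your stray sign in $\sqrt{-1}(\pi_{2}-\sqrt{-1}\pi_{3})=\pi_{3}+\sqrt{-1}\pi_{2}$ is harmless since holomorphic Poisson tensors are stable under complex scaling).
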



In order to prove Theorem~\ref{prophyperpoisson}, we will make use of the following lemma or more precisely a corollary of it.
\begin{lemma}[\cite{holomorphic-Lie-algebrods}]\label{lemma-holomorphicPoisson}
Let $M$ be a complex manifold (with complex structure $j$).
If $\pi_{\lambda}$ and $\pi_{\mu}$ are two real bivector fields on $M$
such that $\pi_{\lambda}+\sqrt{-1}\pi_{\mu}$ is a holomorphic Poisson bivector field,
then $\pi_{\mu}\diese=-j\pi_{\lambda}\diese=-\pi_{\lambda}\diese j^{*}$ and
\[\llbracket\pi_{\lambda},\pi_{\lambda}\rrbracket=\llbracket\pi_{\mu},
\pi_{\mu}\rrbracket=\llbracket\pi_{\lambda},\pi_{\mu}\rrbracket=0. \]
\end{lemma}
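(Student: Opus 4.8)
The plan is to work throughout with the single complex bivector field $\Pi=\pi_{\lambda}+\sqrt{-1}\pi_{\mu}$ and to reduce the whole statement to its holomorphicity. Recall that $\Pi$ being a holomorphic Poisson bivector field means that $\Pi$ is a holomorphic section of $\wedge^{2}T^{1,0}M$ satisfying $\llbracket\Pi,\Pi\rrbracket=0$. First I would record the algebraic (type) identity. Since $\Pi\diese=\pi_{\lambda}\diese+\sqrt{-1}\,\pi_{\mu}\diese$ has image in $T^{1,0}$, we have $j\Pi\diese=\sqrt{-1}\,\Pi\diese$; separating real and imaginary parts gives $j\pi_{\lambda}\diese=-\pi_{\mu}\diese$, that is $\pi_{\mu}\diese=-j\pi_{\lambda}\diese$. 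Dualizing this identity and using that the sharp map of a bivector is skew-symmetric, $(\pi\diese)^{*}=-\pi\diese$, yields the companion equality $\pi_{\mu}\diese=-\pi_{\lambda}\diese j^{*}$. In particular $\pi_{\lambda}$ and $\pi_{\mu}$ have no $(1,1)$-component, and $\Pi,\cc{\Pi}$ recover them through $\pi_{\lambda}=\tfrac{1}{2}(\Pi+\cc{\Pi})$ and $\pi_{\mu}=\tfrac{1}{2\sqrt{-1}}(\Pi-\cc{\Pi})$, where $\cc{\Pi}=\pi_{\lambda}-\sqrt{-1}\pi_{\mu}\in\Gamma(\wedge^{2}T^{0,1}M)$.

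Next I would expand the three real Schouten brackets by bilinearity, using that the Schouten bracket of two bivectors is symmetric and that complex conjugation commutes with it. A direct computation gives
\[\llbracket\pi_{\lambda},\pi_{\lambda}\rrbracket=\tfrac{1}{4}\big(\llbracket\Pi,\Pi\rrbracket+2\llbracket\Pi,\cc{\Pi}\rrbracket+\llbracket\cc{\Pi},\cc{\Pi}\rrbracket\big),\]
\[\llbracket\pi_{\mu},\pi_{\mu}\rrbracket=-\tfrac{1}{4}\big(\llbracket\Pi,\Pi\rrbracket-2\llbracket\Pi,\cc{\Pi}\rrbracket+\llbracket\cc{\Pi},\cc{\Pi}\rrbracket\big),\]
\[\llbracket\pi_{\lambda},\pi_{\mu}\rrbracket=-\tfrac{\sqrt{-1}}{4}\big(\llbracket\Pi,\Pi\rrbracket-\llbracket\cc{\Pi},\cc{\Pi}\rrbracket\big).\]
The hypothesis $\llbracket\Pi,\Pi\rrbracket=0$ together with its conjugate $\llbracket\cc{\Pi},\cc{\Pi}\rrbracket=\cc{\llbracket\Pi,\Pi\rrbracket}=0$ disposes of four of the six terms, so the entire statement reduces to the single identity $\llbracket\Pi,\cc{\Pi}\rrbracket=0$.

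Establishing $\llbracket\Pi,\cc{\Pi}\rrbracket=0$ is the crux, and it is here that holomorphicity (rather than the Poisson condition) enters. Working in local holomorphic coordinates, $\Pi$ has holomorphic coefficients and all its legs are of the form $\partial_{z^{i}}$, whereas $\cc{\Pi}$ has antiholomorphic coefficients and legs $\partial_{\cc{z}^{i}}$. Every term of the Schouten bracket of two bivectors contracts one leg of a factor against a partial derivative of a coefficient of the other factor; hence each surviving term is either a holomorphic derivative $\partial_{z^{i}}$ applied to an antiholomorphic coefficient of $\cc{\Pi}$, or an antiholomorphic derivative $\partial_{\cc{z}^{i}}$ applied to a holomorphic coefficient of $\Pi$, and both vanish identically. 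Thus $\llbracket\Pi,\cc{\Pi}\rrbracket=0$, and substituting into the three displayed expressions gives $\llbracket\pi_{\lambda},\pi_{\lambda}\rrbracket=\llbracket\pi_{\mu},\pi_{\mu}\rrbracket=\llbracket\pi_{\lambda},\pi_{\mu}\rrbracket=0$, which completes the proof. The main obstacle is precisely this vanishing: one must translate the abstract holomorphicity of $\Pi$ into the coordinate statement that the mixed derivatives occurring in $\llbracket\Pi,\cc{\Pi}\rrbracket$ are all trivial. A coordinate-free alternative would be to invoke the identification $d_{L_{J}^{*}}=\bar{\partial}$ valid for $J=\big(\begin{smallmatrix} j & 0\\ 0 & -j^{*}\end{smallmatrix}\big)$ and to express $\llbracket\Pi,\cc{\Pi}\rrbracket$ directly through $\bar{\partial}\Pi=0$.
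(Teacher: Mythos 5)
Your argument is correct. Note, however, that the paper itself offers no proof of this lemma: it is imported wholesale from the cited reference on holomorphic Lie algebroids, so there is no in-paper argument to compare yours against. Judged on its own, your proof is complete: the type identity $j\pi_{\lambda}\diese=-\pi_{\mu}\diese$ follows correctly from $j\Pi\diese=\sqrt{-1}\,\Pi\diese$ by separating real and imaginary parts, the dual identity follows from $(\pi\diese)^{*}=-\pi\diese$, and the reduction of the three real Schouten brackets to $\llbracket\Pi,\Pi\rrbracket$, $\llbracket\cc{\Pi},\cc{\Pi}\rrbracket$, and $\llbracket\Pi,\cc{\Pi}\rrbracket$ is the right decomposition. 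The crux, $\llbracket\Pi,\cc{\Pi}\rrbracket=0$, is exactly where holomorphicity (as opposed to the Poisson condition) is used, and your coordinate argument --- each term of the bracket pairs a $\partial_{z^{i}}$ against an antiholomorphic coefficient or a $\partial_{\cc{z}^{i}}$ against a holomorphic one --- is sound. Your closing remark that one could instead phrase this via $d_{L_{J}^{*}}=\bar{\partial}$ is consistent with the machinery the paper develops in the $T\oplus T^{*}$ section, and would tie the lemma more tightly to Lemma~\ref{holomorphicPoissonGC} and Proposition~\ref{extendedsymplectic}, but the elementary coordinate computation suffices.
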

\begin{corollary}\label{corollary-hyperPoisson}
Given a hyper-Poisson structure $(\pi_{1},\pi_{2},\pi_{3})$ on a manifold $M$ with respect to a hypercomplex triple $(i,j,k)$,
we have
\begin{enumerate}
\item $\llbracket \pi_{\alpha},\pi_{\beta}\rrbracket=0$, for any $\alpha,\beta\in\{1,2,3\}$;
\item $\pi_{3}\diese=i\pi_{2}\diese=\pi_{2}\diese i^{*}$,
$\pi_{1}\diese=j\pi_{3}\diese=\pi_{3}\diese j^{*}$,
$\pi_{2}\diese=k\pi_{1}\diese=\pi_{1}\diese  k^{*}$;
\item $i\pi_{1}\diese=-\pi_{1}\diese i^{*}=j\pi_{2}\diese
=-\pi_{2}\diese j^{*}=k\pi_{3}\diese=-\pi_{3}\diese k^{*}$.
\end{enumerate}
Therefore, if one among $\pi_{1}$, $\pi_{2}$, and $\pi_{3}$ is invertible, then so are the other two.
In this case, $(\pi_{1}^{-1},\pi_{2}^{-1},\pi_{3}^{-1})$ is a hyper-symplectic structure on $M$
and $(i\pi_{1}\diese)^{-1}$ defines a pseudo-metric $g$ on $M$.
In particular, if $g$ is positive definite, we obtain a hyper-K\"{a}hler structure on $M$.
\end{corollary}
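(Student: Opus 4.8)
The plan is to derive everything from three applications of Lemma~\ref{lemma-holomorphicPoisson}, one for each of the three holomorphic Poisson tensors making up the hyper-Poisson structure, and then to repackage the resulting identities using only the quaternionic relations and the skew-symmetry of the $\pi_{\alpha}\diese$.

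First I would apply Lemma~\ref{lemma-holomorphicPoisson} to $\pi_{2}-\sqrt{-1}\pi_{3}$ (holomorphic Poisson with respect to $i$), to $\pi_{3}-\sqrt{-1}\pi_{1}$ (with respect to $j$), and to $\pi_{1}-\sqrt{-1}\pi_{2}$ (with respect to $k$). Reading off the bracket part of the lemma in each case yields $\llbracket\pi_{2},\pi_{2}\rrbracket=\llbracket\pi_{3},\pi_{3}\rrbracket=\llbracket\pi_{2},\pi_{3}\rrbracket=0$ together with its two cyclic analogues; collecting these gives assertion~(1). Reading off the $\sharp$-relation $\pi_{\mu}\diese=-\alpha\,\pi_{\lambda}\diese=-\pi_{\lambda}\diese\,\alpha^{*}$ in each case (with $\pi_{\lambda},\pi_{\mu}$ the real and imaginary parts and $\alpha\in\{i,j,k\}$ the relevant complex structure) produces exactly the three lines $\pi_{3}\diese=i\pi_{2}\diese=\pi_{2}\diese i^{*}$, $\pi_{1}\diese=j\pi_{3}\diese=\pi_{3}\diese j^{*}$, and $\pi_{2}\diese=k\pi_{1}\diese=\pi_{1}\diese k^{*}$ of assertion~(2). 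Thus (1) and (2) are essentially immediate.

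For (3), the two ``chains'' $i\pi_{1}\diese=j\pi_{2}\diese=k\pi_{3}\diese$ and $\pi_{1}\diese i^{*}=\pi_{2}\diese j^{*}=\pi_{3}\diese k^{*}$ follow from (2) by substitution together with the quaternionic identities $ij=k$ (etc.) for the first chain and their transposes $k^{*}j^{*}=i^{*}$ (etc.) for the second. The one substantial point is to bridge the two chains, i.e.\ to prove $i\pi_{1}\diese=-\pi_{1}\diese i^{*}$. I would do this by the computation
\[ i\pi_{1}\diese=jk\,\pi_{1}\diese=j(\pi_{1}\diese k^{*})=(j\pi_{1}\diese)k^{*}=-\pi_{3}\diese k^{*}=-\pi_{2}\diese j^{*}=-\pi_{1}\diese i^{*}, \]
where $k\pi_{1}\diese=\pi_{1}\diese k^{*}$ is the third line of (2), $j\pi_{1}\diese=-\pi_{3}\diese$ comes from $\pi_{1}\diese=j\pi_{3}\diese$, and the last two equalities again use (2) with the transpose relations $i^{*}k^{*}=j^{*}$ and $k^{*}j^{*}=i^{*}$. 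Equivalently one evaluates the single quantity $\pi_{3}\diese k^{*}$ in two ways and equates the outcomes. This interlocking step is the one I expect to be the main obstacle: it is exactly here that the six scattered relations of (2) must be made to mesh, and a mistake in the transpose relations (e.g.\ $j^{*}k^{*}=-i^{*}$ versus $k^{*}j^{*}=i^{*}$) would break the identity; note that this is also where the skew-symmetry $(\pi_{\alpha}\diese)^{*}=-\pi_{\alpha}\diese$ is implicitly doing the work, since $-\pi_{1}\diese i^{*}=(i\pi_{1}\diese)^{*}$.

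Finally, invertibility of one $\pi_{\alpha}\diese$ forces invertibility of the other two, since by (2) they differ only by composition with the invertible operators $i,j,k$. In that case I would set $\omega_{\alpha}=\pi_{\alpha}^{-1}$ and check that $(\omega_{1},\omega_{2},\omega_{3})$ is hyper-symplectic: each $\pi_{\beta}-\sqrt{-1}\pi_{\gamma}$ is a \emph{nondegenerate} holomorphic Poisson bivector, whose inverse is a holomorphic symplectic $2$-form, and relations (2)--(3) identify the real and imaginary parts of that inverse with $\pi_{\beta}^{-1}$ and $\pi_{\gamma}^{-1}$. For the metric, observe that (3) gives $i\pi_{1}\diese=-\pi_{1}\diese i^{*}=(i\pi_{1}\diese)^{*}$, so $g\diese:=(i\pi_{1}\diese)^{-1}$ is symmetric, i.e.\ a pseudo-metric; the same relations yield $\omega_{1}\diese=g\diese i$, $\omega_{2}\diese=g\diese j$, $\omega_{3}\diese=g\diese k$, while closedness of the $\omega_{\alpha}$ follows from the holomorphic symplectic condition on each $\omega_{\beta}+\sqrt{-1}\omega_{\gamma}$. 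When $g$ is positive definite, the Hitchin lemma quoted above then produces a hyper-K\"ahler structure on $M$.
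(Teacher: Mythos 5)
Your proposal is correct and follows exactly the route the paper intends: the paper states this as an unproved corollary of Lemma~\ref{lemma-holomorphicPoisson}, and your argument — three applications of that lemma to $\pi_{2}-\sqrt{-1}\pi_{3}$, $\pi_{3}-\sqrt{-1}\pi_{1}$, $\pi_{1}-\sqrt{-1}\pi_{2}$, followed by the quaternionic identities and skew-symmetry of the $\pi_{\alpha}\diese$ to splice the relations of (2) into (3) — is precisely the omitted derivation. All the individual computations you give (including the bridging identity $i\pi_{1}\diese=-\pi_{1}\diese i^{*}$ and the symmetry of $i\pi_{1}\diese$ yielding the pseudo-metric) check out.
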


\begin{proof}[Proof of Theorem~\ref{prophyperpoisson}]
\quad $(1)\Rightarrow(2)$\quad
Assume that $(\pi_1,\pi_2,\pi_3)$ is a hyper-Poisson  structure on $M$ with respect to a hypercomplex triple $(i,j,k)$.
As $\pi_{3}+\sqrt{-1}\pi_{2}=\sqrt{-1}(\pi_{2}-\sqrt{-1}\pi_{3})$ is a holomorphic
Poisson structure with respect to the complex structure $i$, by Lemma \ref{holomorphicPoissonGC},
$I=\big(\begin{smallmatrix} i & \pi_{3} \\ 0 & -i^{*} \end{smallmatrix}\big)$ is a complex structure
on the standard Courant algebroid $TM\oplus T^{*}M$
and $\pi_{2}\diese=-i\pi_{3}\diese=-\pi_{3}\diese i^{*}$.
Similarly, $K=\big(\begin{smallmatrix} k & -\pi_{1} \\ 0 & -k^{*} \end{smallmatrix}\big)$
is a complex structure on $TM\oplus T^{*}M$.
It follows from Corollary~\ref{corollary-hyperPoisson}
that $(I,J,K)$ is an almost hypercomplex structure on $TM\oplus T^{*}M$, and from
Theorem~\ref{eqv} that $(I,J,K)$ is a hypercomplex structure on $TM\oplus T^{*}M$.

$(2)\Leftrightarrow(3)$\quad
The equivalence of (2) and (3) is a direct consequence of Theorem~\ref{hyper-holosym} and Proposition~\ref{extendedsymplectic}.

$(2)\Rightarrow(1)$\quad
Given a hypercomplex structure $I=\big(\begin{smallmatrix} i & \pi_{3} \\ 0 & -i^{*} \end{smallmatrix}\big),
J=\big(\begin{smallmatrix} j & 0 \\ 0 & -j^{*} \end{smallmatrix}\big),
K=\big(\begin{smallmatrix} k & -\pi_{1} \\ 0 & -k^{*} \end{smallmatrix}\big)$
on the standard Courant algebroid $E=TM\oplus T^{*}M$,
let $\theta\in\Gamma(T^{1,0}\wedge (T^{0,1})^{*})$ and $\pi\in\Gamma(\wedge^{2}T^{1,0})$ be
defined by $\theta\diese=\frac{1}{2}(i+\sqrt{-1}k)$ and $\pi=\frac{1}{2}(\pi_{3}-\sqrt{-1}\pi_{1})$.
Then $\Omega=\theta+\pi$ is a holomorphic symplectic form on $E$
with respect to $J=\big(\begin{smallmatrix} j & 0 \\ 0 & -j^{*} \end{smallmatrix}\big)$, and, by Proposition~\ref{extendedsymplectic}, we have
\[\bar{\partial}\theta=\bar{\partial}\pi=0,
\quad [\pi,\pi]=0, \quad [\pi,\theta]=0, \quad [\theta,\theta]=0, \quad
\theta\diese\cc{\theta}\diese=-1, \quad
\pi\diese\cc{\theta}\diese+\theta\diese\cc{\pi}\diese=0.\]
Thus, $(i,j,k)$ is a hypercomplex structure on $M$ by our discussion in Example~\ref{exahypercomplexSymp}, and
$\pi_{3}-\sqrt{-1}\pi_{1}=2\pi$ is a holomorphic Poisson structure with respect to $j$.
As $I=\big(\begin{smallmatrix} i & \pi_{3} \\ 0 & -i^{*} \end{smallmatrix}\big)$ is a complex structure on $E=TM\oplus T^{*}M$, by Lemma~\ref{holomorphicPoissonGC},  $\pi_{2}-\sqrt{-1}\pi_{3}=-\sqrt{-1}(\pi_{3}+\sqrt{-1}\pi_{2})$
is a holomorphic Poisson structure with respect to the complex structure $i$,
where $\pi_{2}$ is defined by $\pi_{2}\diese=-i\pi_{3}\diese=-\pi_{3}i^{*}$.
And, $\pi_{1}-\sqrt{-1}\pi_{2}$ is a holomorphic Poisson structure with respect to $k$, since
$K=\big(\begin{smallmatrix} k & -\pi_{1} \\ 0 &k^{*}\end{smallmatrix}\big)$
is a complex structure on $E=TM\oplus T^{*}M$ and
$\pi_{2}\diese=-i\pi_{3}\diese=ij\pi_{1}\diese=k\pi_{1}\diese=\pi_{1}\diese k^{*}$.
\end{proof}

\begin{corollary}
If $(\pi_{1},\pi_{2},\pi_{3})$ is a hyper-Poisson structure on a smooth manifold $M$
with respect to a hypercomplex structure $(i,j,k)$,
then $\pi_{1}$, $\pi_{2}$, and $\pi_{3}$ have the same symplectic leaves and each leaf is a hyper-symplectic manifold.
\end{corollary}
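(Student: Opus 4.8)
The plan is to prove the two assertions separately: first that $\pi_1,\pi_2,\pi_3$ possess a common characteristic distribution (whence the same symplectic leaves), and then that the structure inherited by each leaf is hyper-symplectic.

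First I would recall that the symplectic foliation of a Poisson manifold $(M,\pi_\alpha)$ integrates the characteristic distribution $\operatorname{Im}\pi_\alpha\diese\subset TM$. By Corollary~\ref{corollary-hyperPoisson}(2) we have
\[
\pi_3\diese=\pi_2\diese i^{*},\qquad \pi_1\diese=\pi_3\diese j^{*},\qquad \pi_2\diese=\pi_1\diese k^{*}.
\]
Since $i,j,k$ square to $-\id$, their transposes $i^{*},j^{*},k^{*}$ are bundle isomorphisms of $T^{*}M$, so post-composition of $\pi_\alpha\diese$ with them does not change the image. Hence $\operatorname{Im}\pi_1\diese=\operatorname{Im}\pi_2\diese=\operatorname{Im}\pi_3\diese$. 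Thus the three Poisson structures share a single characteristic distribution $D$, and therefore have identical symplectic leaves (an immersed symplectic leaf is determined as a set by the characteristic distribution alone).

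Next I would show that $D$ is stable under $i,j,k$. From $\pi_3\diese=i\pi_2\diese$ in Corollary~\ref{corollary-hyperPoisson}(2) one gets $i(D)=i(\operatorname{Im}\pi_2\diese)=\operatorname{Im}(i\pi_2\diese)=\operatorname{Im}\pi_3\diese=D$, and likewise for $j$ and $k$. Fix a leaf $S$, so that $TS=D|_{S}$. Because $D$ is involutive and invariant under $i,j,k$, and because the Nijenhuis tensors of $i,j,k$ vanish on $M$, the Nijenhuis tensors of the restrictions $i|_{S},j|_{S},k|_{S}$ also vanish: the Nijenhuis formula involves only brackets of vector fields tangent to $S$, which remain tangent to $S$ by involutivity. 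Hence $(i|_{S},j|_{S},k|_{S})$ is a hypercomplex structure on $S$. Each $\pi_\alpha$ restricts to the (nondegenerate) symplectic Poisson structure on its leaf $S$; the pointwise algebraic identities of Corollary~\ref{corollary-hyperPoisson} restrict verbatim, and the holomorphic-Poisson conditions descend because $S$ is simultaneously a symplectic leaf of all three $\pi_\alpha$ and tangent to the $(i,j,k)$-invariant distribution $D$. I would thereby conclude that $(\pi_1|_{S},\pi_2|_{S},\pi_3|_{S})$ is a hyper-Poisson structure on $S$ with respect to $(i|_{S},j|_{S},k|_{S})$, with all three bivectors invertible.

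Finally, applying the invertible case of Corollary~\ref{corollary-hyperPoisson} directly to the leaf $S$ shows that $(\pi_1^{-1}|_{S},\pi_2^{-1}|_{S},\pi_3^{-1}|_{S})$ is a hyper-symplectic structure on $S$, which is exactly the assertion. The main obstacle is the justification in the previous paragraph that the \emph{differential} part of the hyper-Poisson conditions restricts to each leaf: the tensorial pointwise relations are immediate, but the integrability conditions (holomorphicity and the vanishing of the Schouten brackets $\llbracket\pi_\alpha,\pi_\beta\rrbracket$) require verifying that the complex combinations $\pi_\beta|_{S}-\sqrt{-1}\,\pi_\gamma|_{S}$ remain holomorphic Poisson for the restricted complex structures, which is where the compatibility of $S$ as a common symplectic leaf with the invariant distribution $D$ must be used carefully.
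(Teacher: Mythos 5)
Your proposal is correct and follows essentially the same route as the paper's own proof: both deduce the common characteristic distribution from the pointwise relations $\pi_3\diese=\pi_2\diese i^{*}$ (etc.), observe that this distribution is stable under $i,j,k$ so each leaf is a complex submanifold for all three structures, and then conclude by the symmetry of the roles of $\pi_1,\pi_2,\pi_3$ together with the invertible case of Corollary~\ref{corollary-hyperPoisson}. You are in fact more explicit than the paper about the one delicate step --- that the holomorphic-Poisson/integrability conditions descend to each leaf --- which the paper passes over with ``the conclusion follows.''
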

\begin{proof}
As $\pi_{2}\diese=-\pi_{3}\diese i^{*}$ according to Theorem~\ref{prophyperpoisson},  it follows that
$\pi_{2}\diese(T^{*}M)=\pi_{3}\diese(T^{*}M)$. Therefore, $\pi_{2}$ and $\pi_{3}$ have the same symplectic foliations.
Moreover, since $\pi_{2}\diese=-i\pi_{3}\diese$, such symplectic foliations are stable under $i$. Hence, each symplectic leaf
of $\pi_{2}$ (and $\pi_{3}$) is a complex submanifold with respect to $i$.
As the rule of $\pi_{1}, \pi_{2}, \pi_{3}$ is symmetric, the conclusion follows.
\end{proof}

\section{Hypercomplex connection and Lagrangian Lie subalgebroid}

\subsection{A basic study of hypercomplex connection}
\begin{lemma}\label{nabla-part}
Let $(I,J,K)$ be a hypercomplex structure on a Courant algebroid $(E,\rho,\langle,\rangle,\circ)$
and let $\Omega\in\wedge^{2}L_{J}$ denote the corresponding holomorphic symplectic form relative to $J$
defined by $\Omega\diese=\frac{I+iK}{2}$.
For any $X,Y,Z\in\Gamma(L_{J})$ and $\xi,\eta,\zeta\in\Gamma(L_{J}^{*})$, we have
\begin{enumerate}
\item $\nabla_{X}\xi=\imath_{X}d_{L_{J}}(\xi)\in\Gamma(L_{J}^{*})$ and
$\nabla_{\xi}X=\imath_{\xi}d_{L_{J}^*}(X)\in\Gamma(L_{J})$,
\item $\nabla_{X}Y=-\Omega\diese(\imath_{X}\mathcal{L}_{Y}\cc{\Omega})\in\Gamma(L_{J})$ and
$\nabla_{\xi}\eta=-\cc{\Omega}\diese(\imath_{\xi}\mathcal{L}_{\eta}\Omega)\in\Gamma(L_{J}^{*})$,
\item $R^{\nabla}(X,Y)Z=0$ and $R^{\nabla}(\xi,\eta)\zeta=0$,
\end{enumerate}
where $\nabla$ stands for the hypercomplex connection defined in Equation~\eqref{connection}
and $R^{\nabla}$ for its curvature.
\end{lemma}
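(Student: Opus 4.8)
The plan is to prove the three statements in order, deducing (2) from (1) and (3) from (1)--(2), so that the only genuine computation is the one establishing (1) directly from the closed formula~\eqref{connection}.

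\textbf{Part (1).} I would substitute $V$ into~\eqref{connection}, first taking $V=X\in\Gamma(L_{J})$ (so that $JX=iX$) and then $V=\xi\in\Gamma(L_{J}^{*})$ (so that $J\xi=-i\xi$), with the direction $U$ chosen in the opposite eigenbundle. Since $I$ and $K$ interchange $L_{J}$ and $L_{J}^{*}$ while $\tfrac{1}{2}(1\mp iJ)$ are the projections onto $L_{J}$ and $L_{J}^{*}$, the four terms of~\eqref{connection} collapse: for example the pair $JX\circ I\xi-J(X\circ I\xi)$ cancels, because $X,I\xi\in\Gamma(L_{J})$ and $L_{J}$ is involutive, leaving
\[ \nabla_{\xi}X=-\tfrac{1}{2}(1-iJ)(X\circ\xi)=-(X\circ\xi)_{L_{J}}, \]
and symmetrically $\nabla_{X}\xi=-(\xi\circ X)_{L_{J}^{*}}$. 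It then remains to identify these projections with the Lie algebroid differentials. Pairing $\nabla_{\xi}X$ with an arbitrary $\eta\in\Gamma(L_{J}^{*})$ and using the Courant axioms $\rho(\xi)\langle X,\eta\rangle=\langle\xi\circ X,\eta\rangle+\langle X,\xi\circ\eta\rangle$ together with $\langle\mathcal{D}f,\cdot\rangle=\tfrac{1}{2}\rho(\cdot)f$ reduces $(d_{L_{J}^{*}}X)(\xi,\eta)$ to $-\langle X\circ\xi,\eta\rangle=\langle\nabla_{\xi}X,\eta\rangle$; nondegeneracy of the pairing between $L_{J}$ and $L_{J}^{*}$ then yields $\nabla_{\xi}X=\imath_{\xi}d_{L_{J}^{*}}X$, and dually $\nabla_{X}\xi=\imath_{X}d_{L_{J}}\xi$.

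\textbf{Part (2).} The key observation is that $\nabla I=\nabla K=0$ forces the endomorphisms $\Omega\diese=\tfrac{1}{2}(I+iK)$ and $\cc{\Omega}\diese=\tfrac{1}{2}(I-iK)$ to be $\nabla$-parallel, that is $\nabla_{U}(\Omega\diese W)=\Omega\diese\nabla_{U}W$ and likewise for $\cc{\Omega}\diese$. For $X,Y\in\Gamma(L_{J})$ I would write $\cc{\Omega}\diese Y=\imath_{Y}\cc{\Omega}\in\Gamma(L_{J}^{*})$ and apply Part (1):
\[ \cc{\Omega}\diese(\nabla_{X}Y)=\nabla_{X}(\cc{\Omega}\diese Y)=\imath_{X}d_{L_{J}}(\imath_{Y}\cc{\Omega})=\imath_{X}\mathcal{L}_{Y}\cc{\Omega}, \]
where the last equality is Cartan's formula together with $d_{L_{J}}\cc{\Omega}=\cc{d_{L_{J}^{*}}\Omega}=0$. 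Inverting $\cc{\Omega}\diese$ by means of the nondegeneracy relation~\eqref{Omega1} (Remark~\ref{Oginvert}) gives $\nabla_{X}Y=-\Omega\diese(\imath_{X}\mathcal{L}_{Y}\cc{\Omega})$; the formula for $\nabla_{\xi}\eta$ follows from the conjugate argument, replacing $(L_{J},\cc{\Omega},d_{L_{J}})$ by $(L_{J}^{*},\Omega,d_{L_{J}^{*}})$ and invoking $d_{L_{J}^{*}}\Omega=0$.

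\textbf{Part (3).} Since $\Omega\diese$ is $\nabla$-parallel it commutes with the curvature operator, so $\Omega\diese R^{\nabla}(\xi,\eta)\zeta=R^{\nabla}(\xi,\eta)(\Omega\diese\zeta)$, and it suffices to show that the curvature of $\nabla$ acting on sections of $L_{J}$ vanishes when both directions lie in $L_{J}^{*}$; by Part (1) this is the curvature of the operator $W\mapsto\imath_{\bullet}d_{L_{J}^{*}}W$. Writing $\gamma=d_{L_{J}^{*}}(\Omega\diese\zeta)$ and using $d_{L_{J}^{*}}^{2}=0$, repeated application of the identities $\mathcal{L}=\imath\,d_{L_{J}^{*}}+d_{L_{J}^{*}}\imath$ and $[\mathcal{L}_{\xi},\imath_{\eta}]=\imath_{\llbracket\xi,\eta\rrbracket}$ telescopes the three terms $\nabla_{\xi}\nabla_{\eta}-\nabla_{\eta}\nabla_{\xi}-\nabla_{\llbracket\xi,\eta\rrbracket}$ into a single exact expression governed by $d_{L_{J}^{*}}\Omega$; the closedness $d_{L_{J}^{*}}\Omega=0$ is exactly what annihilates the remaining term, giving $R^{\nabla}(\xi,\eta)\zeta=0$. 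The statement $R^{\nabla}(X,Y)Z=0$ is the mirror computation with the roles of $\Omega,L_{J}$ and $\cc{\Omega},L_{J}^{*}$ exchanged. I expect the bookkeeping in this final telescoping—pinning down precisely how the closedness of $\Omega$ enters to cancel the residual curvature term—to be the main obstacle, the earlier parts being essentially forced once the parallelism of $\Omega\diese$ and $\cc{\Omega}\diese$ is in hand.
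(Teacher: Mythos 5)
Parts (1) and (2) of your plan are essentially correct. Part (1) is the same collapse of Equation~\eqref{connection} onto eigenbundle projections, followed by the same Courant-axiom manipulation, that the paper uses. Part (2) takes a genuinely different route: the paper recomputes $\nabla_{\xi}\eta$ directly from~\eqref{connection} and then pairs with $\Omega$, whereas you deduce it from Part (1) together with the $\nabla$-parallelism of $\Omega\diese=\tfrac{1}{2}(I+iK)$ and Cartan's formula. Your argument is valid and shorter, at the cost of invoking $d_{L_{J}^{*}}\Omega=0$ (via $d_{L_{J}}\cc{\Omega}=0$) at a step where the paper's direct computation does not need it; both routes land on the key identity $\Omega(\nabla_{\xi}\eta,\zeta)=(\mathcal{L}_{\eta}\Omega)(\xi,\zeta)$.

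Part (3) is where the genuine gap lies, and the ``bookkeeping'' you deferred is not bookkeeping but an obstruction: the telescoping does not close. With $\gamma=d_{L_{J}^{*}}(\zeta\lrcorner\Omega)=\mathcal{L}_{\zeta}\Omega$, one has $\imath_{\xi}\mathcal{L}_{\eta}\gamma=\mathcal{L}_{\xi}\imath_{\eta}\gamma+d_{L_{J}^{*}}\bigl(\gamma(\xi,\eta)\bigr)$ for closed $\gamma$, while $\imath_{\llbracket\xi,\eta\rrbracket}\gamma=\mathcal{L}_{\xi}\imath_{\eta}\gamma-\imath_{\eta}\mathcal{L}_{\xi}\gamma$, so
\begin{equation*}
\bigl(R^{\nabla}(\xi,\eta)\zeta\bigr)\lrcorner\Omega
=\bigl(\imath_{\xi}\mathcal{L}_{\eta}-\imath_{\eta}\mathcal{L}_{\xi}-\imath_{\llbracket\xi,\eta\rrbracket}\bigr)\gamma
=d_{L_{J}^{*}}\bigl(\gamma(\xi,\eta)\bigr)
=d_{L_{J}^{*}}\bigl(\Omega(\nabla_{\xi}\zeta,\eta)\bigr).
\end{equation*}
This residual is not killed by $d_{L_{J}^{*}}\Omega=0$, and it is nonzero in general: on $M=\CC^{2}$ with holomorphic symplectic form $e^{z_{1}}dz_{1}\wedge dz_{2}$ (Example~\ref{holomorphicSymplectic}), writing $\bar{\partial}_{k}=\partial/\partial\bar{z}_{k}\in\Gamma(L_{J}^{*})$, a direct evaluation of~\eqref{connection} gives $\nabla_{\bar{\partial}_{1}}\bar{\partial}_{1}=\bar{\partial}_{1}$, $\nabla_{\bar{\partial}_{2}}\bar{\partial}_{1}=\bar{\partial}_{2}$, $\nabla_{\bar{\partial}_{1}}\bar{\partial}_{2}=0$, hence $R^{\nabla}(\bar{\partial}_{2},\bar{\partial}_{1})\bar{\partial}_{1}=\bar{\partial}_{2}\neq0$. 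So item (3) cannot be established by this route for arbitrary $\xi,\eta,\zeta\in\Gamma(L_{J}^{*})$; note that the paper's own proof carries the same defect, since it asserts that $\imath_{\xi}\mathcal{L}_{\eta}-\imath_{\eta}\mathcal{L}_{\xi}-\imath_{\llbracket\xi,\eta\rrbracket}$ annihilates closed $2$-forms. What does survive is precisely what Theorem~\ref{connecontr} uses: for sections of a Lie subalgebroid of $L_{J}^{*}$ isotropic with respect to $\Omega$ one has $\Omega(\nabla_{\xi}\zeta,\eta)=(\mathcal{L}_{\zeta}\Omega)(\xi,\eta)=0$, the residual vanishes, and flatness holds. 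You should either restrict the curvature statement to that setting or supply an independent argument; as written, your Part (3) (and the mirror statement for $X,Y,Z\in\Gamma(L_{J})$) is not proved.
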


\begin{proof}
\quad\textbf{(1)}\quad For all $X, Y\in\Gamma(L_{J}), \xi\in\Gamma(L_{J}^*)$, by Equation \eqref{connection}, we have
\begin{equation*}
  \langle\nabla_{X}\xi, Y\rangle
=\langle-\frac{1+iJ}{2}(\xi\circ X)+\frac{K}{2}(i+J)(\xi\circ IX), Y\rangle
=-\langle\xi\circ X, Y\rangle,
\end{equation*}
as $I$ swaps $L_{J}$ and $L_{J}^*$,  and as $L_{J}^*$ is involutive. Thus,
\begin{align*}
 &\langle\nabla_{X}\xi, Y\rangle=\langle-\xi\circ X, Y\rangle
=\langle-2\mathcal{D}\langle\xi, X\rangle+X\circ\xi, Y\rangle\\
=& -\rho(Y)\langle\xi, X\rangle+\rho(X)\langle\xi, Y\rangle-\langle\xi, X\circ Y\rangle
=d_{L_{J}}(\xi)(X, Y),
\end{align*}
Now $\nabla_{X}\xi=\imath_{X}d_{L_{J}}(\xi)$, as $\nabla J=0$ implies that $\nabla_{X}\xi\in L_{J}^{*}$.

Similarly, we have $\nabla_{\xi}X=\imath_{\xi}d_{L_{J}^*}(X)$.

\textbf{(2)}\quad First, $\nabla J=0$ implies that $\nabla_{\xi}\eta\in\Gamma(L_{J}^*)$, $\forall\xi,\eta\in\Gamma(L_{J}^*)$. By Equation \eqref{connection}, we have
\begin{equation*}
\nabla_{\xi}\eta
=\frac{-I+iK}{2}(\eta\circ I\xi)-\eta\circ\xi=-\cc{\Omega}\diese(\eta\circ\Omega\diese\xi)-\eta\circ\xi.
\end{equation*}
As $\Omega^{\sharp}:L^{*}_{J}\rightarrow L_{J}$ is the inverse of
$-\cc{\Omega}^{\sharp}:L_{J}\rightarrow L_{J}^{*}$, for all $\xi,\eta,\zeta\in\Gamma(L_{J}^*)$, we have
\begin{equation*}
\Omega(\nabla_{\xi}\eta,\zeta)
=\langle\eta\circ\Omega\diese\xi,\zeta\rangle-\Omega(\eta\circ \xi,\zeta)
=(\mathcal{L}_{\eta}\Omega)(\xi,\zeta).
\end{equation*}
Therefore,
\begin{equation}\label{nabla-xieta}
(\nabla_{\xi}\eta)\lrcorner\Omega=\xi\lrcorner(\mathcal{L}_{\eta}\Omega),
\end{equation}
i.e.\ $\Omega\diese(\nabla_{\xi}\eta)=\imath_{\xi}(\mathcal{L}_{\eta}\Omega)$. From Remark \ref{Oginvert}, $\nabla_{\xi}\eta=-\cc{\Omega}\diese(\imath_{\xi}\mathcal{L}_{\eta}\Omega)$ follows.

Similarly, we have $\nabla_{X}Y=-\Omega\diese(\imath_{X}\mathcal{L}_{Y}\cc\Omega)\in\Gamma(L_{J})$
for all $X, Y\in\Gamma(L_{J})$.


\textbf{(3)}\quad For all $\xi,\eta,\zeta\in\Gamma(L_{J}^*)$, by Equation~\eqref{nabla-xieta}, we have
\begin{align*}
R^{\nabla}(\xi,\eta)\zeta\lrcorner\Omega
=&(\nabla_{\xi}\nabla_{\eta}\zeta-\nabla_{\eta}\nabla_{\xi}\zeta-\nabla_{\llbracket\xi,\eta\rrbracket}\zeta)\lrcorner\Omega\\
=&\xi\lrcorner d_{L_{J}^*}(\nabla_{\eta}\zeta\lrcorner\Omega)-\eta\lrcorner d_{L_{J}^*}(\nabla_{\xi}\zeta\lrcorner\Omega)
-\llbracket\xi,\eta\rrbracket\lrcorner d_{L_{J}^*}(\zeta\lrcorner\Omega)\\
=&\xi\lrcorner d_{L_{J}^*}(\eta\lrcorner d_{L_{J}^*}(\zeta\lrcorner\Omega))
-\eta\lrcorner d_{L_{J}^*}(\xi\lrcorner d_{L_{J}^*}(\zeta\lrcorner\Omega))
-\llbracket\xi,\eta\rrbracket\lrcorner d_{L_{J}^*}(\zeta\lrcorner\Omega)\\
=&\xi\lrcorner\mathcal{L}_{\eta} d_{L_{J}^*}(\zeta\lrcorner\Omega)
-\eta\lrcorner\mathcal{L}_{\xi} d_{L_{J}^*}(\zeta\lrcorner\Omega)
-\llbracket\xi,\eta\rrbracket\lrcorner d_{L_{J}^*}(\zeta\lrcorner\Omega)\\
=&(\xi\lrcorner\mathcal{L}_{\eta}-\eta\lrcorner\mathcal{L}_{\xi}
-\llbracket\xi,\eta\rrbracket\lrcorner)d_{L_{J}^*}(\zeta\lrcorner\Omega)\\
=&0 .
\end{align*}
Thus,  $R^{\nabla}(\xi,\eta)\zeta=0$, $\forall \xi,\eta,\zeta\in\Gamma(L_{J}^*)$.

Similarly, $R^{\nabla}(X,Y)Z=0$, $\forall X,Y,Z\in\Gamma(L_{J})$.
\end{proof}

\begin{proposition}\label{connection-eqv}
Let $(I,J,K)$ be a hypercomplex structure on a Courant algebroid $(E,\rho,\langle,\rangle,\circ)$,
let $\nabla$ be the associated hypercomplex connection given in Equation~\eqref{connection},
and let $\Omega\in\wedge^{2}L_{J}$ be the symplectic form, holomorphic with respect to $J$,
defined by $\Omega\diese=\frac{I+iK}{2}$.
The following assertions are equivalent to each other for any $V\in\Gamma(E)$:
\begin{enumerate}
\item $\nabla V=0$;
\item $d_{L_{I}}(V+iIV)=0$, $d_{L_{J}}(V+iJV)=0$, and $d_{L_{K}}(V+iKV)=0$;
\item $d_{L_{J}}(V+iJV)=0$ and $\mathcal{L}_{V+iJV}\Omega=0$.
\end{enumerate}
(Here we consider $V+iIV$ as a section of $\Gamma(\cc{L_{I}})$,
$V+iJV$ as a section of $\Gamma(\cc{L_{J}})$,
and $V+iKV$ as a section of $\Gamma(\cc{L_{K}})$.)
\end{proposition}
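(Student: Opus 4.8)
The plan is to establish the cycle (1) $\Rightarrow$ (2) $\Rightarrow$ (3) $\Rightarrow$ (1), after first reducing condition (1) to a statement about the single $\overline{L_J}$-valued section $V+iJV$. Since $V$ is real and $\nabla J=0$ (Theorem~\ref{hypercomplex-connection}), the operator $\nabla_U$ commutes with the projections $\tfrac12(1\pm iJ)$ onto $L_J^*$ and $L_J$. Writing $V=\tfrac12(V-iJV)+\tfrac12(V+iJV)$, whose two summands are complex conjugates of one another, I would note that conjugating the identity $\nabla_U(V+iJV)=0$ and replacing the direction $U$ by $\overline U$ gives $\nabla_{\overline U}(V-iJV)=0$; hence $\nabla V=0$ if and only if $\nabla(V+iJV)=0$. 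The same remark applies with $J$ replaced by $I$ or $K$, using $\nabla I=\nabla K=0$.

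For (3) $\iff$ (1), set $\xi=V+iJV\in\Gamma(L_J^*)$ and decompose each direction along $E_{\CC}=L_J\oplus L_J^*$. For $X\in\Gamma(L_J)$, Lemma~\ref{nabla-part}(1) gives $\nabla_X\xi=\imath_X d_{L_J}\xi$, so the vanishing of $\nabla_X\xi$ for all such $X$ is exactly $d_{L_J}(V+iJV)=0$. For $\eta\in\Gamma(L_J^*)$, Lemma~\ref{nabla-part}(2) together with Remark~\ref{Oginvert} gives
\[ \Omega^{\sharp}(\nabla_{\eta}\xi)=\imath_{\eta}\mathcal{L}_{\xi}\Omega , \]
and since $\Omega^{\sharp}\colon L_J^*\to L_J$ is invertible, the vanishing of $\nabla_\eta\xi$ for all such $\eta$ is exactly $\mathcal{L}_{V+iJV}\Omega=0$. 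Thus $\nabla\xi=0$ is equivalent to (3), and by the reduction above this is equivalent to (1).

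The key structural input for the direction involving (2) is that the hypercomplex connection is invariant under cyclic permutation of $(I,J,K)$: the properties $\nabla I=\nabla J=\nabla K=0$ and the torsion identity~\eqref{torsion} that characterize $\nabla$ in Theorem~\ref{eqv} are symmetric in $I,J,K$, so by uniqueness $\nabla$ is the same connection for the triples $(J,K,I)$ and $(K,I,J)$. Consequently Lemma~\ref{nabla-part} holds verbatim with $J$ replaced by $I$ or $K$ and $\Omega$ replaced by the corresponding holomorphic symplectic form. Granting (1), each of $\nabla(V+iIV)$, $\nabla(V+iJV)$, $\nabla(V+iKV)$ vanishes; restricting these to $L_I$-, $L_J$-, and $L_K$-directions and invoking the relevant instance of Lemma~\ref{nabla-part}(1) yields the three identities of (2), giving (1) $\Rightarrow$ (2) at once.

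The substantive step is the converse (2) $\Rightarrow$ (3). Here I would exploit the isomorphisms $\tfrac{1\pm I}{\sqrt2}\colon L_J^*\leftrightarrow L_K^*$ and the fact that $I\xi=\Omega^{\sharp}\xi$ for $\xi\in L_J^*$ (Lemma~\ref{screwcap}) to transport the hypothesis $d_{L_K}(V+iKV)=0$ over to $L_J^*$ and, using the flatness recorded in Lemma~\ref{nabla-part}(3), rewrite it as the missing Lie-derivative condition $\mathcal{L}_{V+iJV}\Omega=0$ required by (3). I expect this translation to be the main obstacle: each $d_{L_\bullet}$-condition a priori controls only the $\bullet$-holomorphic component of $\nabla(V+i\bullet V)$, whereas $\mathcal{L}_{V+iJV}\Omega=0$ is a statement about $L_J^*$-directional derivatives, so the argument hinges on a careful accounting of how the three eigenbundle splittings of $E_{\CC}$ interact through $\Omega^{\sharp}$ and the maps $\tfrac{1\pm I}{\sqrt2}$, and this bookkeeping is where the real work lies.
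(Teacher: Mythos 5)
Your reduction of (1) to $\nabla(V+iJV)=0$, your proof that this is equivalent to (3) via Lemma~\ref{nabla-part}, and your derivation of (1)$\Rightarrow$(2) all match the paper's argument and are sound. The genuine gap is the implication (2)$\Rightarrow$(3) (equivalently (2)$\Rightarrow$(1)), which you do not prove: you describe a plan --- transporting $d_{L_K}(V+iKV)=0$ over to $L_J^*$ through $\tfrac{1\pm I}{\sqrt{2}}$ and ``using the flatness recorded in Lemma~\ref{nabla-part}(3)'' --- and then concede that the bookkeeping ``is where the real work lies.'' That is precisely the step that needs a proof, and the tools you name do not supply one: flatness of $R^{\nabla}$ on $L_J$ and $L_J^*$ is a statement about second derivatives and has no bearing on converting the three first-order conditions of (2) into the $L_J^*$-directional condition $\mathcal{L}_{V+iJV}\Omega=0$.

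To see why this is not mere bookkeeping, note what (2) actually encodes. Since $\nabla\alpha=0$ and $\nabla$ is real, the condition $d_{L_{\alpha}}(V+i\alpha V)=0$ is equivalent (via Lemma~\ref{nabla-part}(1) together with complex conjugation) to the statement that the bundle map $A\colon U\mapsto\nabla_{U}V$ preserves both eigenbundles of $\alpha$, i.e.\ commutes with $\alpha$. So (2) says exactly $[A,I]=[A,J]=[A,K]=0$, whereas (1) says $A=0$; the commutant of the quaternion algebra is large, so no amount of shuffling these commutation relations through $\Omega^{\sharp}$ and $\tfrac{1\pm I}{\sqrt{2}}$ can close the argument by itself --- some further identity tying $\nabla_{\alpha U}V$, $\nabla_{U}(\alpha V)$, and $\alpha\nabla_{U}V$ together must be injected. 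This is what the paper's proof of (2)$\Rightarrow$(1) does: from each hypothesis it extracts the pair of identities $\nabla_{U}(\alpha V)=\nabla_{\alpha U}V$ and $\nabla_{\alpha U}V=\alpha\nabla_{U}V$, and then chains them with the quaternionic relations to force $K\nabla_{U}V=-K\nabla_{U}V$. Your write-up contains none of this, so as it stands the equivalence with (2) is unproved. You should also scrutinize that chain carefully at the step $I\nabla_{U}(JV)=\nabla_{IU}(JV)$, which requires applying the $I$-hypothesis to the section $JV$ rather than to $V$: the delicacy you sensed in this direction is real, and it is exactly where any correct argument must do its work.
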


\begin{proof} Set $\xi=V+iJV$, then $\xi\in\Gamma(L_J^*)$.

\quad (1)$\Rightarrow$(2) \quad
Given $\nabla V=0$, we have $\nabla\xi=0$, as $\nabla J=0$. By Lemma~\ref{nabla-part}, $\imath_{X}d_{L_{J}}(\xi)=\nabla_{X}\xi=0$, for all $X\in\Gamma(L_{J})$. As a consequence, $d_{L_{J}}(V+iJV)=d_{L_{J}}(\xi)=0.$
Similarly, we have $d_{L_{I}}(V+iIV)=0$ and $d_{L_{K}}(V+iKV)=0$.

\quad (2)$\Rightarrow$(1) \quad
For any $ U\in\Gamma(E)$, as $U-iJU$ is a section of $L_{J}$, by Lemma \ref{nabla-part}, we have 
\[\nabla_{U-iJU}(V+iJV)=\imath_{(U-iJU)}d_{L_{J}}(\xi)=0.\]  Since
\[\nabla_{U-iJU}(V+iJV)=(\nabla_{U}V+\nabla_{JU}JV)+i(\nabla_{U}JV-\nabla_{JU}V),\]
and since $\nabla$ defined by Equation~\eqref{connection} is a real connection, we have $\nabla_{U}JV=\nabla_{JU}V$.
Hence, by $\nabla J=0$, we have
\begin{equation}\label{nabla_J}
\nabla_{JU}V=\nabla_{U}JV=J\nabla_{U}V.
\end{equation}
Similarly,
\begin{equation}\label{nabla_IK}
\nabla_{IU}V=I\nabla_{U}V,~~\nabla_{KU}V=K\nabla_{U}V.
\end{equation}
From Equations \eqref{nabla_J} and \eqref{nabla_IK} and identity $\nabla I=\nabla J=\nabla K=0$, we have
\begin{equation}
K\nabla_{U}V=\nabla_{U}KV=\nabla_{U}IJV=I\nabla_{U}(JV)=\nabla_{IU}JV=J\nabla_{IU}V=JI\nabla_{U}V=-K\nabla_{U}V.
\end{equation}
Therefore, $\nabla_{U}V=0$ for all $U\in\Gamma(E)$, i.e.\ $\nabla V=0$.

\quad (1)$\Rightarrow$(3) \quad
As $\nabla V=0$ and $\nabla J=0$, we have $\nabla\xi=0$.
It follows from Lemma~\ref{nabla-part} that $\nabla_{X}\xi=\imath_{X}d_{L_{J}}(\xi)$  and $\nabla_{\eta}\xi=-\cc{\Omega}\diese(\imath_{\eta}\mathcal{L}_{\xi}\Omega)$ for all $X\in\Gamma(L_{J})$ and $\eta\in\Gamma(L_{J}^{*})$.
Thus, $d_{L_{J}}(\xi)=0$ and $\mathcal{L}_{\xi}\Omega=0$.

\quad (3)$\Rightarrow$(1) \quad
Since $d_{L_{J}}(\xi)=0$ and $\mathcal{L}_{\xi}\Omega=0$, by Lemma~\ref{nabla-part}, we have $\nabla\xi=0$.
As $\nabla$ is a real connection, we have $\nabla V=0$ from $\nabla(V+iJV)=\nabla\xi=0$.
\end{proof}

\begin{remark}
In Proposition~\ref{connection-eqv}, (2) can be understood as the requirement that the vector field $V$ be the common real component of three complex vector fields, which are holomorphic
with respect to the complex structures $I$, $J$, and $K$ respectively, whereas (3) can be understood as the requirement that $V$ be the real component of a holomorphic symplectic vector field.
\end{remark}

\subsection{The induced connections on Lagrangian Lie subalgebroids}
\begin{definition}
Let $(E,\rho,\langle,\rangle,\circ)$ be a Courant algebroid endowed with a complex structure $J$.
Assume that $\Omega\in\Gamma(\wedge^{2}L_{J})$ is a holomorphic symplectic structure on $E$
relative to the complex structure $J$. A Lie subalgebroid of $L_J^*$ is said to be \emph{Lagrangian}
if it is maximal isotropic with respect to $\Omega$.
\end{definition}

\begin{theorem}\label{connecontr}
Let $(E,\rho,\langle,\rangle,\circ)$ be a Courant algebroid endowed with a complex structure $J$.
Assume that $\Omega\in\Gamma(\wedge^2 L_{J})$ is holomorphic symplectic with respect to $J$, and set
$I=\Omega\diese+\cc{\Omega}\diese$ and $K=-i(\Omega\diese-\cc{\Omega}\diese)$.
Given a Lie subalgebroid $L$ of $L_J^*$ Lagrangian with respect to $\Omega$,
the hypercomplex connection $\nabla$ defined by Equation~\eqref{connection}
induces a torsion-free flat $L$-connection $\nabla|_{L}$ on $L$.
\end{theorem}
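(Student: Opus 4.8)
The plan is to establish three things about the restriction $\nabla|_{L}$: that it is a \emph{well-defined} $L$-connection on $L$ (i.e.\ $\nabla_{\xi}\eta\in\Gamma(L)$ whenever $\xi,\eta\in\Gamma(L)$, together with the two connection axioms), that it is \emph{torsion-free}, and that it is \emph{flat}. By Theorem~\ref{hyper-holosym} the triple $(I,J,K)$ is a genuine hypercomplex structure, so $\nabla$ is the hypercomplex connection of Equation~\eqref{connection} and Lemma~\ref{nabla-part} applies verbatim. Since $L\subseteq L_{J}^{*}$, the bracket $\llbracket\cdot,\cdot\rrbracket$ restricts on $L$ to the Lie algebroid bracket (on an isotropic subbundle $\langle\xi,\eta\rangle=0$ forces $\xi\circ\eta=\llbracket\xi,\eta\rrbracket$), and part~(3) of Lemma~\ref{nabla-part} already gives $R^{\nabla}(\xi,\eta)\zeta=0$ for all $\xi,\eta,\zeta\in\Gamma(L_{J}^{*})\supseteq\Gamma(L)$; hence, once well-definedness is in hand, flatness of $\nabla|_{L}$ is automatic.

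The crux is well-definedness, and this is where the Lagrangian hypothesis does all the work. Because $\Omega$ restricts to a nondegenerate (symplectic) form on $L_{J}^{*}$ (Remark~\ref{Oginvert}), a maximal isotropic subbundle $L$ coincides with its $\Omega$-orthogonal $L^{\perp_{\Omega}}$ inside $L_{J}^{*}$. So to prove $\nabla_{\xi}\eta\in\Gamma(L)$---knowing already from Lemma~\ref{nabla-part} that $\nabla_{\xi}\eta\in\Gamma(L_{J}^{*})$---it suffices to check that $\Omega(\nabla_{\xi}\eta,\mu)=0$ for every $\mu\in\Gamma(L)$. Using the identity $\Omega\diese(\nabla_{\xi}\eta)=\imath_{\xi}(\mathcal{L}_{\eta}\Omega)$ of Lemma~\ref{nabla-part}, I would compute
\[ \Omega(\nabla_{\xi}\eta,\mu)=\langle\imath_{\xi}\mathcal{L}_{\eta}\Omega,\mu\rangle=(\mathcal{L}_{\eta}\Omega)(\xi,\mu)=\rho(\eta)\Omega(\xi,\mu)-\Omega(\llbracket\eta,\xi\rrbracket,\mu)-\Omega(\xi,\llbracket\eta,\mu\rrbracket). \]
Since $L$ is a Lie subalgebroid it is closed under the bracket, so $\llbracket\eta,\xi\rrbracket,\llbracket\eta,\mu\rrbracket\in\Gamma(L)$, and since $L$ is isotropic with respect to $\Omega$ each of the three terms on the right vanishes. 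This yields $\nabla_{\xi}\eta\in\Gamma(L)$. The connection axioms come along for free: \eqref{fnabla} gives $\nabla_{f\xi}\eta=f\nabla_{\xi}\eta$, while for the second axiom I would observe that on $L$ the correction term $\triangle_{f}$ of \eqref{deltaf} vanishes---indeed $\langle\xi,\eta\rangle=0$, $\langle J\xi,\eta\rangle=-i\langle\xi,\eta\rangle=0$, and $\langle I\xi,\eta\rangle=\langle K\xi,\eta\rangle=0$ because $I\xi=\Omega\diese\xi$, $K\xi=-i\Omega\diese\xi$ pair with $\eta$ to give multiples of $\Omega(\xi,\eta)=0$---so \eqref{nablaf} collapses to the ordinary Leibniz rule $\nabla_{\xi}(f\eta)=(\rho(\xi)f)\eta+f\nabla_{\xi}\eta$.

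For torsion-freeness I would use the explicit torsion of Theorem~\ref{hypercomplex-connection}, namely $T^{\nabla}(\xi,\eta)=I\mathcal{D}\langle\xi,I\eta\rangle+J\mathcal{D}\langle\xi,J\eta\rangle+K\mathcal{D}\langle\xi,K\eta\rangle$. For $\xi,\eta\in\Gamma(L)$ the same bookkeeping shows that the three functions $\langle\xi,I\eta\rangle$, $\langle\xi,J\eta\rangle$, $\langle\xi,K\eta\rangle$ vanish identically (using $J\eta=-i\eta$, $I\eta=\Omega\diese\eta$, $K\eta=-i\Omega\diese\eta$, and the isotropy of $L$ both for $\langle,\rangle$ and for $\Omega$), whence $T^{\nabla}(\xi,\eta)=0$ and therefore $\nabla_{\xi}\eta-\nabla_{\eta}\xi=\llbracket\xi,\eta\rrbracket$. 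Combined with flatness from Lemma~\ref{nabla-part}(3), this finishes the proof. The one genuinely delicate point is the well-definedness step: it is precisely the interplay between $L$ being closed under the bracket (Lie subalgebroid) and $L$ being $\Omega$-isotropic and maximal (Lagrangian) that forces $\nabla_{\xi}\eta$ back into $L$, and dropping either hypothesis would break the argument.
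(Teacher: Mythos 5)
Your proof is correct and follows essentially the same route as the paper's: both rest on the identity $\Omega\diese(\nabla_{\xi}\eta)=\imath_{\xi}(\mathcal{L}_{\eta}\Omega)$ from Lemma~\ref{nabla-part}, expand $(\mathcal{L}_{\eta}\Omega)(\xi,\zeta)$ so that involutivity and $\Omega$-isotropy of $L$ kill every term, invoke maximal isotropy to conclude $\nabla_{\xi}\eta\in\Gamma(L)$, and then observe that the pairings $\langle\xi,\eta\rangle$, $\langle I\xi,\eta\rangle$, $\langle J\xi,\eta\rangle$, $\langle K\xi,\eta\rangle$ all vanish on $L$ so that $\triangle_{f}$ and the torsion formula~\eqref{torsion} collapse, with flatness coming from Lemma~\ref{nabla-part}(3). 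You merely spell out a few steps the paper leaves implicit (the coincidence of $\circ$ and $\llbracket\cdot,\cdot\rrbracket$ on the isotropic subbundle, and the explicit vanishing of $\triangle_{f}$); no gaps.
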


\begin{proof}
For all $\xi,\eta,\zeta\in\Gamma(L)$, as $L$ is involutive and isotropic with respect to $\Omega$, we have $\Omega(\xi,\zeta)=\Omega(\xi,\eta\circ\zeta)=\Omega(\eta\circ\xi,\zeta)=0$.
By Equation~\eqref{nabla-xieta}, we have
\begin{equation*}
\Omega(\nabla_{\xi}\eta,\zeta)=(\mathcal{L}_{\eta}\Omega)(\xi,\zeta)
=\rho(\eta)\Omega(\xi,\zeta)-\Omega(\xi,\eta\circ\zeta)-\Omega(\eta\circ\xi,\zeta)=0.
\end{equation*}
As $L$ is a Lagrangian subalgebroid with respect to $\Omega$, we get $\nabla_{\xi}\eta\in\Gamma(L)$ for all $\xi,\eta\in\Gamma(L)$.

On the other hand, for all $\xi,\eta\in\Gamma(L)$, we have $\langle\xi,\eta\rangle=0$,
$\langle I\xi,\eta\rangle=\langle\Omega\diese\xi,\eta\rangle=\Omega(\xi,\eta)=0$,
$\langle J\xi,\eta\rangle=-i\langle\xi,\eta\rangle=0$,
and $\langle K\xi,\eta\rangle=-i\langle\Omega\diese\xi,\eta\rangle=-i\Omega(\xi,\eta)=0$.
It follows from Equations \eqref{deltaf}, \eqref{fnabla}, \eqref{nablaf}, and \eqref{torsion}
that $\nabla|_{L}$ is a torsion-free $L$-connection on $L$ and from Lemma~\ref{nabla-part}
that $\nabla|_{L}$ is flat.
\end{proof}

The following lemma, which demonstrates the relation between Dirac structures and Lagrangian Lie subalgebroids, is easy to verify.
\begin{lemma}\label{DiracLagragian}
Let $(I, J, K)$ be a hypercomplex structure on a Courant algebroid $(E,\rho,\langle,\rangle,\circ)$, and
let $\Omega\in\Gamma(\wedge^{2}L_{J})$ be the associated holomorphic symplectic form relative to $J$.
If $D$ is a Dirac subbundle of $E$ stable under $I$, $J$, and $K$, then $L=\frac{1+iJ}{2}D$
is a subalgebroid of $L_J^*$ Lagrangian with respect to $\Omega$.
\end{lemma}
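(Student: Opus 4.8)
The plan is to identify the projected bundle $L=\frac{1+iJ}{2}D$ with the intersection $D_{\CC}\cap L_J^*$ and then to read off each of the three required properties---that $L$ is a subbundle, that it is involutive, and that it is Lagrangian for $\Omega$---from the corresponding property of the Dirac structure $D$. First I would exploit the $J$-stability of $D$. Since $JD=D$, the complexification $D_{\CC}$ is preserved by $J$ and splits into its $\pm i$-eigenbundles, $D_{\CC}=(D_{\CC}\cap L_J)\oplus(D_{\CC}\cap L_J^*)$. As $D$ is real, complex conjugation preserves $D_{\CC}$ while interchanging $L_J$ and $L_J^*$, so the two summands are complex conjugate and hence of equal rank. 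Because $\frac{1+iJ}{2}$ is the projection of $E_{\CC}$ onto $L_J^*$ along $L_J$, it acts as the identity on $D_{\CC}\cap L_J^*$ and annihilates $D_{\CC}\cap L_J$; therefore $L=\frac{1+iJ}{2}D=D_{\CC}\cap L_J^*$ is a genuine constant-rank complex subbundle of $L_J^*$ of complex rank $\tfrac12\operatorname{rank}_{\CC}L_J^*$.

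Next I would establish involutivity. Both $D_{\CC}$ (by the involutivity of the Dirac structure $D$) and $L_J^*$ (by the integrability of $J$) are closed under the Dorfman bracket. Hence for $\xi,\eta\in\Gamma(L)$ the bracket $\xi\circ\eta$ lies simultaneously in $\Gamma(D_{\CC})$ and in $\Gamma(L_J^*)$, so $\xi\circ\eta\in\Gamma(D_{\CC}\cap L_J^*)=\Gamma(L)$. Since $L$ is isotropic, this restricted Dorfman bracket is skew-symmetric, and $L$ is a Lie subalgebroid of $L_J^*$.

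For the Lagrangian condition I would argue as follows. Given $\xi,\eta\in\Gamma(L)\subset\Gamma(L_J^*)$, Lemma~\ref{screwcap} yields $\Omega^{\sharp}\xi=I\xi$, so $\Omega(\xi,\eta)=\langle I\xi,\eta\rangle$. Since $D$ is $I$-stable we have $I\xi\in\Gamma(D_{\CC})$, and since $D$ is isotropic the pairing $\langle I\xi,\eta\rangle$ vanishes; thus $L$ is $\Omega$-isotropic. By Remark~\ref{Oginvert} the endomorphism $\Omega^{\sharp}\colon L_J^*\to L_J$ is invertible, so $\Omega$ restricts to a nondegenerate skew-symmetric form on $L_J^*$; consequently any isotropic subbundle of complex rank $\tfrac12\operatorname{rank}_{\CC}L_J^*$ is maximal isotropic, and the rank count from the first step shows that $L$ is indeed Lagrangian.

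Every computation above is routine; the one point that genuinely requires care is the identification in the first step. It is the $J$-stability of $D$ that guarantees $\frac{1+iJ}{2}D$ has constant rank and in fact equals $D_{\CC}\cap L_J^*$, and that forces this rank to be exactly half that of $L_J^*$. This dimension count is precisely what upgrades the easy isotropy of $L$ to the Lagrangian property, so it is the step on which the whole argument hinges.
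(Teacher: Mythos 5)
The paper gives no proof of this lemma (it is dismissed as ``easy to verify''), so there is no argument to compare yours against; your proposal is a correct and complete verification. The identification $L=\tfrac{1+iJ}{2}D=D_{\CC}\cap L_{J}^{*}$ via the $J$-eigenspace splitting of $D_{\CC}$, the rank count using complex conjugation, the computation $\Omega(\xi,\eta)=\langle I\xi,\eta\rangle=0$ from Lemma~\ref{screwcap} together with the $I$-stability and isotropy of $D$, and the maximality via the nondegeneracy of $\Omega$ on $L_{J}^{*}$ (Remark~\ref{Oginvert}) are exactly the checks needed, and you have correctly isolated the constant-rank identification as the one point requiring care.
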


As consequences of Theorem~\ref{connecontr} and Lemma~\ref{DiracLagragian},
we consider the following two special cases.

Let $M$ be a complex manifold (with complex structure $j$),
and let $\omega$ be a holomorphic symplectic 2-form on $M$.
Set $J=\big(\begin{smallmatrix} -j & 0 \\ 0 & j^{*} \end{smallmatrix}\big)$ (contrast Example~\ref{holomorphicSymplectic})
and $\Omega=\omega+\pi$ with $\pi=-(\cc{\omega})^{-1}$.
As shown in Example~\ref{ExampleLagrangianFoliation}, if $S$ is a complex Lagrangian foliation,
then $D=T_{S}\oplus T_{S}^{\perp}$ is a Dirac structure stable under $I$, $J$, and $K$.
Hence, the Lie subalgebroid $L=\frac{1+\sqrt{-1}J}{2}D=T_{S}^{1,0}\oplus(T_{S}^{\perp})^{0,1}$
of $L_{J}^*=T^{1,0}\oplus (T^{0,1})^*$ is Lagrangian with respect to $\Omega=\omega+\pi$
according to Lemma~\ref{DiracLagragian}. By Theorem~\ref{connecontr},
the restriction of the hypercomplex connection $\nabla$  (defined by Equation~\eqref{connection})
to $L$ is a flat torsion-free connection that satisfies
\[ \nabla_{X}Y=\omega^{-1}(\imath_{X}\partial(\omega(Y)))
\in\Gamma(T^{1,0}_{S}),\quad\forall X,Y\in\Gamma(T^{1,0}_{S}) .\]
Thus, we recover the flat torsion-free connection obtained by Behrend \& Fantechi in~\cite{Kai}.

\begin{corollary}\label{LagrangianFoliation}
Let $S$ be a complex Lagrangian foliation of a holomorphic symplectic manifold $(M;j,\omega)$.
Then
\[ \nabla_{X}Y=\omega^{-1}(\imath_{X}\partial(\omega(Y)))
\in\Gamma(T^{1,0}_{S}),\quad\forall X,Y\in\Gamma(T^{1,0}_{S}) \]
defines a torsion-free flat $T^{1,0}_{S}$-connection on $T^{1,0}_{S}$.
\end{corollary}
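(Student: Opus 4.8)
The plan is to realize the stated connection as a direct instance of Theorem~\ref{connecontr}, so that flatness and torsion-freeness come for free and only the explicit formula requires work. I would work on the standard Courant algebroid $E=TM\oplus T^{*}M$ equipped with the complex structure $J=\big(\begin{smallmatrix} -j & 0 \\ 0 & j^{*} \end{smallmatrix}\big)$, whose eigenbundles are $L_{J}=T^{0,1}\oplus(T^{1,0})^{*}$ and $L_{J}^{*}=T^{1,0}\oplus(T^{0,1})^{*}$, and set $\Omega=\omega+\pi$ with $\pi=-(\cc{\omega})^{-1}$. Because the holomorphic symplectic $2$-form $\omega$ is a section of $\wedge^{2}(T^{1,0})^{*}$ and $\pi$ is a section of $\wedge^{2}T^{0,1}$, the sum $\Omega$ is genuinely a section of $\wedge^{2}L_{J}$; this is precisely the reason for flipping the sign of $J$ relative to Example~\ref{holomorphicSymplectic}. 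The first step is to verify that $\Omega$ is holomorphic symplectic relative to $J$: the nondegeneracy $\Omega\diese\cc{\Omega}\diese+\cc{\Omega}\diese\Omega\diese=-\id$ is equivalent to $\pi=-(\cc{\omega})^{-1}$, and the closedness $d_{L_{J}^{*}}\Omega=0$ follows from $d\omega=0$ together with $\pi$ being the inverse Poisson bivector. This is the content of Example~\ref{holomorphicSymplectic} and Proposition~\ref{extendedsymplectic}, once one accounts for the sign contrast in $J$.

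Next I would produce the Lagrangian Lie subalgebroid. Given a complex Lagrangian foliation $S$, Example~\ref{ExampleLagrangianFoliation} shows that $D=T_{S}\oplus T_{S}^{\perp}$ is a Dirac subbundle of $E$ stable under $I$, $J$, and $K$, and Lemma~\ref{DiracLagragian} then identifies $L=\tfrac{1+\sqrt{-1}J}{2}D=T_{S}^{1,0}\oplus(T_{S}^{\perp})^{0,1}$ as a Lie subalgebroid of $L_{J}^{*}$ that is Lagrangian with respect to $\Omega$. At this point Theorem~\ref{connecontr} applies verbatim and yields that the restriction $\nabla|_{L}$ of the hypercomplex connection of Equation~\eqref{connection} is a torsion-free flat $L$-connection on $L$. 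Hence flatness and absence of torsion are automatic, and everything reduces to computing $\nabla|_{L}$ on the distinguished summand $T_{S}^{1,0}$ and checking it closes up there.

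For the explicit formula I would start from Equation~\eqref{nabla-xieta}, namely $\Omega\diese(\nabla_{X}Y)=\imath_{X}\mathcal{L}_{Y}\Omega$, and specialize to $X,Y\in\Gamma(T_{S}^{1,0})\subset\Gamma(L)$. Since $\Omega$ is closed, Cartan's formula $\mathcal{L}_{Y}=\imath_{Y}d_{L_{J}^{*}}+d_{L_{J}^{*}}\imath_{Y}$ gives $\mathcal{L}_{Y}\Omega=d_{L_{J}^{*}}(\imath_{Y}\Omega)$; as $\pi$ contributes no $(T^{1,0})^{*}$-slot, $\imath_{Y}\Omega=\imath_{Y}\omega=\omega(Y)\in\Gamma((T^{1,0})^{*})$, and since $d_{L_{J}^{*}}$ acts on $(1,0)$-forms as $\partial$ one obtains $\imath_{X}\mathcal{L}_{Y}\Omega=\imath_{X}\partial(\omega(Y))\in\Gamma((T^{1,0})^{*})$. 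Writing $\nabla_{X}Y=A+B$ with $A\in\Gamma(T_{S}^{1,0})$ and $B\in\Gamma((T_{S}^{\perp})^{0,1})$ and applying $\Omega\diese$, the identities $\Omega\diese A=\omega(A)\in(T^{1,0})^{*}$ and $\Omega\diese B=\pi\diese B\in T^{0,1}$ decouple the two summands of $L_{J}$; comparing types forces $\pi\diese B=0$, hence $B=0$ by nondegeneracy of $\pi$, and $\omega(A)=\imath_{X}\partial(\omega(Y))$, that is $\nabla_{X}Y=\omega^{-1}(\imath_{X}\partial(\omega(Y)))\in\Gamma(T_{S}^{1,0})$.

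The main obstacle is exactly this last type-bookkeeping. One must track carefully the decompositions of $L_{J}$ and $L_{J}^{*}$ into their $(1,0)$- and $(0,1)$-pieces, the restrictions of the sharp maps $\Omega\diese$ and $\cc{\Omega}\diese$ to each summand, and above all the fact that $d_{L_{J}^{*}}$ sends a $(1,0)$-form to a pure $(2,0)$-form (i.e.\ behaves as $\partial$, with no $T^{0,1}\wedge(T^{1,0})^{*}$-component). It is precisely this purity that guarantees the a priori possible $(T_{S}^{\perp})^{0,1}$-component of $\nabla_{X}Y$ cancels, so that the connection genuinely closes up on $T_{S}^{1,0}$ and agrees with the flat torsion-free connection of Behrend \& Fantechi in~\cite{Kai}.
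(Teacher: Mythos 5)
Your argument is correct and follows essentially the same route as the paper: both realize the stated connection as the restriction of the hypercomplex connection to the Lagrangian Lie subalgebroid $L=T_{S}^{1,0}\oplus(T_{S}^{\perp})^{0,1}$ coming from the Dirac structure $T_{S}\oplus T_{S}^{\perp}$ via Example~\ref{ExampleLagrangianFoliation} and Lemma~\ref{DiracLagragian}, with flatness and torsion-freeness supplied by Theorem~\ref{connecontr}. The only difference is that you spell out the type-bookkeeping (via Equation~\eqref{nabla-xieta} and the purity of $d_{L_{J}^{*}}$ on $(1,0)$-forms) that yields the explicit formula $\nabla_{X}Y=\omega^{-1}(\imath_{X}\partial(\omega(Y)))$, which the paper merely asserts.
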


Now we consider another special case. As in Example~\ref{exahypercomplexSymp},
consider a complex manifold $M$ (with complex structure $j$),
a holomorphic symplectic structure $\Omega=\theta\in\OO^{0,1}(T^{1,0})$
on $T\oplus T^*$ with respect to $J=\big(\begin{smallmatrix} j & 0 \\ 0 & -j^{*} \end{smallmatrix}\big)$,
and the associated hypercomplex triple $(i,j,k)$ on $X$.
As shown in Example~\ref{ExampleHypercomplexFoliation}, if $S$ is a hypercomplex foliation,
then $D=T_{S}\oplus T_{S}^{\perp}$ is a Dirac structure stable under $I$, $J$, and $K$.
Hence, the Lie subalgebroid $L=\frac{1+\sqrt{-1}J}{2}D=T_{S}^{0,1}\oplus (T_{S}^{\perp})^{1,0}$
of $L_{J}^*=T^{0,1}\oplus (T^{1,0})^*$ is Lagrangian with respect to $\Omega=\theta$ according to Lemma~\ref{DiracLagragian}. By Theorem~\ref{connecontr}, the restriction of the hypercomplex connection $\nabla$ (defined by Equation~\eqref{connection}) to $L$ is a flat torsion-free connection that satisfies
\begin{equation}\label{theta-conjugation}
\nabla_{\cc{X}}\cc{Y}=-\bar{\theta}(\imath_{\cc{X}}\bar{\partial}(\theta(\cc{Y})))
\in\Gamma(T^{0,1}_{S}),\quad\forall \cc{X},\cc{Y}\in\Gamma(T^{0,1}_{S}) .
\end{equation}
If we consider the conjugation of  Equation \eqref{theta-conjugation}, then we get the following corollary.
\begin{corollary}\label{hypercomplexfoliation}
Let $S$ be a hypercomplex foliation on a hypercomplex manifold $(M;i,j,k)$,
and let  $\theta\in\OO^{0,1}(T^{1,0})$ (relatively to $j$)  be defined by $\theta\diese=\frac{1}{2}(i+\sqrt{-1}k)$. Then
 \[ \nabla_{X}Y=-\theta(\imath_{X}\partial(\bar{\theta}(Y)))
\in\Gamma(T^{1,0}_{S}),\quad\forall X,Y\in\Gamma(T^{1,0}_{S}) \]
defines a torsion-free flat $T^{1,0}_{S}$-connection on $T^{1,0}_{S}$.
\end{corollary}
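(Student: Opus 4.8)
The plan is to deduce the corollary directly from Theorem~\ref{connecontr} by complex conjugation, exploiting the fact that the hypercomplex connection $\nabla$ of Equation~\eqref{connection} is \emph{real}. Indeed, since $I$, $J$, $K$, the anchor $\rho$, and the Dorfman bracket $\circ$ are all real, the $\CC$-linear extension of $\nabla$ to $\Gamma(E_{\CC})$ satisfies $\cc{\nabla_{U}V}=\nabla_{\cc U}\cc V$ for all $U,V\in\Gamma(E_{\CC})$ (the same reality property already invoked in the proof of Proposition~\ref{connection-eqv}). Because $S$ is a $j$-stable foliation, $T^{1,0}_{S}$ and $T^{0,1}_{S}$ are complex conjugate, involutive subbundles, so conjugation is a real-linear bundle isomorphism $T^{0,1}_{S}\to T^{1,0}_{S}$ that intertwines the Lie brackets.

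First I would record what the preceding discussion has already established. As explained there, $L=\tfrac{1+\sqrt{-1}J}{2}D=T^{0,1}_{S}\oplus(T^{\perp}_{S})^{1,0}$ is a Lagrangian Lie subalgebroid of $L_{J}^{*}$ with respect to $\Omega=\theta$ (Lemma~\ref{DiracLagragian}), and Theorem~\ref{connecontr} makes $\nabla|_{L}$ a flat, torsion-free $L$-connection on $L$. Equation~\eqref{theta-conjugation} shows that $\nabla_{\cc X}\cc Y\in\Gamma(T^{0,1}_{S})$ for $\cc X,\cc Y\in\Gamma(T^{0,1}_{S})$; since $T^{0,1}_{S}$ is an involutive subbundle of $L$ preserved by $\nabla|_{L}$, the connection $\nabla$ induces a flat, torsion-free $T^{0,1}_{S}$-connection on $T^{0,1}_{S}$, namely
\[
\nabla_{\cc X}\cc Y=-\bar{\theta}(\imath_{\cc X}\bar{\partial}(\theta(\cc Y))),\qquad\forall\,\cc X,\cc Y\in\Gamma(T^{0,1}_{S}).
\]

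Next I would apply conjugation. For $X,Y\in\Gamma(T^{1,0}_{S})$ we have $\cc X,\cc Y\in\Gamma(T^{0,1}_{S})$, and the reality of $\nabla$ gives $\nabla_{X}Y=\cc{\nabla_{\cc X}\cc Y}$. Conjugation interchanges $\theta$ with $\bar{\theta}$ and the Dolbeault operator $\bar{\partial}$ with $\partial$, it commutes with interior products and with the pointwise (tensorial) bundle maps $\theta\diese$ and $\bar{\theta}\diese$, and in particular $\cc{\theta(\cc Y)}=\bar{\theta}(Y)$ as a $(1,0)$-form. Conjugating the displayed formula therefore yields
\[
\nabla_{X}Y=-\theta(\imath_{X}\partial(\bar{\theta}(Y)))\in\Gamma(T^{1,0}_{S}),\qquad\forall\,X,Y\in\Gamma(T^{1,0}_{S}),
\]
which is the asserted expression.

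Finally, flatness and torsion-freeness transfer for free: conjugation is a real-linear isomorphism $T^{0,1}_{S}\to T^{1,0}_{S}$ satisfying $\cc{[\cc X,\cc Y]}=[X,Y]$ and $\cc{\nabla_{\cc X}\cc Y}=\nabla_{X}Y$, so it carries the flat, torsion-free $T^{0,1}_{S}$-connection obtained above to a flat, torsion-free $T^{1,0}_{S}$-connection on $T^{1,0}_{S}$. I do not expect any genuine obstacle here; the only points requiring care are the bookkeeping ones, namely checking that conjugation sends $\theta(\cc Y)\in\OO^{0,1}$ to $\bar{\theta}(Y)\in\OO^{1,0}$ and that the induced connection indeed takes values in $T^{1,0}_{S}$ rather than in the larger bundle $\cc L$.
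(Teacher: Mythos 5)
Your argument is correct and follows the paper's own route exactly: the preceding discussion plus Lemma~\ref{DiracLagragian} identifies $L=T^{0,1}_{S}\oplus(T^{\perp}_{S})^{1,0}$ as a Lagrangian Lie subalgebroid, Theorem~\ref{connecontr} gives the flat torsion-free connection in the form of Equation~\eqref{theta-conjugation}, and the corollary is obtained by complex conjugation. Your additional remarks on the reality of $\nabla$ and on conjugation intertwining brackets, $\partial$/$\bar{\partial}$, and $\theta$/$\bar{\theta}$ merely make explicit what the paper leaves implicit.
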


\bibliographystyle{amsplain}
\bibliography{hypercomplex-9}

\end{document}